\newtheorem{lemma}{Lemma}[section]
\newtheorem{theorem}[lemma]{Theorem}
\newtheorem{conjecture}[lemma]{Conjecture}
\newtheorem{corollary}[lemma]{Corollary}
\theoremstyle{definition}\newtheorem{definition}[lemma]{Definition}
\theoremstyle{definition}\newtheorem{remark}[lemma]{Remark}
\theoremstyle{definition}
\begin{document}

\title{Hyberbolic Belyi maps and Shabat-Blaschke products}

\author{Kenneth Chung Tak Chiu}
\address{Department of Mathematics,  University of Toronto, Toronto, Canada.}
\email{kennethct.chiu@mail.utoronto.ca}
\thanks{}

\author{Tuen Wai Ng}
\address{Department of Mathematics,  The University of Hong Kong, Pokfulam, Hong Kong.}
\email{ntw@maths.hku.hk}

\subjclass[2010]{Primary 11G32; Secondary 14H42, 14H57, 30J10}

\keywords{Belyi maps, dessins d'enfants, Blaschke products, theta functions}

\date{June 29, 2019}

\dedicatory{}

\begin{abstract}
We first introduce hyperbolic analogues of Belyi maps, Shabat polynomials and Grothendieck's dessins d'enfant. In particular we introduce and study the Shabat-Blaschke products and the size of their hyperbolic dessin d'enfants in the unit disk. We then study a special class of Shabat-Blaschke products, namely the Chebyshev-Blaschke products. Inspired by the work of Ismail and Zhang (2007) on the coefficients of the Ramanujan's entire function, we will give similar arithmetic properties of the coefficients of the Chebyshev-Blaschke products and then use them to prove some Landen-type identities for theta functions.
\end{abstract}

\maketitle

\section{Introduction}\label{introduction}

It is well known that there is a bijective correspondence between the connected compact Riemann surfaces and the nonsingular irreducible complex projective curves \cite[p. 22-24]{Jones}. In 1979, G. V. Belyi proved the following theorem:

\begin{theorem}[Belyi, \cite{Belyi}]
A connected compact Riemann surface $X$ is defined over the field $\overline{\mathbb{Q}}$ of algebraic numbers if and only if there exists a nonconstant holomorphic map $f:X\rightarrow \widehat{\mathbb{C}}$ with at most $3$ critical values in the Riemann sphere $\widehat{\mathbb{C}}$. In such case, $f$ is isomorphic to a branched covering that is defined over $\overline{\mathbb{Q}}$.
\end{theorem}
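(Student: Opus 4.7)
The plan is to prove the two implications by different methods. For the direction ``$f$ exists $\Rightarrow X$ defined over $\overline{\mathbb{Q}}$'' I will combine the finiteness of covers of the thrice-punctured sphere with Weil's descent criterion. For the direction ``$X$ defined over $\overline{\mathbb{Q}} \Rightarrow f$ exists'' I will follow Belyi's elementary reduction of an arbitrary morphism to one with three critical values.

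\medskip

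For the first direction, after a Möbius change of variable (defined over $\overline{\mathbb{Q}}$) I may assume the critical values of $f$ are $\{0,1,\infty\}$. Then $f$ restricts to a finite unramified covering of $\mathbb{P}^1(\mathbb{C})\setminus\{0,1,\infty\}$, and such covers of degree $n$ correspond bijectively to conjugacy classes of index-$n$ subgroups of $\pi_1(\mathbb{P}^1\setminus\{0,1,\infty\})$, a free group of rank two. For each $n$ this gives only finitely many isomorphism classes of pairs $(X,f)$. The absolute Galois group $\mathrm{Gal}(\overline{\mathbb{Q}}/\mathbb{Q})$ permutes this finite set, so $(X,f)$ has a finite Galois orbit. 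Applying Weil's rigidity/descent criterion yields a model of the pair $(X,f)$ defined over a number field, hence over $\overline{\mathbb{Q}}$.

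\medskip

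For the second direction, I start with any nonconstant morphism $f_0:X\to\widehat{\mathbb{C}}$ defined over $\overline{\mathbb{Q}}$ (e.g.\ a coordinate projection) and shrink its critical value set $S\subset\widehat{\mathbb{C}}(\overline{\mathbb{Q}})$ in two stages, composing at each step with polynomials over $\mathbb{Q}$. \emph{(Belyi's polynomial trick.)} If $\alpha\in S\setminus\{\infty\}$ has minimal polynomial $P\in\mathbb{Q}[x]$ of degree $>1$, then $P\circ f_0$ has critical values $P(S)\cup P(\mathrm{Crit}(P))\cup\{\infty\}$, in which all Galois conjugates of $\alpha$ collapse to $0$ and the remaining new critical values have strictly smaller algebraic degree over $\mathbb{Q}$. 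Iterating forces $S\subset\mathbb{P}^1(\mathbb{Q})$. \emph{(Belyi's rational trick.)} Having arranged three rational critical values to be $\{0,1,\infty\}$, pick a fourth $\lambda=m/(m+n)\in(0,1)$ with $\gcd(m,n)=1$ and precompose with
\[
\varphi_{m,n}(x)=\frac{(m+n)^{m+n}}{m^m\,n^n}\,x^m(1-x)^n,
\]
whose finite critical points are exactly $\{0,\lambda,1\}$ with values $\{0,1,0\}$. This eliminates $\lambda$ from $S$ while introducing no new critical values outside $\{0,1,\infty\}$, strictly decreasing the number of rational critical values outside $\{0,1,\infty\}$. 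Finitely many iterations collapse $S$ to $\{0,1,\infty\}$, and the composite $f$ is defined over $\overline{\mathbb{Q}}$ since each stage is.

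\medskip

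The genuine obstacle is the descent step in the first implication: combinatorial finiteness only places $(X,f)$ in a finite $\mathrm{Gal}(\overline{\mathbb{Q}}/\mathbb{Q})$-orbit, and upgrading this to an actual algebraic model over $\overline{\mathbb{Q}}$ requires Weil's rigidity theorem, or equivalently a comparison between the field of moduli and a field of definition. Belyi's direction, by contrast, is algorithmic once the two polynomial tricks are in hand.
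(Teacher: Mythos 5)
The paper does not prove this statement at all: it is Belyi's theorem, quoted from \cite{Belyi} as background, so there is no in-paper argument to compare yours against. Your outline is the standard modern proof (Belyi's two polynomial tricks for one direction; Riemann existence, combinatorial finiteness and descent for the other), and it is correct in overall structure. Two steps need tightening. First, in the descent direction the group that must be seen to permute the finite set of degree-$n$ covers is $\mathrm{Aut}(\mathbb{C}/\mathbb{Q})$ (acting on the coefficients of an algebraic model of $(X,f)$ over $\mathbb{C}$, which exists by Riemann existence/GAGA), not $\mathrm{Gal}(\overline{\mathbb{Q}}/\mathbb{Q})$: a priori $X$ is only a complex curve, and the whole point is to show its field of moduli, hence (via the descent/rigidity argument you correctly flag as the crux) a field of definition, is a number field. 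Second, your induction in the minimal-polynomial trick does not obviously terminate as stated: after composing with the minimal polynomial $P$ of a single $\alpha$, the \emph{other} old critical values $\beta$ become $P(\beta)$, whose degree over $\mathbb{Q}$ need not drop, so ``strictly smaller algebraic degree'' only applies to $P(\mathrm{Crit}(P))$ and you have no decreasing measure. The standard fix is to replace $S$ by its Galois closure and compose with the product of the distinct minimal polynomials of all irrational elements of $S$: if that Galois-stable set has $m$ elements, the composite sends all of them to $0$ and introduces at most $m-1$ new finite critical values (the images of the $\le m-1$ critical points of a degree-$m$ polynomial), so the number of irrational critical values strictly decreases. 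With those repairs, and the observation that any rational $\lambda\notin\{0,1,\infty\}$ can be moved into $(0,1)$ by a M\"obius map over $\mathbb{Q}$ permuting $\{0,1,\infty\}$ before applying $\varphi_{m,n}$, your argument is the accepted proof.
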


A branched covering $X$ of the Riemann sphere $\widehat{\mathbb{C}}$ ramified over at most three points $a, b,c$ has then been called a \textbf{Belyi map}. Inspired by Belyi's theorem, Grothendieck  introduced in 1984 the theory of dessin d'enfant in his \emph{Esquisse d'un programme}  \cite{Grothendieck} in the hope of a better understanding of the absolute Galois group $Gal(\overline{\mathbb{Q}}/\mathbb{Q})$. The \textbf{dessin d'enfant} of a Belyi map has been defined to be the preimage under the Belyi map of the geodesic between $a$ and $b$. \\

Let $F_2=\langle g_1, g_2\rangle$ be the free group of rank $2$ and $S_n$ be the symmetric group acting on $\{1,\dots, n\}$. A \textbf{monodromy representation} is a group homomorphism $\rho: F_2\rightarrow S_n$. We say a monodromy representation  $\rho: F_2\rightarrow S_n$ is \textbf{transitive} if $\rho(F_2)$ acts on $\{1,\dots, n\}$ transitively. Two monodromy representations $\rho_1:F_2\rightarrow S_n$ and $\rho_2:F_2\rightarrow S_n$  are said to be \textbf{equivalent} if there exists a permutation $\iota$ on $\{1,\dots, n\}$ such that $\rho_1(g_i)\circ \iota=\iota\circ\rho_2(g_i)$ for each $i=1,2$. It is easy to check that this indeed defines an equivalence relation on the collection of all monodromy representations, and that if two monodromy representations are equivalent and one of them is transitive, then so is the other. The category of Belyi maps, the category of dessins d'enfant and the category of transitive monodromies are all equivalent to each other.  The detailed explanations can be found in \cite{Girondo} and \cite{Zvonkin}. In particular, one has

\begin{theorem}[{\cite[p. 148-155]{Girondo}}]\label{bijective correspondence Belyi maps onto sphere and monodromy}
There is a bijective correspondence between the equivalence classes of Belyi maps onto the Riemann sphere and the equivalence classes of transitive monodromy representations.
\end{theorem}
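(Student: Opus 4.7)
The plan is to apply the classical Galois correspondence for covering spaces to the thrice-punctured sphere, combine it with a compactification step, and then check naturality with respect to the two equivalence relations.

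First, given a Belyi map $f\colon X\to \widehat{\mathbb{C}}$ with branch values contained in $\{a,b,c\}$, restrict it to obtain an unramified covering $f_0\colon X\setminus f^{-1}(\{a,b,c\})\to \widehat{\mathbb{C}}\setminus\{a,b,c\}$ of some degree $n$. Fix a base point $*\in \widehat{\mathbb{C}}\setminus\{a,b,c\}$ and a bijection $f_0^{-1}(*)\leftrightarrow\{1,\dots,n\}$. Since $\pi_1(\widehat{\mathbb{C}}\setminus\{a,b,c\},*)\cong F_2=\langle g_1,g_2\rangle$, with $g_1,g_2$ represented by small loops around $a$ and $b$, path-lifting defines permutations of the fibre and hence a monodromy representation $\rho\colon F_2\to S_n$. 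Connectedness of $X$ (equivalently of the punctured surface) corresponds to transitivity of $\rho(F_2)$.

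Conversely, given a transitive $\rho\colon F_2\to S_n$, the Galois correspondence produces a connected unramified covering $p\colon Y\to \widehat{\mathbb{C}}\setminus\{a,b,c\}$ of degree $n$ with monodromy $\rho$; pulling back the complex structure makes $Y$ a Riemann surface. To compactify, choose small punctured disc neighbourhoods $D^*_a,D^*_b,D^*_c$ of the three punctures. Each connected component of $p^{-1}(D^*_a)$ is a finite connected unramified cover of $D^*_a$, hence biholomorphic to a punctured disc via a map of the form $z\mapsto z^k$ (where $k$ is the length of the corresponding cycle of $\rho(g_1)$), and can be filled in by a single point. Performing this for each puncture yields a compact Riemann surface $X\supseteq Y$ and, by Riemann's removable singularity theorem, a holomorphic extension $f\colon X\to \widehat{\mathbb{C}}$ of $p$ whose critical values lie in $\{a,b,c\}$, i.e., a Belyi map.

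To finish, one checks that the two constructions descend to mutually inverse bijections on equivalence classes. A biholomorphism $\phi\colon X_1\to X_2$ with $f_2\circ\phi=f_1$ restricts to an isomorphism of unramified covers over the punctured sphere; with respect to the chosen labellings of fibres, this isomorphism is encoded by a permutation $\iota\in S_n$ satisfying $\rho_1(g_i)\circ\iota=\iota\circ\rho_2(g_i)$, which is precisely the equivalence of monodromy representations defined above. Conversely, an equivalence $\iota$ yields an isomorphism of the associated covering spaces, which by the uniqueness of the compactification extends to an isomorphism of the corresponding Belyi maps.

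The main obstacle is the compactification step, which must be shown to be canonical: one needs the structure theorem that a finite connected unramified covering of a punctured disc is biholomorphic to a punctured disc via $z\mapsto z^k$, plus Riemann's removable singularity theorem to holomorphically extend $p$ through each filled-in point. Once this is established, verifying the functorial compatibility of the two equivalence relations is routine but requires careful bookkeeping of base points and fibre labellings.
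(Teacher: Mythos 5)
Your argument is correct, but note that the paper does not prove this statement at all: it is quoted from Girondo--Gonz\'alez-Diez with a page reference, so there is no in-paper proof to compare against. The closest thing is the paper's proof of the hyperbolic analogue (Lemmas \ref{well-definedness}, \ref{injectivity}, \ref{surjectivity} yielding Theorem \ref{hyperbolic Belyi and transitive monodromy}), and your outline follows the same overall strategy --- monodromy via path lifting, a Riemann-existence construction for surjectivity, and the covering-space classification for injectivity --- with two differences worth noting. For the existence direction the paper builds the cover concretely by gluing slitted disks according to $\rho(g_1)$ and $\rho(g_2)$, whereas you invoke the Galois correspondence for covering spaces abstractly; and for the compactification the paper delegates the filling-in of punctures to Forster's extension lemmas (the analogues of Lemmas \ref{branched covering from unbranched covering} and \ref{unbranched covering uniquely determine branched covering}), whereas you carry it out by hand via the $z\mapsto z^k$ structure of finite covers of a punctured disc and the removable singularity theorem. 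Both routes are standard and complete; the explicit gluing makes the relation between cycle structure and ramification data visible (which the paper later exploits in Lemma \ref{number of cycles}), while your abstract route is shorter. The one step you should not wave away as ``routine'' is that the extension of a covering isomorphism across the filled-in points is unique and holomorphic --- this is exactly what the paper's Lemma \ref{unbranched covering uniquely determine branched covering} is for, and it is what guarantees the correspondence is well defined on equivalence classes in both directions.
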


Roughly speaking,  monodromies and dessins d'enfant, which are discrete objects, determine uniquely the Belyi maps which are arithmetic objects. Moreover, Grothendieck introduced the Galois action of $Gal(\overline{\mathbb{Q}}/\mathbb{Q})$ on the Belyi maps, and hence on the dessins d'enfant \cite{Grothendieck}.

\begin{definition}
Let $F_2:=\langle g_1,g_2\rangle$ be the rank $2$ free group. Given a transitive monodromy representation $\rho:F_2\rightarrow S_n$. Denote the numbers of cycles in $\rho(g_1)$ and $\rho(g_2)$ by $c_1$ and $c_2$ respectively. We say that $\rho$ is a \textbf{tree} if $c_1+c_2=n+1$.
\end{definition}

It is easy to check that if $\rho$ and $\rho^\prime$ are equivalent transitive monodromy representations, then $\rho(g_i)$ and $\rho^\prime(g_i)$ have the same number of cycles, for $i=1,2$. Hence if one of $\rho$ and $\rho^\prime$ is a tree, then so is the other. A polynomial with at most two finite critical values is called a \textbf{Shabat polynomial}, which is clearly a Belyi map. The following subcorrespondence was proved by Shabat and Voevodsky \cite{Shabat}:

\begin{theorem}[{\cite[p. 84-85]{Zvonkin}, \cite{Shabat}}]\label{trees and Shabat polynomials}
An equivalence class of transitive monodromy representations is a tree if and only if the corresponding equivalence class of Belyi maps consists of a Shabat polynomial.
\end{theorem}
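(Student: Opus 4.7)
The plan is to apply the Riemann-Hurwitz formula together with the standard dictionary between cycles of monodromy permutations and fiber structure. Given a representative Belyi map $f:X\to\widehat{\mathbb{C}}$ of degree $n$ branched over $\{0,1,\infty\}$, fix a basepoint $p_0$ and let $g_1,g_2$ be small loops around $0$ and $1$; then $g_\infty=(g_1g_2)^{-1}$ is a loop around $\infty$. By the local normal form $z\mapsto z^{e_P}$ near a ramification point, the cycles of $\rho(g_1)$ are in bijection with the points of $f^{-1}(0)$, with cycle lengths equal to the ramification indices, and similarly for $\rho(g_2)$ and $\rho(g_\infty)$. In particular, writing $c_\infty$ for the number of cycles of $\rho(g_1g_2)$, we have $c_1=|f^{-1}(0)|$, $c_2=|f^{-1}(1)|$, $c_\infty=|f^{-1}(\infty)|$, and for each critical value $v$, $\sum_{P\in f^{-1}(v)}(e_P-1)=n-c_v$.

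Plugging into Riemann-Hurwitz gives
\[
2g(X)-2=-2n+(n-c_1)+(n-c_2)+(n-c_\infty),
\]
so
\[
c_1+c_2+c_\infty=n+2-2g(X).
\]
Thus $\rho$ is a tree, i.e.\ $c_1+c_2=n+1$, if and only if $c_\infty=1-2g(X)$. Since $c_\infty\ge1$ and $g(X)\ge0$ are nonnegative integers, this is equivalent to the conjunction $g(X)=0$ and $c_\infty=1$.

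For the forward direction, $g(X)=0$ yields $X\cong\widehat{\mathbb{C}}$, and $c_\infty=1$ says $f$ has a unique preimage of $\infty$; identifying $X$ with $\widehat{\mathbb{C}}$ by a Möbius transformation that sends this preimage to $\infty$ produces a representative of the equivalence class that is a polynomial, with finite critical values in $\{0,1\}$, hence a Shabat polynomial. Conversely, any Shabat polynomial $f$ of degree $n$ satisfies $X=\widehat{\mathbb{C}}$, $g=0$, and $f^{-1}(\infty)=\{\infty\}$, so $c_\infty=1$ and the displayed identity forces $c_1+c_2=n+1$, making $\rho$ a tree; Theorem~\ref{bijective correspondence Belyi maps onto sphere and monodromy} then ensures this is an invariant of the equivalence class. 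The main subtlety is the bookkeeping in the first paragraph, ensuring that the cycle count of a monodromy permutation really equals the number of preimages of the corresponding critical value; once this cycles-equal-fibers dictionary is in place, both directions reduce to the single identity above and the elementary observation that $1-2g\ge1$ forces $g=0$.
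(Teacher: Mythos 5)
Your proof is correct and complete: the Riemann--Hurwitz computation $c_1+c_2+c_\infty=n+2-2g(X)$ together with $c_\infty\geq 1$, $g(X)\geq 0$ cleanly forces $g(X)=0$ and $c_\infty=1$ exactly in the tree case, and the cycles-equal-fibers dictionary you invoke is the standard local-normal-form argument (the spherical analogue of the paper's Lemma~\ref{number of cycles}). Note that the paper does not actually prove this statement --- it is cited from Shabat--Voevodsky and Lando--Zvonkin --- but your argument is essentially the same strategy the authors themselves use to prove the hyperbolic analogue, Theorem~\ref{Shabat-Blaschke products and trees}, where Riemann--Hurwitz plus the cycle count gives $\chi(X)=-n+c_1+c_2$ and the tree condition pins down the topology of $X$.
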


There is also a Galois action of $Gal(\overline{\mathbb{Q}}/\mathbb{Q})$ on the Shabat polynomials, and hence on the trees. It was proved by Lenstra and Schneps \cite{Schneps} that this action is faithful. Following Grothendieck, one hopes that the structures of $Gal(\overline{\mathbb{Q}}/\mathbb{Q})$ can be revealed from the combinatorial properties of the trees or the dessins d'enfants.\\

We will establish a hyperbolic analogue (Theorem \ref{hyperbolic Belyi and transitive monodromy}) of Grothendieck's theory of dessins d'enfant by replacing the Riemann sphere $\widehat{\mathbb{C}}$ with three marked points by the open unit disk $\mathbb{D}$ with two marked points. However, in this analogue the \textbf{hyperbolic Belyi maps} constructed from a given transitive monodromy are not rigid, i.e. they depend on the hyperbolic distance between the two marked points under the Poincar\'{e} metric. Moreover, we will establish a hyperbolic analogue (Theorem \ref{Shabat-Blaschke products and trees}) of Shabat's correspondence. Indeed, the tree monodromies will correspond to finite Blaschke products with at most two critical values in $\mathbb{D}$, and such finite Blaschke products will be called \textbf{Shabat-Blaschke products}. Again, Shabat-Blaschke products constructed from a tree monodromy are not rigid and they depend on the hyperbolic distance between the two critical values in $\mathbb{D}$. We will also introduce and study the size of the hyperbolic dessin d'enfant of a Shabat-Blaschke product in Section \ref{dessin}.\\

It is natural to ask if there is a hyperbolic analogue of Belyi's theorem when one replaces the Riemann sphere $\widehat{\mathbb{C}}$ by the open unit disk $\mathbb{D}$ and $X$ is a noncompact topologically finite Riemann surface. To formulate such a result, there are two problems one needs to address first: i) What should be the algebraic object associated with the noncompact topologically finite Riemann surface $X$? ii) What should be used to replace $\overline{\mathbb{Q}}$? We do not know how to answer the first question, except some speculation given in Section \ref{epilogue}. For the second question, it would be helpful to first study some concrete examples, in particular, the case $X=\mathbb{D}$ and the hyperbolic Belyi maps are Shabat-Blaschke products. It is known that the \textbf{Chebyshev polynomials} are examples of Shabat polynomials and their coefficients are integers. We will prove a hyperbolic analogue of this statement. \textbf{Chebyshev-Blaschke products}, which are hyperbolic analogues of Chebyshev polynomials, were studied by Ng, Tsang and Wang \cite{Tsang}\cite{Tsang1}\cite{WangNg}. The Chebyshev-Blaschke products are examples of Shabat-Blaschke products. We will first recall the definition of Chebyshev-Blaschke products. Then inspired by the work of Ismail and Zhang (\cite{Ismail}) on the coefficients of the Ramanujan's entire function, we will prove that the Chebyshev-Blaschke products are defined over 
$$\mathbb{Z}\left[\sqrt{k},\sqrt{k\circ s_n}, \frac{\omega_1\circ s_n}{\omega_1}\right]\subseteq \overline{\mathbb{Q}(j)},$$
where $n$ is the degree of the Chebyshev-Blaschke product, $s_n$ is the scaling by $n$, $k$ and $\omega_1$ are defined in terms of Jacobi theta functions, and $j$ is the $j$-invariant. This leads us to eventually show that the Chebyshev-Blaschke products are defined over $\mathbb{Z}[[q^{1/4}]]$, the ring of power series in $q^{1/4}$ over $\mathbb{Z}$, where $q=e^{2\pi i\tau}$. Finally,  we also obtain a family of Landen-type identities for theta functions as byproducts, which can degenerate to a family of trigonometric identities.\\

\section{Preliminaries}
Many properties of hyperbolic Belyi maps are topological, so we first recall some well-known results in topology that we are going to use.

\begin{lemma}[Homotopy lifting property, {\cite[p. 60]{Hatcher}}]\label{homotopy lifting property}
Suppose $X$, $Y$ and $Z$ are topological spaces,  $p:X\rightarrow Y$ is a  topological covering, and $f_t: Z\rightarrow Y$, $t\in[0,1]$, is a homotopy. Let $g: Z\rightarrow X$ be a continuous map such that $p\circ g=f_0$. Then there exists a unique homotopy $\widetilde{f}_t:Z\rightarrow X$ such that $\widetilde{f}_0=g$ and $p\circ \widetilde{f}_t=f_t$ for all $t\in [0,1]$.
\end{lemma}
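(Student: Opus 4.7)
The plan is to reformulate the homotopy as a single map $F\colon Z\times[0,1]\to Y$, $F(z,t):=f_t(z)$, and construct a continuous lift $\widetilde F\colon Z\times[0,1]\to X$ with $\widetilde F(\,\cdot\,,0)=g$ and $p\circ\widetilde F=F$. Once this is done, setting $\widetilde f_t:=\widetilde F(\,\cdot\,,t)$ yields the desired homotopy.

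I would first establish uniqueness, as it is needed for the gluing in the existence argument. Given two lifts $\widetilde F_1, \widetilde F_2$ that agree on $Z\times\{0\}$, I would fix $z\in Z$ and look at the set $A_z\subseteq[0,1]$ on which the two restrictions $\widetilde F_i(z,\cdot)$ agree. Because the fibres of $p$ are discrete and $p$ is a local homeomorphism, an evenly covered neighborhood around any common value of the two lifts shows that $A_z$ is both open and closed in $[0,1]$; since $0\in A_z$ and $[0,1]$ is connected, $A_z=[0,1]$. This both gives uniqueness and justifies local gluing later.

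For existence, I would use compactness of $[0,1]$ and local triviality of $p$. Fix $z_0\in Z$. For each $t\in[0,1]$ there is an evenly covered open neighborhood $U_t\subseteq Y$ of $F(z_0,t)$; pulling back to $Z\times[0,1]$ and using the tube lemma on $\{z_0\}\times[0,1]$, I can find an open neighborhood $N\ni z_0$ and a finite partition $0=t_0<t_1<\cdots<t_k=1$ such that $F(N\times[t_{i-1},t_i])\subseteq U_i$ for some evenly covered $U_i$, for each $i=1,\dots,k$. I then build $\widetilde F$ on $N\times[0,1]$ inductively: on $N\times\{0\}$ it equals $g|_N$; having defined $\widetilde F$ continuously on $N\times[0,t_{i-1}]$, I shrink $N$ if necessary so that $\widetilde F(N\times\{t_{i-1}\})$ lies in a single sheet $V_i\subseteq p^{-1}(U_i)$, and extend over $N\times[t_{i-1},t_i]$ by $(p|_{V_i})^{-1}\circ F$. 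Continuity on the overlap $N\times\{t_{i-1}\}$ is automatic from the definition, so the resulting $\widetilde F$ is continuous on $N\times[0,1]$ and lifts $F$ there.

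Finally, for arbitrary $z\in Z$ this yields local lifts on open sets $N_\alpha\times[0,1]$ covering $Z\times[0,1]$. On any overlap $(N_\alpha\cap N_\beta)\times[0,1]$ the two candidate lifts agree on $(N_\alpha\cap N_\beta)\times\{0\}$ (both equal $g$), so by the uniqueness established above they agree everywhere, and the local lifts glue to a globally defined continuous $\widetilde F$. I expect the one genuinely subtle point to be continuity jointly in $(z,t)$: simply lifting each path $t\mapsto F(z,t)$ separately does not a priori give continuity in $z$, which is exactly why the tube-lemma step producing the partition $\{t_i\}$ uniform in a neighborhood $N$ of $z_0$ is essential. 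All other steps are routine applications of evenly covered neighborhoods and connectedness of $[0,1]$.
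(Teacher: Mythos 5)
The paper does not prove this lemma at all: it is quoted verbatim as a known result, with a citation to Hatcher (Proposition 1.30, p.~60), and used as a black box. Your argument is correct and is essentially the standard proof from that very reference --- pass to $F(z,t)=f_t(z)$, prove unique path lifting via a clopen argument on $[0,1]$, build local lifts over tubes $N\times[0,1]$ by a finite partition and sheet-by-sheet extension, and glue using uniqueness --- and you correctly identify joint continuity in $(z,t)$ as the point that forces the tube-lemma step. Nothing to correct.
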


In particular, if $Z$ is a point, then we have the following:

\begin{lemma}[Path lifting property]\label{path lifting property}
Suppose $X$ and $Y$ are topological spaces and $p: X\rightarrow Y$ is a topological covering. Let $\gamma:[0,1]\rightarrow Y$ be a continuous path in $Y$. For any $x\in X$ with $p(x)=\gamma(0)$, there exists a unique continuous path $\widetilde{\gamma}_x:[0,1]\rightarrow X$ such that $\widetilde{\gamma}_x(0)=x$ and $p\circ\widetilde{\gamma}_x=\gamma$.
\end{lemma}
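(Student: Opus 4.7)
The plan is to derive the path lifting property as the special case of the homotopy lifting property (Lemma \ref{homotopy lifting property}) in which the parameter space is a single point, following the cue given by the sentence preceding the statement.

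First I would fix a one-point topological space $Z = \{z_0\}$ and rephrase the data of the path $\gamma$ as a homotopy $f_t: Z \to Y$ by setting $f_t(z_0) := \gamma(t)$; continuity of $f_t$ as a function of $(z_0,t)$ is trivial since $Z$ is a singleton, and reduces precisely to continuity of $\gamma$. Next I would encode the choice of initial lift $x \in X$ as a map $g: Z \to X$ defined by $g(z_0) := x$. The compatibility condition $p \circ g = f_0$ then becomes $p(x) = \gamma(0)$, which is exactly the hypothesis.

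With these identifications in place, Lemma \ref{homotopy lifting property} produces a unique homotopy $\widetilde{f}_t: Z \to X$ satisfying $\widetilde{f}_0 = g$ and $p \circ \widetilde{f}_t = f_t$ for all $t \in [0,1]$. I would then define $\widetilde{\gamma}_x: [0,1] \to X$ by $\widetilde{\gamma}_x(t) := \widetilde{f}_t(z_0)$, and verify the three required properties directly: continuity of $\widetilde{\gamma}_x$ follows from continuity of the homotopy $\widetilde{f}_t$ in $t$; the initial condition reads $\widetilde{\gamma}_x(0) = \widetilde{f}_0(z_0) = g(z_0) = x$; and the covering condition reads $p(\widetilde{\gamma}_x(t)) = p(\widetilde{f}_t(z_0)) = f_t(z_0) = \gamma(t)$.

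For uniqueness, I would run the construction in reverse: any continuous path $\widetilde{\gamma}: [0,1] \to X$ with $\widetilde{\gamma}(0) = x$ and $p \circ \widetilde{\gamma} = \gamma$ gives rise to a homotopy $\widetilde{f}_t(z_0) := \widetilde{\gamma}(t)$ satisfying the hypotheses of Lemma \ref{homotopy lifting property}, so its uniqueness clause forces $\widetilde{f}_t$, and hence $\widetilde{\gamma}$, to coincide with the lift constructed above. There is no genuine obstacle here; the only thing to be careful about is making the singleton-space reformulation cleanly, so that every hypothesis of Lemma \ref{homotopy lifting property} is transparently satisfied and its uniqueness conclusion transfers without loss.
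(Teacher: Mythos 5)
Your proposal is correct and follows exactly the route the paper intends: the paper introduces Lemma \ref{path lifting property} with the remark ``In particular, if $Z$ is a point, then we have the following,'' i.e.\ it derives the path lifting property as the singleton-space special case of the homotopy lifting property (Lemma \ref{homotopy lifting property}). You have simply written out in full the verification the paper leaves implicit, including the uniqueness transfer, and there is no gap.
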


\begin{lemma}\label{path lifting of closed path}
Suppose $X$ and $Y$ are topological spaces and $p: X\rightarrow Y$ is a finite topological covering of degree $n$. Let $\gamma:[0,1]\rightarrow Y$ be a continuous path in $Y$. Let $E:=p^{-1}(\gamma(0))=\{x_1,\dots,x_n\}$ and $F:=p^{-1}(\gamma(1))$. Define $\sigma:E\rightarrow F$ by  $\sigma(x_i)=\widetilde{\gamma}_{x_i}(1)$ for each $i$, where $\widetilde{\gamma}_{x_i}$ is defined in Lemma \ref{path lifting property}. Then $\sigma$ is bijective.
\end{lemma}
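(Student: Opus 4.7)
The plan is to use the path lifting property (Lemma \ref{path lifting property}) applied both to $\gamma$ and to its reverse path $\overline{\gamma}(t):=\gamma(1-t)$. Since $|E|=|F|=n$ (both being fibers of a degree $n$ covering, which is a consequence of the local triviality of $p$), it suffices to establish either injectivity or surjectivity of $\sigma$; in fact the reverse-path argument produces a two-sided inverse in one stroke.

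First I would define the candidate inverse $\tau:F\to E$ as follows: for $y\in F$, let $\widetilde{\overline{\gamma}}_y:[0,1]\rightarrow X$ be the unique lift of $\overline{\gamma}$ starting at $y$, and set $\tau(y):=\widetilde{\overline{\gamma}}_y(1)$. Since $p\circ\widetilde{\overline{\gamma}}_y=\overline{\gamma}$, evaluating at $t=1$ gives $p(\tau(y))=\overline{\gamma}(1)=\gamma(0)$, so $\tau(y)\in E$, making $\tau$ well-defined.

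Next I would verify $\tau\circ\sigma=\mathrm{id}_E$ and $\sigma\circ\tau=\mathrm{id}_F$ using the uniqueness clause of Lemma \ref{path lifting property}. Fix $x_i\in E$ and consider the path $\alpha(t):=\widetilde{\gamma}_{x_i}(1-t)$. Then $\alpha(0)=\widetilde{\gamma}_{x_i}(1)=\sigma(x_i)$ and $p\circ\alpha(t)=\gamma(1-t)=\overline{\gamma}(t)$, so $\alpha$ is a lift of $\overline{\gamma}$ starting at $\sigma(x_i)$. By uniqueness, $\alpha=\widetilde{\overline{\gamma}}_{\sigma(x_i)}$, and therefore $\tau(\sigma(x_i))=\alpha(1)=\widetilde{\gamma}_{x_i}(0)=x_i$. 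The identity $\sigma\circ\tau=\mathrm{id}_F$ is obtained in exactly the same way with the roles of $\gamma$ and $\overline{\gamma}$ swapped.

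This is a very short argument, and there is no real obstacle: the only point that requires a moment of care is checking that reversing a lift produces a lift of the reversed path, which is immediate from the definitions. One could instead argue by cardinality, showing only injectivity (if $\sigma(x_i)=\sigma(x_j)$ then the two reversed lifts $t\mapsto\widetilde{\gamma}_{x_i}(1-t)$ and $t\mapsto\widetilde{\gamma}_{x_j}(1-t)$ agree by uniqueness, giving $x_i=x_j$) and invoking $|E|=|F|=n$; I prefer the explicit inverse because it does not require knowing the fiber cardinality a priori.
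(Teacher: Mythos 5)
Your argument is correct and rests on exactly the same idea as the paper's proof: lift the reversed path $\gamma(1-t)$ and invoke the uniqueness clause of the path lifting property. The paper packages this as injectivity plus the equality $|E|=|F|=n$ (precisely the alternative you sketch in your final paragraph), while you build an explicit two-sided inverse; the difference is only cosmetic.
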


\begin{proof}
Since $E$ and $F$ have the same finite cardinality, it suffices to show that $\sigma$ is injective. Let the reversed path of $\gamma$ be defined by $\gamma^{-1}(t):=\gamma(1-t)$ for all $t\in [0,1]$. For each $i$ and each $t\in [0,1]$, let $\widetilde{\gamma}^{-1}_{x_i}(t):=\widetilde{\gamma}_{x_i}(1-t)$, we have $p\circ\widetilde{\gamma}^{-1}_{x_i}(t)=p(\widetilde{\gamma}_{x_i}(1-t))=\gamma(1-t)=\gamma^{-1}(t)$, so $p\circ\widetilde{\gamma}^{-1}_{x_i}=\gamma^{-1}$ for each $i$. Suppose $x_i, x_j\in E$ and $\sigma(x_i)=\sigma(x_j)$.  Then $\widetilde{\gamma}^{-1}_{x_i}$ and $\widetilde{\gamma}^{-1}_{x_j}$ are liftings of $\gamma^{-1}$, and both start at the point $\widetilde{\gamma}_{x_i}(1)=\widetilde{\gamma}_{x_j}(1)$. By the uniqueness in Lemma \ref{path lifting property}, $\widetilde{\gamma}^{-1}_{x_i}=\widetilde{\gamma}^{-1}_{x_j}$. In particular, $x_i=\widetilde{\gamma}_{x_i}(0)=\widetilde{\gamma}^{-1}_{x_i}(1)=\widetilde{\gamma}^{-1}_{x_j}(1)=\widetilde{\gamma}_{x_j}(0)=x_j$.
\end{proof}

\begin{lemma}\label{liftings of homotopic paths}
Suppose $X$ and $Y$ are topological spaces, $p:X\rightarrow Y$ is a topological covering, and $f_t:[0,1]\rightarrow Y$, $t\in [0,1]$, is a homotopy such that $f_t(0)=f_0(0)$ and $f_t(1)=f_0(1)$ for all $t\in[0,1]$. Suppose $g_0$ and  $g_1$ are  liftings of $f_0$ and $f_1$ respectively and $g_0(0)=g_1(0)$. Then $g_0$ and $g_1$ are homotopic and $g_0(1)=g_1(1)$. 
\end{lemma}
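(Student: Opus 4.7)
The plan is to apply the homotopy lifting property, Lemma \ref{homotopy lifting property}, with $Z=[0,1]$ and initial lift $g\coloneqq g_0$. This yields a homotopy $\widetilde{f}_t:[0,1]\rightarrow X$ such that $\widetilde{f}_0=g_0$ and $p\circ \widetilde{f}_t=f_t$ for every $t\in[0,1]$. The rest of the argument will be to identify $\widetilde{f}_t$ as the desired path homotopy between $g_0$ and $g_1$, and this reduces to two uniqueness arguments based on the path lifting property, Lemma \ref{path lifting property}.

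First, I would show that the lifted homotopy keeps both endpoints fixed. For each $i\in\{0,1\}$, consider the path $\alpha_i:[0,1]\rightarrow X$ defined by $\alpha_i(t)=\widetilde{f}_t(i)$. Its projection is $p\circ\alpha_i(t)=f_t(i)=f_0(i)$, a constant path in $Y$. The constant path at $\widetilde{f}_0(i)=g_0(i)$ in $X$ is another lifting starting at the same point, so Lemma \ref{path lifting property} forces $\widetilde{f}_t(i)=g_0(i)$ for all $t\in[0,1]$.

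Next, I would identify $\widetilde{f}_1$ with $g_1$. Both are liftings of $f_1$, and by the step above, $\widetilde{f}_1(0)=g_0(0)=g_1(0)$, so a second application of the uniqueness clause in Lemma \ref{path lifting property} gives $\widetilde{f}_1=g_1$. Combining with the constant-endpoint statement at $i=1$ yields $g_1(1)=\widetilde{f}_1(1)=g_0(1)$. The family $\widetilde{f}_t$ then provides the required homotopy between $g_0$ and $g_1$ with common endpoints.

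There is no real obstacle here: the proof is just the homotopy lifting property producing a candidate homotopy, followed by the uniqueness clause of the path lifting property applied twice---once to pin down the two endpoints, and once to force $\widetilde{f}_1=g_1$.
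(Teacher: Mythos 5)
Your proposal is correct and follows essentially the same route as the paper: apply the homotopy lifting property with initial lift $g_0$, use the uniqueness clause of the path lifting property on the constant endpoint paths to pin down $\widetilde{f}_t(0)$ and $\widetilde{f}_t(1)$, and then use uniqueness once more to identify $\widetilde{f}_1$ with $g_1$. No substantive differences from the paper's argument.
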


\begin{proof}
By Lemma \ref{homotopy lifting property}, there exists a homotopy $\widetilde{f}_t:[0,1]\rightarrow X$ such that $\widetilde{f}_0=g_0$ and $p\circ\widetilde{f_t}=f_t$ for all $t\in[0,1]$. Since the path $f_t(0)$ is a constant, by the uniqueness in Lemma \ref{path lifting property}, $\widetilde{f}_t(0)=\widetilde{f}_0(0)$ for all $t\in [0,1]$.  Similarly, $\widetilde{f}_t(1)=\widetilde{f}_0(1)$ for all $t\in [0,1]$. Now both $\widetilde{f}_1$ and $g_1$ are liftings of $f_1$, and $\widetilde{f}_1(0)=\widetilde{f}_0(0)=g_0(0)=g_1(0)$. By Lemma \ref{path lifting property}, $g_1=\widetilde{f}_1$, which is homotopic to $\widetilde{f}_0=g_0$. Moreover, $g_0(1)=\widetilde{f}_0(1)=\widetilde{f}_1(1)=g_1(1)$.
\end{proof}

\begin{lemma}[{\cite[p. 22]{Forster}}]\label{pullback of complex structure} Suppose $X$ is a connected Riemann surface, $Y$ is a Hausdorff topological space and $f:Y\rightarrow X$ is a local homeomorphism. Then there is a unique complex structure on $Y$ such that $f$ is holomorphic.
\end{lemma}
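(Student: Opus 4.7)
The plan is to build a complex atlas on $Y$ by pulling back charts from $X$ via $f$, verify that the transition functions are holomorphic, check that $f$ becomes holomorphic in this atlas, and finally argue uniqueness.

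For existence, I would proceed as follows. Fix $y\in Y$. Since $f$ is a local homeomorphism, there is an open neighborhood $V$ of $y$ such that $f|_V$ is a homeomorphism onto the open set $f(V)\subseteq X$. Shrinking $V$ if necessary, I may assume $f(V)\subseteq U$ for some chart $(U,\varphi)$ of the given Riemann surface structure on $X$. Define a chart on $Y$ by
\[
\psi := \varphi\circ (f|_V) : V \longrightarrow \varphi(f(V))\subseteq \mathbb{C}.
\]
Since both $f|_V$ and $\varphi$ are homeomorphisms onto their images, $\psi$ is a homeomorphism onto an open subset of $\mathbb{C}$. Doing this at every point produces an atlas $\mathcal{A}$ on $Y$ (the Hausdorff hypothesis on $Y$ is exactly what is needed to know this atlas defines a complex manifold structure).

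Next I would verify holomorphic compatibility. If $(V_1,\psi_1)$ and $(V_2,\psi_2)$ come from charts $(U_1,\varphi_1)$ and $(U_2,\varphi_2)$ on $X$, then on $\psi_1(V_1\cap V_2)$ the transition map is
\[
\psi_2\circ\psi_1^{-1} \;=\; \varphi_2\circ (f|_{V_2})\circ (f|_{V_1})^{-1}\circ\varphi_1^{-1} \;=\; \varphi_2\circ\varphi_1^{-1},
\]
because on $V_1\cap V_2$ the local inverses of $f$ coincide. This last expression is holomorphic since $(U_1,\varphi_1)$ and $(U_2,\varphi_2)$ are holomorphically compatible charts on $X$. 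To see that $f$ is holomorphic with respect to $\mathcal{A}$, read it in the coordinates $(V,\psi)$ and $(U,\varphi)$: the coordinate representation is $\varphi\circ f\circ\psi^{-1}=\mathrm{id}$ on $\psi(V)$, which is trivially holomorphic.

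For uniqueness, suppose $\mathcal{A}_1$ and $\mathcal{A}_2$ are two complex structures on $Y$ making $f$ holomorphic. Around each $y\in Y$, since $f$ is a local homeomorphism, it is locally injective; an injective holomorphic map between Riemann surfaces has nowhere vanishing derivative and is thus a local biholomorphism, so $f$ is a local biholomorphism with respect to each $\mathcal{A}_i$. Consequently the identity $(Y,\mathcal{A}_1)\to(Y,\mathcal{A}_2)$ can be written locally as $f^{-1}\circ f$, where $f$ is holomorphic as a map out of $(Y,\mathcal{A}_1)$ and $f^{-1}$ is holomorphic into $(Y,\mathcal{A}_2)$; hence it is biholomorphic, forcing $\mathcal{A}_1=\mathcal{A}_2$. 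The only real subtlety in the whole argument is this uniqueness step, where one must invoke the fact that a holomorphic local homeomorphism between Riemann surfaces is automatically a local biholomorphism; the rest is a bookkeeping verification that the pulled-back atlas is well-defined.
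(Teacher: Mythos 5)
Your proof is correct, and it is essentially the standard argument from Forster (the paper itself does not prove this lemma but only cites \cite[p.~22]{Forster}, where the same pull-back-of-charts construction and the same uniqueness argument via local biholomorphy of $f$ appear). The one step worth being explicit about, as you note, is that an injective holomorphic map has nonvanishing derivative, which is what makes the local inverses of $f$ holomorphic and forces the two atlases to be compatible.
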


\begin{lemma}[{\cite[p. 29]{Forster}}]\label{unbranched covering from branched covering}
Suppose $X$ and $Y$ are connected Riemann surfaces, and $f:X\rightarrow Y$ is a nonconstant proper holomorphic mapping. Let $B$ be the set of all critical values of $f$, $X^\prime:=X\setminus f^{-1}(B)$ and $Y^\prime=Y\setminus B$. Then $f:X^\prime\rightarrow Y^\prime$ is an unbranched holomorphic covering.
\end{lemma}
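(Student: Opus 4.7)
The plan is to reduce the claim to three standard ingredients: (i) $f|_{X'}$ is a local biholomorphism; (ii) $f|_{X'}\colon X'\to Y'$ is proper; (iii) a proper local homeomorphism between connected Hausdorff manifolds is a covering map. The first point is immediate from the local normal form of a holomorphic map between Riemann surfaces: near any $x\in X$ there exist charts in which $f$ has the form $z\mapsto z^{e(x)}$, where $e(x)\geq 1$ is the ramification index, and $x$ is a critical point exactly when $e(x)\geq 2$. Since $X'=X\setminus f^{-1}(B)$ contains no critical points of $f$, every $x\in X'$ admits a neighbourhood on which $f$ restricts to a biholomorphism onto an open subset of $Y$, and this open subset meets $Y\setminus B=Y'$, so after shrinking it lies in $Y'$.

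For (ii), I would simply note that $f^{-1}(Y')=X\setminus f^{-1}(B)=X'$ by the definition of $X'$, so for any compact $K\subseteq Y'$ we have $(f|_{X'})^{-1}(K)=f^{-1}(K)$, which is compact because $f$ is proper. Hence $f|_{X'}$ is proper.

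For (iii), I would argue locally around an arbitrary $y\in Y'$. Since $f|_{X'}$ is proper with discrete fibres (by (i)), the fibre $f^{-1}(y)$ is a compact discrete set, hence finite, say $\{x_1,\dots,x_n\}$. Using (i), I pick pairwise disjoint open neighbourhoods $U_1,\dots,U_n$ in $X'$ of $x_1,\dots,x_n$ such that each $f|_{U_i}\colon U_i\to f(U_i)$ is a homeomorphism onto an open subset of $Y'$. Now the crux: I need an open neighbourhood $V$ of $y$ with $f^{-1}(V)\subseteq U_1\cup\cdots\cup U_n$. For this I invoke the fact that a proper map to a locally compact Hausdorff space is closed; applied to $f|_{X'}$ and the closed set $C:=X'\setminus(U_1\cup\cdots\cup U_n)$, which is disjoint from $f^{-1}(y)$, this shows $f(C)$ is closed in $Y'$ and does not contain $y$. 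Then $V:=\bigl(\bigcap_{i=1}^n f(U_i)\bigr)\setminus f(C)$ is an open neighbourhood of $y$ inside each $f(U_i)$, and by construction $f^{-1}(V)=\bigsqcup_{i=1}^n (f|_{U_i})^{-1}(V)$, each component mapping homeomorphically onto $V$. This exhibits $V$ as an evenly covered neighbourhood.

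The main technical obstacle is step (iii), specifically the closedness argument that produces $V$; everything else is routine unwinding of definitions. Once the covering structure is in place, holomorphicity of the local inverses follows from (i), so $f|_{X'}\colon X'\to Y'$ is an unbranched holomorphic covering, as required.
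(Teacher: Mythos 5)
The paper offers no proof of this lemma at all---it is imported verbatim from Forster---so there is nothing to compare against; your argument is precisely the standard one found there: the local normal form $z\mapsto z^{e(x)}$ makes $f$ a local biholomorphism on $X'$, properness survives the restriction because $X'=f^{-1}(Y')$, and a proper local homeomorphism onto a locally compact Hausdorff space is a covering via the closedness trick you use to manufacture the evenly covered neighbourhood $V$. The only steps you leave tacit are routine but worth a line each: $B$ is closed and discrete in $Y$ (it is the image of the closed discrete set of critical points under a proper, hence closed, map with finite fibres), so that $Y'$ is open and is itself a Riemann surface; and $f$ is surjective ($f(X)$ is open by the open mapping theorem and closed by properness, and $Y$ is connected), so the fibre over each $y\in Y'$ is nonempty and your $n$ is at least $1$.
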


\begin{lemma}[{\cite[p. 51]{Forster}}]\label{branched covering from unbranched covering}
Suppose $X$ is a Riemann surface, $A\subset X$ is a closed discrete subset. Let $X^\prime:=X\setminus A$. If $Y^\prime$ is a Riemann surface and $f^\prime: Y^\prime\rightarrow X^\prime$ is an unbranched holomorphic covering, then $f^\prime$ can be extended to a branched covering of $X$, i.e. there exists a Riemann surface $Y$, a nonconstant proper holomorphic mapping $f:Y\rightarrow X$, and a biholomorphism $\phi: Y\setminus f^{-1}(A)\rightarrow Y^\prime$ such that $f|_{Y\setminus f^{-1}(A)}=f^\prime\circ \phi$.
\end{lemma}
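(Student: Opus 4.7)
The plan is to do the construction locally around each point $a\in A$, then glue to the given $Y'$. The key tool is that, by Lemma \ref{unbranched covering from branched covering} run in reverse, the connected finite unbranched coverings of a punctured disk are exactly the power maps $z\mapsto z^n$, so we know exactly how the map must look near each branch point once we fill it in.

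First I would fix, for each $a\in A$, a coordinate chart $\psi_a:U_a\to\mathbb D$ with $\psi_a(a)=0$, chosen small enough that $U_a\cap A=\{a\}$; this is possible since $A$ is closed and discrete. Then $U_a^\ast:=U_a\setminus\{a\}$ is biholomorphic to the punctured disk $\mathbb D^\ast$, whose fundamental group is $\mathbb Z$. The restriction $f'^{-1}(U_a^\ast)\to U_a^\ast$ is an unbranched covering, and I split it into its connected components $\{V_{a,i}\}_{i\in I_a}$. Each component $V_{a,i}$ is a connected covering of $U_a^\ast$ and hence (assuming finite degree, which we need for properness) corresponds to a subgroup $n_{a,i}\mathbb Z\leq\mathbb Z$; the covering theory of surfaces then gives a biholomorphism $\varphi_{a,i}:V_{a,i}\to\mathbb D^\ast$ such that $\psi_a\circ f'\circ\varphi_{a,i}^{-1}(z)=z^{n_{a,i}}$.

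Next I would define the underlying set $Y:=Y'\sqcup\{y_{a,i}:a\in A,\,i\in I_a\}$, declare a neighbourhood basis of $y_{a,i}$ to be $\{V_{a,i}\cup\{y_{a,i}\}\}$ with $\varphi_{a,i}$ extended by $y_{a,i}\mapsto 0$, and define $f:Y\to X$ by $f|_{Y'}=f'$, $f(y_{a,i})=a$. Using Lemma \ref{pullback of complex structure} on the open cover $Y'\cup\bigcup_{a,i}(V_{a,i}\cup\{y_{a,i}\})$, I endow $Y$ with a complex structure; in the local charts above, $f$ reads as $z\mapsto z^{n_{a,i}}$, so $f$ is holomorphic. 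The biholomorphism $\phi:Y\setminus f^{-1}(A)\to Y'$ is the identity, by construction.

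The remaining verifications are that $Y$ is Hausdorff, connected (if we want it connected—otherwise just a Riemann surface) and that $f$ is proper. Hausdorffness is immediate from the local chart structure and Hausdorffness of $Y'$ and $X$; properness follows because, for a compact $K\subset X$ covered by finitely many of the $U_a$ together with a compact subset of $X\setminus A$, each fibre $f^{-1}(x)$ is finite of bounded cardinality (the covering degree), so $f^{-1}(K)$ is a finite union of closed preimages of compact sets. The main obstacle is the subtle finiteness point: to make sense of $n_{a,i}<\infty$ and thus to extend each component to a disk rather than to $\mathbb H$, one needs the covering to have finite degree near $A$, which is part of what "extends to a branched (proper) covering" really encodes—without it, a component could be the universal cover of $U_a^\ast$ and no extension exists. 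In the setting of Lemma \ref{branched covering from unbranched covering} as used later (finite unbranched coverings coming from restricting a finite-sheeted map), this finiteness holds automatically, and the construction above produces the required $(Y,f,\phi)$.
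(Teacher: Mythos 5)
The paper gives no proof of this lemma at all: it is cited from Forster (Theorem 8.4 there), so the only meaningful comparison is with the standard textbook argument, and that is exactly what you have reconstructed --- classify the connected pieces of the covering over a punctured coordinate disk around each $a\in A$ as power maps $z\mapsto z^{n}$, adjoin one point per piece, and use the resulting charts to extend both the complex structure and the map. Your most valuable observation is the finiteness caveat: as printed in the paper the lemma omits the hypothesis that $f'$ be \emph{proper} (equivalently, of finite degree over each component of $X'$), and without it the statement is false --- the universal covering $\mathbb{H}\to\mathbb{D}^*$, $z\mapsto e^{2\pi iz}$, does not extend. Forster's original statement does assume properness, and in the paper's one application (Lemma \ref{surjectivity}) the covering has finite degree $n$, so nothing downstream breaks.

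Two small technical slips in your write-up. First, Lemma \ref{pullback of complex structure} cannot be invoked to put the complex structure on $Y$ near the added points $y_{a,i}$, since $f$ is not a local homeomorphism there; you should simply take the extended charts $\varphi_{a,i}$ together with the atlas of $Y'$ as the atlas of $Y$ and note that the transition maps are holomorphic because each $\varphi_{a,i}$ is already a biholomorphism of $V_{a,i}$ onto $\mathbb{D}^*$. Second, in the properness check, ``each fibre is finite of bounded cardinality'' does not by itself yield properness (the inclusion $\mathbb{D}^*\hookrightarrow\mathbb{D}$ has finite fibres but is not proper); the correct argument writes a compact $K\subset X$ as the union of closed subdisks around the finitely many points of $K\cap A$, over which the preimage is a finite union of closed disks in the charts $\varphi_{a,i}$, together with a compact subset of $X'$, over which you must appeal to the assumed properness of $f'$ itself. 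With those two repairs the proof is complete and is the same as the cited one.
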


\begin{lemma}[{\cite[p. 52]{Forster}}]\label{unbranched covering uniquely determine branched covering}
Suppose $X,Y$ and $Z$ are connected Riemann surfaces, $f:Y\rightarrow X$ and $g: Z\rightarrow X$ are nonconstant proper holomorphic mappings. Let $A\subset X$ be a closed discrete subset. Let $X^\prime:=X\setminus A, Y^\prime:=f^{-1}(X^\prime)$ and $Z^\prime:=g^{-1}(X^\prime)$. If $\phi^\prime: Y^\prime\rightarrow Z^\prime$ is a biholomorphism such that $g\circ\phi^\prime=f|_{Y^\prime}$, then $\phi^\prime$ can be extended to a biholomorphism $\phi: Y\rightarrow Z$ such that $g\circ \phi=f$.\\
\end{lemma}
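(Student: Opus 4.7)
The plan is to extend $\phi'$ one point at a time to each $y_0 \in f^{-1}(A)$. Fix $a \in A$, and choose a neighborhood $V$ of $a$ in $X$ with $V \cap A = \{a\}$. Since $f$ and $g$ are proper with discrete fibers, both $f^{-1}(a)$ and $g^{-1}(a)$ are finite, and after shrinking $V$ I can arrange disjoint-disk decompositions $f^{-1}(V) = \bigsqcup_i U_i$ and $g^{-1}(V) = \bigsqcup_j W_j$, where in suitable local coordinates $f|_{U_i}$ has the normal form $w \mapsto w^{k_i}$ centered at some $y_i \in f^{-1}(a)$, and $g|_{W_j}$ has the form $z \mapsto z^{\ell_j}$ centered at some $z_j \in g^{-1}(a)$. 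This combines the local normal form for nonconstant holomorphic maps with the use of properness to separate the finitely many points of each fiber.

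Next, $\phi'$ restricts to a biholomorphism between $\bigsqcup_i U_i^* := \bigsqcup_i (U_i \setminus \{y_i\})$ and $\bigsqcup_j W_j^* := \bigsqcup_j (W_j \setminus \{z_j\})$. Because each $U_i^*$ is a connected punctured disk, its image under $\phi'$ must lie entirely in a single $W_{j(i)}^*$; applying the same argument to $(\phi')^{-1}$, the assignment $i \mapsto j(i)$ is a bijection and $\phi'(U_i^*) = W_{j(i)}^*$. I then set $\phi(y_i) := z_{j(i)}$. In the local coordinates fixed above, the relation $g \circ \phi' = f$ on $U_i^*$ reads $\phi'(w)^{\ell_{j(i)}} = w^{k_i}$; comparing the covering degrees of $f|_{U_i^*}$ and of $\phi'|_{U_i^*}$ composed with $g|_{W_{j(i)}^*}$ forces $k_i = \ell_{j(i)}$. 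Hence $\phi'(w)^{k_i} = w^{k_i}$, and by connectedness of $U_i^*$ together with discreteness of the $k_i$-th roots of unity, $\phi'(w) = \zeta w$ for some constant $\zeta$. This clearly extends holomorphically by the assignment $\phi(y_i) = z_{j(i)}$; alternatively, Riemann's removable singularity theorem applies since $\phi'$ is bounded near $y_i$.

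Performing this extension at every $y_0 \in f^{-1}(A)$ assembles a global map $\phi: Y \to Z$ that is holomorphic by construction, bijective because the symmetric construction applied to $(\phi')^{-1}$ produces a holomorphic inverse, and hence a biholomorphism. The identity $g \circ \phi = f$ holds on the dense open set $Y'$ by hypothesis and extends to all of $Y$ by continuity. The main technical hurdle is securing the disjoint-disk decompositions of $f^{-1}(V)$ and $g^{-1}(V)$, which requires properness in an essential way; once these are in hand, the component-matching step and the holomorphic extension at each branch point are both straightforward.
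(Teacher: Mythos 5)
The paper does not actually prove this lemma; it is quoted from Forster (Theorem~8.5, p.~52), so there is no in-paper argument to compare against. Your proof is correct and is essentially the standard argument one finds in Forster: properness yields the disjoint-disk decompositions over a small punctured neighbourhood of each $a\in A$, connectedness of the punctured disks matches components bijectively, the degree count forces $k_i=\ell_{j(i)}$, and the extension across each puncture follows from Riemann's removable singularity theorem (the explicit normal form $\phi'(w)=\zeta w$ is a pleasant bonus but not needed). The only point worth making explicit is the final bijectivity claim: the symmetric construction gives a holomorphic $\psi:Z\rightarrow Y$, and $\psi\circ\phi$ and $\phi\circ\psi$ equal the identities because they agree with them on the dense open sets $Y'$ and $Z'$ (density holding since $f^{-1}(A)$ and $g^{-1}(A)$ are closed and discrete), a step you leave implicit.
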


Let $n$ be a positive integer. The \textbf{Hecke congruence subgroup of level $n$} is defined to be
$$\Gamma_0(n)
:=\left\{\left(
\begin{array}{cc}
a & b
\\c & d
\end{array}
\right)\in SL(2,\mathbb{Z})
: c\equiv 0\text{ (mod } n)
\right\}.$$
Let $\mathbb{H}$  be the open upper half plane, and $k$ be a positive even integer.
A function $f:\mathbb{H}\rightarrow \mathbb{C}$ is said to be a \textbf{modular form of weight $k$ and of level $n$},  if all of the following conditions hold:
\begin{enumerate}
\item $f$ is holomorphic;
\item For any 
$\left(
\begin{array}{cc}
a & b
\\c & d
\end{array}
\right)\in \Gamma_0(n) \text{ and }\tau\in\mathbb{H},
$
$$f\left(\frac{a\tau+b}{c\tau+d}\right)=(c\tau+d)^kf(\tau);$$
\item $f$ is holomorphic at all the cusps \cite[p. 16-17]{Diamond}.\\
\end{enumerate}

A function $f:\mathbb{H}\rightarrow \mathbb{C}$ satisfying condition (2) is said to be of \textbf{weight $k$ invariant under $\Gamma_0(n)$}.

\begin{lemma}[{\cite[p. 21, 24]{Diamond}}]\label{isogeny modular forms}
If $f:\mathbb{H}\rightarrow \mathbb{C}$ is a modular form of weight $k$ and of level $n$, then $f(m\tau)$ is a modular form of weight $k$ and of level $mn$.
\end{lemma}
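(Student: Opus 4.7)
The plan is to verify directly that the function $g(\tau) := f(m\tau)$ satisfies the three conditions in the definition of a modular form of weight $k$ and level $mn$. Condition (1), holomorphicity on $\mathbb{H}$, is immediate because $m$ is a positive integer, so $\tau \mapsto m\tau$ is a holomorphic self-map of $\mathbb{H}$ along which one composes the holomorphic function $f$.

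For the transformation law (condition (2)), the key observation is a matrix manipulation. Given $\gamma = \begin{pmatrix} a & b \\ c & d \end{pmatrix} \in \Gamma_0(mn)$, I would introduce the auxiliary matrix $\gamma' := \begin{pmatrix} a & mb \\ c/m & d \end{pmatrix}$. Since $mn \mid c$, the entry $c/m$ is an integer divisible by $n$, so $\gamma'$ has integer entries with lower-left entry in $n\mathbb{Z}$; a direct computation gives $\det \gamma' = ad - bc = 1$. Hence $\gamma' \in \Gamma_0(n)$. A second direct computation shows $m \cdot \gamma\tau = \gamma' \cdot (m\tau)$, so applying the transformation law of $f$ for $\gamma'$ yields
\[
g(\gamma\tau) = f(\gamma' \cdot m\tau) = \bigl((c/m)(m\tau) + d\bigr)^k f(m\tau) = (c\tau + d)^k g(\tau),
\]
as required.

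The main obstacle is condition (3), holomorphicity at all cusps of $\Gamma_0(mn)$. At the cusp $\infty$ it is immediate: the Fourier expansion $f(\tau) = \sum_{\ell \geq 0} a_\ell q^\ell$ with $q = e^{2\pi i \tau}$ directly yields $g(\tau) = \sum_{\ell \geq 0} a_\ell q^{m\ell}$, a non-negative power series in $q$. For an arbitrary cusp represented by $\sigma(\infty)$ with $\sigma = \begin{pmatrix} a & b \\ c & d \end{pmatrix} \in SL(2,\mathbb{Z})$, I would apply the Hermite normal form to the integer matrix $\begin{pmatrix} ma & mb \\ c & d \end{pmatrix}$ of determinant $m$, factoring it as $\beta \begin{pmatrix} a_1 & b_1 \\ 0 & d_1 \end{pmatrix}$ with $\beta \in SL(2,\mathbb{Z})$, positive integers $a_1,d_1$ satisfying $a_1 d_1 = m$, and an integer $b_1$. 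This rewrites $m\sigma\tau = \beta\bigl(\tfrac{a_1\tau + b_1}{d_1}\bigr)$, so combining with the cusp-$\beta(\infty)$ holomorphicity of $f$---which provides a non-negative expansion of the appropriate weight-$k$ transform of $f$ by $\beta$ in some $e^{2\pi i \tau/N}$---should translate, via the explicit upper-triangular action of $\begin{pmatrix} a_1 & b_1 \\ 0 & d_1 \end{pmatrix}$, into a non-negative expansion in $e^{2\pi i \tau/(Nd_1)}$ for the corresponding transform of $g$ by $\sigma$, giving holomorphicity at $\sigma(\infty)$. Condition (2) is a short matrix manipulation, but the cusp bookkeeping is the delicate step: one must confirm that the two substitutions preserve non-negativity of the Fourier exponents and that the cusp width is correctly identified.
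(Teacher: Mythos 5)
The paper does not prove this lemma at all: it is quoted directly from Diamond--Shurman with a page citation, so there is no in-paper argument to compare against. Your direct verification is the standard proof of this fact and it is sound. Condition (1) and your treatment of condition (2) are complete and correct: for $\gamma=\left(\begin{smallmatrix} a & b \\ c & d\end{smallmatrix}\right)\in\Gamma_0(mn)$ the matrix $\gamma'=\left(\begin{smallmatrix} a & mb \\ c/m & d\end{smallmatrix}\right)$ does lie in $\Gamma_0(n)$ and conjugates the scaling correctly. For condition (3), the step you flag as delicate does close as you expect: writing $\left(\begin{smallmatrix} ma & mb \\ c & d\end{smallmatrix}\right)=\beta\left(\begin{smallmatrix} a_1 & b_1 \\ 0 & d_1\end{smallmatrix}\right)$ with $\beta\in SL(2,\mathbb{Z})$ and $a_1d_1=m$, the cocycle identity $j(\beta,w)\,d_1=c\tau+d$ with $w=(a_1\tau+b_1)/d_1$ makes the automorphy factors collapse to the nonzero constant $d_1^{-k}$, so $(g|_k\sigma)(\tau)=d_1^{-k}(f|_k\beta)\bigl((a_1\tau+b_1)/d_1\bigr)$; the substitution sends $e^{2\pi i\ell\tau/N}$ to a root of unity times $e^{2\pi i\ell a_1\tau/(Nd_1)}$, so non-negative exponents stay non-negative, and one does not actually need to identify the exact cusp width for $\Gamma_0(mn)$ --- any expansion with non-negative rational exponents of bounded denominator, together with the periodicity already guaranteed by condition (2), suffices for holomorphy at the cusp. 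With that one computation written out, your proof is complete.
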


A function $f:\mathbb{H}\rightarrow \mathbb{C}\cup \{\infty\}$ is said to be a \textbf{modular function of level $n$} if all of the following conditions hold:
\begin{enumerate}
\item $f$ is meromorphic,
\item $f$ is invariant (i.e. weight $0$ invariant) under $\Gamma_0(n)$,
\item $f$ is meromorphic at all the cusps.\\
\end{enumerate}

For any positive integer, let $j$ to be the $j$-invariant and $j_n(\tau)=j(n\tau)$. We have the following:

\begin{lemma}[{\cite[p. 229]{Cox}}]\label{rational Fourier coefficients}
Let $f:\mathbb{H}\rightarrow \mathbb{C}$ be a modular function of level $n$ whose Fourier expansion at $\infty$ has rational coefficients. Then $f\in\mathbb{Q}(j,j_n)$.
\end{lemma}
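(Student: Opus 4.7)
The plan is to exploit the coefficient-wise action of $\mathrm{Aut}(\mathbb{C}/\mathbb{Q})$ on both an algebraic presentation of $\mathbb{C}(j,j_n)$ and on $\mathbb{C}((q))$, and to show that the $q$-expansion map intertwines the two actions so that rationality of the Fourier coefficients descends to rationality of the coefficients in a suitable basis. The classical input is the identification of $\mathbb{C}(j,j_n)$ with the full field of modular functions of level $n$, together with the fact that the $n$-th modular polynomial $\Phi_n(X,Y) \in \mathbb{Z}[X,Y]$ is irreducible in both $\mathbb{Q}(X)[Y]$ and $\mathbb{C}(X)[Y]$. This forces $f \in \mathbb{C}(j,j_n)$ and $[\mathbb{Q}(j,j_n):\mathbb{Q}(j)] = [\mathbb{C}(j,j_n):\mathbb{C}(j)] = d$ with $d = \deg_Y \Phi_n$, so $f$ has a unique representation $f = \sum_{i=0}^{d-1} c_i(j)\,j_n^i$ with $c_i \in \mathbb{C}(j)$, and the goal becomes forcing $c_i \in \mathbb{Q}(j)$.

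For each $\sigma \in \mathrm{Aut}(\mathbb{C}/\mathbb{Q})$, I would introduce two compatible coefficient-wise actions: on $\mathbb{C}(j,j_n) \cong \mathbb{C}(X)[Y]/(\Phi_n(X,Y))$, well-defined because $\Phi_n$ has integer coefficients, and on $\mathbb{C}((q))$. The $q$-expansion embedding $\iota:\mathbb{C}(j,j_n) \hookrightarrow \mathbb{C}((q))$ is $\sigma$-equivariant, since it is a ring homomorphism whose values on the generators $j$ and $j_n$ are the power series $q^{-1} + \cdots$ and $q^{-n} + \cdots$ with integer Fourier coefficients, hence $\sigma$-fixed. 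Rationality of the Fourier coefficients of $f$ means $\iota(f)$ is $\sigma$-fixed, so equivariance together with the injectivity of $\iota$ (a nonzero modular function has a nonzero $q$-expansion at $\infty$) yields $\sigma(f) = f$ for every $\sigma$. Since $\sigma$ also fixes $j_n$, uniqueness of the representation forces $\sigma(c_i) = c_i$ for each $i$; writing $c_i = p_i(j)/q_i(j)$ in lowest terms with $q_i$ monic, this in turn forces every coefficient of $p_i$ and $q_i$ into $\mathbb{C}^{\mathrm{Aut}(\mathbb{C}/\mathbb{Q})} = \mathbb{Q}$, so $c_i \in \mathbb{Q}(j)$ and $f \in \mathbb{Q}(j,j_n)$.

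The main technical obstacle I expect is verifying that $\Phi_n(j,Y)$ remains irreducible after base change from $\mathbb{Q}(j)$ to $\mathbb{C}(j)$, i.e.\ the geometric irreducibility of the modular curve $X_0(n)$; this is classical but not formal, and is what guarantees that the representation $f = \sum c_i(j) j_n^i$ is the same over $\mathbb{C}(j)$ as over $\mathbb{Q}(j)$. The auxiliary identity $\mathbb{C}^{\mathrm{Aut}(\mathbb{C}/\mathbb{Q})} = \mathbb{Q}$ is a standard application of the axiom of choice, combining the transitivity of $\mathrm{Gal}(\overline{\mathbb{Q}}/\mathbb{Q})$ on conjugates with the existence of automorphisms permuting the elements of a transcendence basis of $\mathbb{C}/\mathbb{Q}$. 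Everything else—the well-definedness of the $\sigma$-action modulo $\Phi_n$, the ring-equivariance of $\iota$, and the descent on each individual $c_i$—is routine bookkeeping.
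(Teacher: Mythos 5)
The paper gives no proof of this lemma at all --- it is imported verbatim from Cox --- so the only meaningful comparison is with the argument in the cited source, and your proof takes a genuinely different route from that one. Your descent argument is correct and complete modulo the inputs you explicitly flag: that the field of modular functions for $\Gamma_0(n)$ is $\mathbb{C}(j,j_n)$ with $\Phi_n(j,Y)$ irreducible over $\mathbb{C}(j)$, and that $\mathbb{C}^{\mathrm{Aut}(\mathbb{C}/\mathbb{Q})}=\mathbb{Q}$. The pivotal equivariance step holds for exactly the reason you give: coefficient-wise $\sigma$ is a ring automorphism of $\mathbb{C}((q))$ fixing the integral expansions $\iota(j)=q^{-1}+744+\cdots$ and $\iota(j_n)=q^{-n}+\cdots$, so $\iota\circ\sigma=\sigma\circ\iota$ on $\mathbb{C}[j,j_n]$ and then on the fraction field because $\sigma$ preserves $\Phi_n$ and hence carries nonzero denominators to nonzero denominators; injectivity of $\iota$ is just the $q$-expansion principle. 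By contrast, Cox's proof avoids the full automorphism group of $\mathbb{C}$ (and with it the axiom of choice): he solves for the $c_i(j)$ in $f=\sum_i c_i(j)\,j_n^i$ by Cramer's rule from the linear system obtained by evaluating along coset representatives $\gamma_k$ of $\Gamma_0(n)$ in $SL_2(\mathbb{Z})$, observes that numerator and denominator are symmetric in the conjugates $f(\gamma_k\tau)$, $j(n\gamma_k\tau)$ and hence are $SL_2(\mathbb{Z})$-invariant with $q^{1/n}$-expansions in $\mathbb{Q}(\zeta_n)((q^{1/n}))$ stable under the \emph{finite} group $\mathrm{Gal}(\mathbb{Q}(\zeta_n)/\mathbb{Q})$ acting compatibly on $\zeta_n$ and $q^{1/n}$, and then invokes the level-one fact that an $SL_2(\mathbb{Z})$-invariant function with rational $q$-expansion lies in $\mathbb{Q}(j)$, which he proves constructively by subtracting polynomials in $j$ to kill the principal part. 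Both proofs need the geometric irreducibility of $\Phi_n$ over $\mathbb{C}(j)$; yours is shorter and more conceptual, Cox's is effective and stays inside a finite cyclotomic extension. Either is a legitimate substitute for the citation.
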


\begin{lemma}[{\cite[p. 210]{Cox}}]\label{modular equations}
For any positive integer $n$, there exists a nonconstant polynomial $\Phi_n\in \mathbb{Q}[X,Y]$ such that $\Phi_n(j,j_n)=0$.\\
\end{lemma}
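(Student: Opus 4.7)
The plan is to construct $\Phi_n$ as the minimal polynomial (or at least an annihilating polynomial) of $j_n$ over $\mathbb{Q}(j)$ by exhibiting a complete set of ``conjugates'' of $j_n$, taking symmetric functions, and invoking Lemma~\ref{rational Fourier coefficients}.

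First I would set up the combinatorial framework. Let $\Delta_n$ denote the set of integer $2\times 2$ matrices of determinant $n$. The group $SL(2,\mathbb{Z})$ acts on $\Delta_n$ by left multiplication, and a standard reduction gives a finite set $R_n$ of orbit representatives consisting of upper-triangular matrices $\alpha=\begin{pmatrix} a & b \\ 0 & d\end{pmatrix}$ with $ad=n$, $a,d>0$, and $0\le b<d$. Note that $\alpha_0=\begin{pmatrix} n & 0 \\ 0 & 1\end{pmatrix}\in R_n$. For each $\alpha\in R_n$, define $f_\alpha(\tau):=j(\alpha\tau)$, where $\alpha$ acts on $\mathbb{H}$ by the usual Möbius transformation (note $\alpha\tau\in\mathbb{H}$ since $\det\alpha=n>0$). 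By $SL(2,\mathbb{Z})$-invariance of $j$, the function $f_\alpha$ depends only on the left $SL(2,\mathbb{Z})$-orbit of $\alpha$, so the finite family $\{f_\alpha\}_{\alpha\in R_n}$ is well defined, and $f_{\alpha_0}=j_n$.

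Next I would form the polynomial
$$P(X,\tau):=\prod_{\alpha\in R_n}\bigl(X-f_\alpha(\tau)\bigr)$$
and check that its coefficients (the elementary symmetric functions of the $f_\alpha$) are $SL(2,\mathbb{Z})$-invariant modular functions at level $1$. The key computation: given $\gamma\in SL(2,\mathbb{Z})$ and $\alpha\in R_n$, the product $\alpha\gamma\in\Delta_n$ can be written uniquely as $\gamma'\alpha'$ with $\gamma'\in SL(2,\mathbb{Z})$ and $\alpha'\in R_n$; hence $f_\alpha(\gamma\tau)=j(\alpha\gamma\tau)=j(\gamma'\alpha'\tau)=f_{\alpha'}(\tau)$, so $\gamma$ merely permutes the family $\{f_\alpha\}$. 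Each $f_\alpha$ is holomorphic on $\mathbb{H}$, so the coefficients of $P(X,\tau)$ are holomorphic on $\mathbb{H}$ as well. At the single cusp $\infty$ each $f_\alpha$ is meromorphic (it has a $q^{1/d}$-expansion coming from that of $j$), so the symmetric functions are meromorphic there.

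The main obstacle is rationality of the Fourier coefficients of the symmetric functions. A priori each $f_\alpha$ lives in $\mathbb{Z}((q^{1/n}))$ after adjoining an $n$-th root of unity $\zeta_n$ through $\alpha$ (since $j(\alpha\tau)$ involves $\zeta_n^b q^{a/d}$). However, the Galois group $\operatorname{Gal}(\mathbb{Q}(\zeta_n)/\mathbb{Q})$ permutes the family $\{f_\alpha\}_{\alpha\in R_n}$ (it changes $b$ modulo $d$ while preserving $a,d$), so the elementary symmetric functions are Galois-fixed and their $q$-expansions lie in $\mathbb{Q}((q^{1/n}))$; being $SL(2,\mathbb{Z})$-invariant, hence $T$-invariant, they actually lie in $\mathbb{Q}((q))$. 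Lemma~\ref{rational Fourier coefficients} (applied with $n=1$, so that $j_1=j$) then gives each symmetric function inside $\mathbb{Q}(j)$. Holomorphy on $\mathbb{H}$ forces no poles except at $\infty$, so each symmetric function is in fact a polynomial in $j$ with rational coefficients. Collecting these polynomials produces $\Phi_n(X,Y)\in\mathbb{Q}[X,Y]$, monic and nonconstant in $X$, with $\Phi_n(f_\alpha(\tau),j(\tau))=0$ for every $\alpha\in R_n$; taking $\alpha=\alpha_0$ yields $\Phi_n(j_n,j)=0$, finishing the proof.
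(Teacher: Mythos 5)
Your argument is correct and is essentially the classical symmetric-function construction of the modular equation that the paper invokes by citing Cox (p.~210): reduce $\Delta_n$ to Hermite-normal-form coset representatives, show $SL(2,\mathbb{Z})$ permutes the $j(\alpha\tau)$, use the $\operatorname{Gal}(\mathbb{Q}(\zeta_n)/\mathbb{Q})$-action plus $T$-invariance to get rational integral $q$-expansions for the symmetric functions, and conclude they are polynomials in $j$. The paper offers no proof of its own beyond this citation, so your proposal matches the intended argument.
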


\section{Hyperbolic Belyi maps}\label{Hyperbolic Belyi maps}

A domain $U\subset\mathbb{C}$ is said to be \textbf{$n$-connected} if $\widehat{\mathbb{C}}\setminus U$ has $n+1$ components. A simply connected domain is a  $0$-connected domain. A \textbf{doubly connected domain} is a $1$-connected domain. A domain is \textbf{finitely connected} if it is $n$-connected for some $n\geq 0$. The open unit disk is denoted by $\mathbb{D}$.\\

Each doubly connected domain $U$ in $\mathbb{C}$ can be mapped conformally onto an annulus $A_r:=\{z: r<|z|<1\}$, for some $r\in [0,1)$, or to $\widehat{\mathbb{C}}\setminus \{0,\infty\}$.  Moreover, two annuli $A_{r_1}$ and $A_{r_2}$ are conformally equivalent if and only if $r_1=r_2$, and none of the annuli is conformally equivalent to $\widehat{\mathbb{C}}\setminus \{0,\infty\}$, see for example \cite[p. 96]{Conway} and \cite[p. 283]{Big rudin}. The modulus of $U$, denoted by $M(U)$, is defined to be $\frac{1}{2\pi}\log (1/r)$ if $0<r<1$, and to be $+\infty$ if $r=0$, when $U$ is conformally equivalent to $A_r$. Also, $M(U)$ is defined to be  $0$ if $U$ is conformally equivalent to $\widehat{\mathbb{C}}\setminus \{0,\infty\}$.
For each $t\in (0,1)$, the \textbf{Gr\"{o}tzsch's ring domain} $\mathfrak{G}_t$ is the doubly connected domain $\mathbb{D}-[0,t]$. The modulus of the Gr\"{o}tzsch's ring domain, $M(\mathfrak{G}_t)$, is a strictly decreasing continuous function in $t$ that maps onto $(0,+\infty)$ \cite[p. 59-62]{Lehto}. Suppose $l$ is a geodesic between two distinct points $a,b\in\mathbb{D}$ equipped with the Poincar\'{e} metric \cite{BM07}. Then $\mathbb{D}-l$ is conformal to a Gr\"{o}tzsch's ring domain. The modulus $M(\mathbb{D}-l)$ is uniquely determined by the hyperbolic length of $l$. Hence,  if $l_1$ and $l_2$ are hyperbolic line segments of different lengths, then $M(\mathbb{D}-l_1)\neq M(\mathbb{D}-l_2)$.\\

 A \textbf{hyperbolic Belyi map} is a tuple $(X,f,a,b)$, where $X$ is a connected Riemann surface, $a,b$ are two distinct points in the standard unit disk $\mathbb{D}$,  and $f:X\rightarrow\mathbb{D}$ is a nonconstant proper holomorphic mapping whose critical values all lie in $\{a,b\}$. The \textbf{modulus} of a hyperbolic Belyi map $(X, f,a,b)$ is defined to be  $M(\mathbb{D}-l)$, where $l$ is the geodesic between $a$ and $b$ under the Poincar\'{e} metric. Two hyperbolic Belyi maps $(X_1,f_1,a_1,b_1)$ and $(X_2,f_2,a_2,b_2)$ are said to be \textbf{equivalent} if there exists $\phi\in Aut(\mathbb{D})$ and biholomorphism $\varphi:X_1\rightarrow X_2$ such that $\phi(a_1)=a_2$, $\phi(b_1)=b_2$, and $f_2\circ\varphi=\phi\circ f_1$. It is easy to check that this indeed defines an equivalence relation on the collection of all hyperbolic Belyi maps.\\

Given a hyperbolic Belyi map $(X,f,a,b)$,  we can associate a transitive monodromy representation to it in the following way:\\

Let $y\in \mathbb{D}\setminus\{a,b\}$. Let $\gamma_1,\gamma_2:[0,1]\rightarrow \mathbb{D}\setminus \{a,b\}$ be some continuous paths  both starting and ending at $y$ such that $\gamma_1$ is homotopic to an anticlockwise small circle around $a$ that separates the two points $a$ and $b$, and $\gamma_2$ is homotopic to an anticlockwise small circle around $b$ that separates the two points $a$ and $b$. Let $n:=\deg f$ and $E:=f^{-1}(y)=\{x_1,\dots, x_n\}$. By Lemma \ref{unbranched covering from branched covering} and the path lifting property (Lemma \ref{path lifting property}), $\gamma_1$ is lifted to $n$ paths $\widetilde{\gamma_1}_{x_1},\dots, \widetilde{\gamma_1}_{x_n}$. Similarly for the path $\gamma_2$. Now define $\sigma_1:E\rightarrow E$ by  $\sigma_1(x_i)=\widetilde{\gamma_1}_{x_i}(1)$ for each $i=1,\dots, n$, and $\sigma_2:E\rightarrow E$ by $\sigma_2(x_i)=\widetilde{\gamma_2}_{x_i}(1)$ for each $i=1,\dots, n$. Then by Lemma \ref{path lifting of closed path},  $\sigma_1$ and $\sigma_2$ are bijections on $E$. Let $\delta: E\rightarrow \{1,\dots, n\}$ be a bijection. We define the group homomorphism $\rho:F_2\rightarrow S_n$ by $g_i\mapsto \delta\circ \sigma_i\circ\delta^{-1}$, $i=1,2$. It is easy to show that this construction of $\rho$ is up to equivalence  independent of the choice of the bijection $\delta$. By Lemma \ref{liftings of homotopic paths}, $\sigma_1,\sigma_2$ and hence $\rho$ are independent of the choices of $\gamma_1$ and $\gamma_2$. Suppose $x$ is another point in $\mathbb{D}\setminus \{a,b\}$, and suppose $\beta_1,\beta_2$ are homotopic to $\gamma_1,\gamma_2$ respectively and they start and end at $x$. Let $D:=f^{-1}(x)$. Define the bijections $\lambda_1,\lambda_2:D\rightarrow D$ from $\beta_1,\beta_2$ in the same way as we define $\sigma_1,\sigma_2$ from $\gamma_1,\gamma_2$. There exists a continuous path $\alpha:[0,1]\rightarrow \mathbb{D}\setminus \{a,b\}$ such that $\alpha(0)=x$ and $\alpha(1)=y$.  Define $\eta:D\rightarrow E$ by sending a point $p$ in $D$ to the endpoint of the lifting of $\alpha$ starting at $p$. By Lemma \ref{path lifting of closed path},  $\eta$ is a bijection. By Lemma \ref{liftings of homotopic paths}, we have $\lambda_i=\eta^{-1}\circ\sigma_i\circ\eta$ for $i=1,2$. Therefore, the construction of $\rho$ is independent of the choice of the base point $y$.\\

Next, we want to prove that $\rho$ is transitive. Suppose $x_i,x_j\in E$. Since $X$ is a connected Riemann surface, $X$ is path connected. Since $X$ is a path connected Riemann surface and $f^{-1}(\{a,b\})$ is a finite set, $X\setminus f^{-1}(\{a,b\})$ is still path connected. So there exists a path $\beta:[0,1]\rightarrow X\setminus f^{-1}(\{a,b\})$ such that $\beta(0)=x_i$ and $\beta(1)=x_j$. Then $f\circ \beta$ is a closed path that starts and ends at $y$. Since the fundamental group $\pi_1(\mathbb{D}\setminus\{a,b\}, y)$ is generated by $\gamma_1$ and $\gamma_2$, we have that $f\circ\beta$ is homotopic (with endpoint $y$ fixed) to $\gamma_{m_1}\cdots\gamma_{m_k}$, where $m_1,\dots, m_k=1,2$. 
Define $$p_1:=\widetilde{\gamma_{m_1}}_{x_i}(1), ~ p_2:=\widetilde{\gamma_{m_2}}_{p_1}(1),~\dots~,~ p_k:=\widetilde{\gamma_{m_k}}_{p_{k-1}}(1). $$
Then 
\begin{eqnarray}\nonumber
f\circ (\widetilde{\gamma_{m_1}}_{x_i}\widetilde{\gamma_{m_2}}_{p_1}\cdots \widetilde{\gamma_{m_k}}_{p_{k-1}})
&=&(f\circ\widetilde{\gamma_{m_1}}_{x_i})(f\circ\widetilde{\gamma_{m_2}}_{p_1})\cdots (f\circ\widetilde{\gamma_{m_k}}_{p_{k-1}})
\\&=&\gamma_{m_1}\gamma_{m_2}\cdots\gamma_{m_k}.\nonumber
\end{eqnarray}
By Lemma \ref{liftings of homotopic paths}, $\widetilde{\gamma_{m_1}}_{x_i}\widetilde{\gamma_{m_2}}_{p_1}\cdots \widetilde{\gamma_{m_k}}_{p_{k-1}}$ ends at $x_j$.
Define $\sigma:=\sigma_{m_k}\circ\dots\circ\sigma_{m_1}$. Then $\sigma(x_i)=p_k=x_j$, so $\rho$ is transitive.

\begin{lemma}\label{well-definedness}
Suppose $(X,f,a,b)$ and $(X^\prime, f^\prime,a^\prime,b^\prime)$ are two equivalent hyperbolic Belyi maps. Then the transitive monodromy representations $\rho$ and $\rho^\prime$ associated respectively to these two hyperbolic Belyi maps are equivalent.
\end{lemma}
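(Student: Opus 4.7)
The plan is to exploit the independence of $\rho$ from the choices made in its construction (base point and loops) by transporting the data on the $(X,f,a,b)$ side to the $(X',f',a',b')$ side via the equivalence $(\phi,\varphi)$. Concretely, let $y \in \mathbb{D}\setminus\{a,b\}$ with loops $\gamma_1,\gamma_2$ around $a,b$, and numbering $\delta:f^{-1}(y)\to\{1,\dots,n\}$ producing $\rho$. Set $y':=\phi(y)$ and $\gamma_i':=\phi\circ\gamma_i$. Since $\phi\in\operatorname{Aut}(\mathbb{D})$ is a biholomorphism, it is an orientation-preserving homeomorphism with $\phi(a)=a'$, $\phi(b)=b'$, so $\gamma_i'$ is a closed path in $\mathbb{D}\setminus\{a',b'\}$ based at $y'$, homotopic to a small anticlockwise circle around $a'$ or $b'$ separating $\{a',b'\}$. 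Thus $y',\gamma_1',\gamma_2'$ are admissible data for the construction on the primed side.

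Next I would show that $\varphi$ restricts to a bijection $\varphi:f^{-1}(y)\to (f')^{-1}(y')$: this is immediate from $f'\circ\varphi=\phi\circ f$, which also implies $\varphi$ sends $f^{-1}(\{a,b\})$ to $(f')^{-1}(\{a',b'\})$ and hence restricts to a biholomorphism of the associated unramified coverings. The key computation is that for any $x\in f^{-1}(y)$, the path $\varphi\circ\widetilde{\gamma_i}_x$ in $X'$ starts at $\varphi(x)$ and satisfies
\[
f'\circ(\varphi\circ\widetilde{\gamma_i}_x)=\phi\circ f\circ\widetilde{\gamma_i}_x=\phi\circ\gamma_i=\gamma_i',
\]
so it is a lifting of $\gamma_i'$ starting at $\varphi(x)$. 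By the uniqueness in Lemma \ref{path lifting property}, $\varphi\circ\widetilde{\gamma_i}_x=\widetilde{\gamma_i'}_{\varphi(x)}$, and evaluating at $t=1$ yields the intertwining identity
\[
\sigma_i'\circ\varphi=\varphi\circ\sigma_i
\]
on the fibers, where $\sigma_i,\sigma_i'$ are the fiber bijections from the two constructions.

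Finally, let $\delta':(f')^{-1}(y')\to\{1,\dots,n\}$ be any numbering used in the construction of $\rho'$, and define the permutation $\iota:=\delta\circ\varphi^{-1}\circ(\delta')^{-1}\in S_n$. Using $\sigma_i\circ\varphi^{-1}=\varphi^{-1}\circ\sigma_i'$ from the previous step, a direct substitution gives
\[
\rho(g_i)\circ\iota=\delta\circ\sigma_i\circ\varphi^{-1}\circ(\delta')^{-1}=\delta\circ\varphi^{-1}\circ\sigma_i'\circ(\delta')^{-1}=\iota\circ\rho'(g_i),
\]
for $i=1,2$, which is exactly the equivalence condition. Since $\rho$ and $\rho'$ were shown earlier to be independent (up to equivalence) of the choice of base point, loops, and numbering, this establishes the lemma.

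The only real subtlety to justify carefully is that $\phi\circ\gamma_i$ is indeed a legitimate choice of generating loop on the primed side, i.e. that $\phi$ preserves orientation and the separation property of the small circle around $a$ going to a small circle around $a'=\phi(a)$ separating $\{a',b'\}$; this is straightforward since $\phi$ is a holomorphic automorphism of $\mathbb{D}$ sending the unordered pair $\{a,b\}$ to $\{a',b'\}$ with $a\mapsto a'$ and $b\mapsto b'$.
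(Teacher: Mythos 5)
Your proposal is correct and follows essentially the same route as the paper: transport the base point and loops by $\phi$, observe that $\varphi$ carries fibers to fibers and liftings to liftings (so that uniqueness of path lifting gives $\sigma_i'\circ\varphi=\varphi\circ\sigma_i$), and conclude. The only difference is cosmetic: you write out the permutation $\iota=\delta\circ\varphi^{-1}\circ(\delta')^{-1}$ explicitly, while the paper stops at the intertwining identity and leaves that last bookkeeping step implicit.
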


\begin{proof}
Let $y, \gamma_1,\gamma_2, E, \sigma_1,\sigma_2$ be as above. Since $(X,f)$ and $(X^\prime, f^\prime)$ are equivalent, there exists a biholomorphism $\varphi:X\rightarrow X^\prime$ and $\phi\in Aut(\mathbb{D})$ such that $\phi(a)=a^\prime$, $\phi(b)=b^\prime$ and  $f^\prime\circ\varphi=\phi\circ f$. Let  $E^\prime:=f^{\prime -1}(\phi(y))$. Since winding numbers are invariant under biholomorphism, the closed paths $\phi\circ \gamma_1$ and $\phi\circ \gamma_2$ are representatives of generators of $\pi_1(\mathbb{D}\setminus \{a^\prime,b^\prime\})$. Let $\sigma_1^\prime, \sigma_2^\prime$ be the bijections on $E^\prime$ obtained respectively  from $\phi\circ \gamma_1$ and $\phi\circ \gamma_2$. It is easy to check that $\varphi(E)= E^\prime$, so $\varphi|_E:E\rightarrow E^\prime$ is a bijection. Let $x_i\in E$. Since $f^\prime\circ \varphi\circ\widetilde{\gamma_1}_{x_i}=\phi\circ\gamma_1$, we know that both $\widetilde{\phi\circ\gamma_1}_{\varphi(x_i)}$ and $\varphi\circ \widetilde{\gamma_1}_{x_i}$ are liftings of $\phi\circ\gamma_1$ by $f^\prime$, and both paths start from $\varphi(x_i)$. By Lemma \ref{path lifting property}, $\widetilde{\phi\circ\gamma_1}_{\varphi(x_i)}(1)=\varphi\circ \widetilde{\gamma_1}_{x_i}(1)$, so $\sigma^\prime_1\circ \varphi|_E(x_i)=\varphi|_E\circ\sigma_1(x_i)$. This is true for each $x_i\in E$, so $\sigma_1^\prime\circ\varphi|_E=\varphi|_E\circ\sigma_1$. Similarly, $\sigma_2^\prime\circ\varphi|_E=\varphi|_E\circ\sigma_2$. 
\end{proof}

We also have the converse of the above lemma:

\begin{lemma}\label{injectivity}
If $(X,f,a,b)$ and $(X^\prime, f^\prime,a^\prime,b^\prime)$ are hyperbolic Belyi maps of the same modulus whose associated transitive monodromy representations $\rho$ and $\rho^\prime$ are equivalent, then $(X,f,a,b)$ and $(X^\prime, f^\prime,a^\prime,b^\prime)$ are equivalent.
\end{lemma}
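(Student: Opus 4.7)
The plan is to reduce the statement to a purely topological assertion about coverings of the twice-punctured disk, upgrade the resulting homeomorphism to a biholomorphism, and extend it across the branch points. Since $(X,f,a,b)$ and $(X',f',a',b')$ have the same modulus, the remarks on Gr\"otzsch's ring domain from Section~\ref{Hyperbolic Belyi maps} give $d_{\mathbb{D}}(a,b)=d_{\mathbb{D}}(a',b')$. Because $Aut(\mathbb{D})$ acts transitively on ordered pairs of points of $\mathbb{D}$ at any fixed hyperbolic distance, we may pick $\phi\in Aut(\mathbb{D})$ with $\phi(a)=a'$ and $\phi(b)=b'$. Replacing $f'$ by $\phi^{-1}\circ f'$ (which only reparametrises the loops generating $\pi_1(\mathbb{D}\setminus\{a,b\},y)$ and hence preserves the equivalence class of $\rho'$), the claim reduces to producing a biholomorphism $\varphi:X\to X'$ with $f'\circ\varphi=f$ under the extra assumption $a=a'$ and $b=b'$.

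Next, set $A:=\{a,b\}$, $Y^\circ:=\mathbb{D}\setminus A$, $X^\circ:=X\setminus f^{-1}(A)$, and $X'^\circ:=X'\setminus f'^{-1}(A)$. By Lemma~\ref{unbranched covering from branched covering}, $f|_{X^\circ}$ and $f'|_{X'^\circ}$ are unbranched holomorphic coverings of $Y^\circ$ of the same degree $n$. Fix a base point $y\in Y^\circ$ and the loops $\gamma_1,\gamma_2$ used in the construction of $\rho,\rho'$; then $\pi_1(Y^\circ,y)\cong F_2$ is freely generated by $[\gamma_1],[\gamma_2]$. The content of Lemma~\ref{liftings of homotopic paths} is precisely that the assignment $[\gamma]\mapsto (x\mapsto \widetilde{\gamma}_x(1))$ descends to the standard monodromy homomorphism $\pi_1(Y^\circ,y)\to \mathrm{Sym}(f^{-1}(y))$. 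Thus, after choosing bijections $\delta:f^{-1}(y)\to\{1,\dots,n\}$ and $\delta':f'^{-1}(y)\to\{1,\dots,n\}$, the permutation $\iota\in S_n$ witnessing the equivalence of $\rho$ and $\rho'$ transports, via $\psi:=\delta'^{-1}\circ\iota\circ\delta$, to a $\pi_1(Y^\circ,y)$-equivariant bijection between the fibres $f^{-1}(y)$ and $f'^{-1}(y)$.

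By the classical equivalence between the category of finite coverings of a locally path-connected, semi-locally simply connected space and the category of finite $\pi_1$-sets, the equivariant bijection $\psi$ extends uniquely to an isomorphism of coverings, i.e.\ a homeomorphism $\varphi^\circ:X^\circ\to X'^\circ$ with $f'|_{X'^\circ}\circ \varphi^\circ=f|_{X^\circ}$. Because $f$ and $f'$ are local biholomorphisms on $X^\circ$ and $X'^\circ$, locally $\varphi^\circ=(f'|_{X'^\circ})^{-1}\circ f|_{X^\circ}$, so $\varphi^\circ$ is holomorphic; being also a homeomorphism, it is a biholomorphism. Lemma~\ref{unbranched covering uniquely determine branched covering}, applied to the closed discrete set $A\subset \mathbb{D}$, then extends $\varphi^\circ$ to a biholomorphism $\varphi:X\to X'$ with $f'\circ\varphi=f$; unwinding the earlier reduction, the pair $(\phi,\varphi)$ is the sought equivalence.

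The main obstacle is the careful translation between the paper's loop-lifting presentation of $\rho$ and $\rho'$ and the intrinsic monodromy action of $\pi_1(Y^\circ,y)$ on the fibres; once one verifies, via Lemma~\ref{liftings of homotopic paths}, that the assignment $[\gamma]\mapsto \sigma_{[\gamma]}$ factors through $\pi_1(Y^\circ,y)$ and realises the standard fundamental-group action on the fibre, the invocation of the covering classification theorem is immediate, and what remains is bookkeeping plus the two Forster extension lemmas already quoted.
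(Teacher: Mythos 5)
Your proposal is correct and follows essentially the same route as the paper: translate the hypothesis into an isomorphism of the restricted unbranched coverings over $\mathbb{D}\setminus\{a,b\}$ via the classification of covering spaces, upgrade the resulting homeomorphism to a biholomorphism using the uniqueness of the pulled-back complex structure, and extend across the branch locus with Lemma~\ref{unbranched covering uniquely determine branched covering}. The only cosmetic difference is that you invoke the covering classification in its $\pi_1$-set (equivariant-fibre) form, whereas the paper compares the images of the fundamental groups and cites Hatcher's Proposition~1.37; these are the same theorem.
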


\begin{proof}
Let $y,E,\gamma_1,\gamma_2,\sigma_1,\sigma_2$ be the intermediate notations previously defined  corresponding to $(X,f,a,b)$. Let $y^\prime, E^\prime, \gamma_1^\prime,\gamma_2^\prime, \sigma_1^\prime, \sigma_2^\prime$ be that corresponding to  $(X^\prime, f^\prime,a^\prime,b^\prime)$. Then there exists a bijection $\varphi: E\rightarrow E^\prime$ such that $\sigma^\prime_i\circ \varphi=\varphi \circ \sigma_i$, for $i=1,2$. Let $x_i\in E$ and $(f|_{X\setminus f^{-1}\{a,b\}})_*$ be the induced group homomorphism of $f|_{X\setminus f^{-1}\{a,b\}}$ pointed at $x_i$. We know that
 $$(f|_{X\setminus f^{-1}\{a,b\}})_*(\pi_1(X\setminus f^{-1}\{a,b\},x_i))=\{\gamma\in\pi_1(\mathbb{D}\setminus\{a,b\},y)|\widetilde{\gamma}_{x_i}(1)=x_i\},$$
which is in turn equal to the preimage under the group homomorphism $\pi_1(\mathbb{D}\setminus \{a,b\},y)\rightarrow Bij(E)$ of the subgroup of bijections on $E$ fixing $x_i$. Since $\sigma^\prime_i\circ \varphi=\varphi \circ \sigma_i$, for $i=1,2$, this group is then isomorphic (via $\gamma_i\mapsto\gamma_i^{\prime}$) to the preimage under the group homomorphism $\pi_1(\mathbb{D}\setminus\{a^\prime,b^\prime\},y^\prime)\rightarrow Bij(E^\prime)$ of the subgroup of bijections on $E^\prime$ fixing $\varphi(x_i)$, which is equal to
$$
\begin{array}{ll}
&\{\gamma\in\pi_1(\mathbb{D}\setminus\{a^\prime,b^\prime\},y^\prime)|\widetilde{\gamma}_{\varphi(x_i)}(1)=\varphi(x_i)\}
\\=&(f^\prime|_{X^\prime \setminus f^{\prime -1}\{a^\prime, b^\prime\}})_*(\pi_1(X^\prime \setminus f^{\prime -1}\{a^\prime,b^\prime\},\varphi(x_i))).
\end{array}
$$
Since the two hyperbolic Belyi maps are of the same modulus, there exists $\phi\in Aut(\mathbb{D})$ such that $\phi(a)=a^\prime$ and $\phi(b)=b^\prime$. By Proposition 1.37 in \cite[p. 67]{Hatcher}, the two coverings 
$$X\setminus f^{-1}\{a,b\}\xrightarrow{f} \mathbb{D}\setminus\{a,b\}$$
and
$$X^\prime\setminus f^{\prime -1}\{a^\prime,b^\prime\}\xrightarrow{f^\prime} \mathbb{D}\setminus\{a^\prime,b^\prime\}\xrightarrow{\phi^{-1}} \mathbb{D}\setminus\{a,b\}$$
are isomorphic as topological coverings. By Lemma \ref{pullback of complex structure}, these two holomorphic coverings are isomorphic. By Lemma \ref{unbranched covering uniquely determine branched covering}, the two hyperbolic Belyi maps are equivalent.
\end{proof}

We also have the following variant of the Riemann existence theorem:
\begin{lemma}\label{surjectivity}
Given a $\lambda\in (0,+\infty)$ and a transitive monodromy representation $\rho$, there exists a hyperbolic Belyi map $(X,f,a,b)$ of modulus $\lambda$ whose associated transitive monodromy representation is equivalent to $\rho$.
\end{lemma}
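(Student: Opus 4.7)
My plan is to realize $(X,f,a,b)$ in four stages: first pin down the marked points so the modulus comes out right, then build the desired unbranched covering of $\mathbb{D}\setminus\{a,b\}$ by classical covering-space theory, promote it to a holomorphic covering, and finally fill in the fibers over $a$ and $b$ via Lemma \ref{branched covering from unbranched covering}.

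For the marked points, since $t\mapsto M(\mathfrak{G}_t)$ is a continuous strictly decreasing bijection $(0,1)\to(0,+\infty)$, there is a unique $t\in(0,1)$ with $M(\mathfrak{G}_t)=\lambda$; taking $a=0$, $b=t$ makes the hyperbolic geodesic $l$ equal to the Euclidean segment $[0,t]$, so $\mathbb{D}\setminus l=\mathfrak{G}_t$ has modulus $\lambda$. Next, fix a basepoint $y\in\mathbb{D}\setminus\{a,b\}$ and generators $\gamma_1,\gamma_2$ as in the paper, which identifies $\pi_1(\mathbb{D}\setminus\{a,b\},y)$ with $F_2=\langle g_1,g_2\rangle$ by $\gamma_i\leftrightarrow g_i$. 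Set $H:=\{w\in F_2:\rho(w)(1)=1\}$; this is a subgroup of index $n$ because $\rho$ is transitive, and the coset action of $F_2$ on $F_2/H$ is, under the bijection $gH\mapsto\rho(g)(1)$, precisely $\rho$. By the classification of coverings, there is a connected topological covering $p:Y'\to\mathbb{D}\setminus\{a,b\}$ of degree $n$ with $p_*(\pi_1(Y',y_0))=H$ for some $y_0\in p^{-1}(y)$. Lemma \ref{pullback of complex structure} then equips $Y'$ with a unique complex structure making $p$ holomorphic.

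Applying Lemma \ref{branched covering from unbranched covering} with $A=\{a,b\}\subset\mathbb{D}$ yields a Riemann surface $X$, a nonconstant proper holomorphic map $f:X\to\mathbb{D}$, and a biholomorphism $\phi:X\setminus f^{-1}(A)\to Y'$ with $p\circ\phi=f|_{X\setminus f^{-1}(A)}$. Since $Y'$ is connected, the dense open subset $X\setminus f^{-1}(A)\cong Y'$ of $X$ is connected, and because $f^{-1}(A)$ is discrete and each of its points is a limit of $X\setminus f^{-1}(A)$, the Riemann surface $X$ itself is connected. Lemma \ref{unbranched covering from branched covering} then forces the critical values of $f$ to lie in $A=\{a,b\}$, so $(X,f,a,b)$ is a hyperbolic Belyi map whose modulus is $\lambda$ by the choice of $a,b$.

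It remains to check that the monodromy $\rho_f$ associated to $(X,f,a,b)$ is equivalent to $\rho$. Transporting $y_0$ through $\phi^{-1}$ gives a point $\widetilde{y}_0\in f^{-1}(y)$, and the standard covering-space dictionary identifies $f^{-1}(y)$ with the coset space $F_2/H$ so that the permutation $\sigma_i$ obtained by lifting $\gamma_i$ through $f$ corresponds to left multiplication by $g_i$ on $F_2/H$; under the further identification $gH\leftrightarrow\rho(g)(1)$ of $F_2/H$ with $\{1,\dots,n\}$, these multiplications become exactly $\rho(g_1),\rho(g_2)$. Choosing the bijection $\delta:f^{-1}(y)\to\{1,\dots,n\}$ to be the composite of these two identifications yields $\delta\circ\sigma_i\circ\delta^{-1}=\rho(g_i)$, establishing the equivalence. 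I expect the main obstacle to be precisely this last step: reconciling the path-lifting definition of $\sigma_i$ used in the paper with the left-multiplication action on cosets supplied by the Galois correspondence requires a careful choice of basepoint and of the direction of the monodromy action (and may introduce an inversion $g_i\mapsto g_i^{-1}$ that is harmless up to the allowed equivalence), but once the convention is pinned down the identification is formal.
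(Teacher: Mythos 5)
Your proof is correct, but it takes a genuinely different route from the paper's. The paper builds the covering by hand: it takes $n$ copies of the unit disk slit along $[0,1)$, with $t$ chosen so that $M(\mathbb{D}\setminus[0,t])=\lambda$, and glues the upper and lower edges of the two portions of the slit according to $\rho(g_1)$ and $\rho(g_2)\circ\rho(g_1)$ respectively --- the cut-and-paste surgery of the Riemann existence theorem, with the gluing recipe engineered so that the path-lifting permutations around $0$ and $t$ come out as $\rho(g_1)$ and $\rho(g_2)$. You instead invoke the Galois classification of coverings, producing the connected degree-$n$ covering of $\mathbb{D}\setminus\{a,b\}$ corresponding to the index-$n$ stabilizer subgroup $H=\{w:\rho(w)(1)=1\}$. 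Both routes then pass through the same two lemmas (Lemma \ref{pullback of complex structure} to make the covering holomorphic and Lemma \ref{branched covering from unbranched covering} to fill in the fibers over $a$ and $b$; your covering has finite degree, hence is proper, as the latter requires). What your approach buys is that transitivity of $\rho$ gives connectedness of the covering for free via the subgroup $H$, with no need to inspect the glued space; what it costs is exactly the bookkeeping you flag at the end, which the explicit gluing makes visible. Your verification that the resulting monodromy really is $\rho$ is in fact more complete than the paper's, which stops at ``$f$ is the desired hyperbolic Belyi map.''

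One correction to your final caveat: an inversion $g_i\mapsto g_i^{-1}$ is \emph{not} harmless under the paper's notion of equivalence of monodromy representations, which is simultaneous conjugation by a single permutation $\iota$ and does not identify $\rho$ with $g_i\mapsto\rho(g_i)^{-1}$. The path-lifting monodromy is naturally a right action on the fiber, so if you identify $p^{-1}(y)$ with the right cosets $H\backslash F_2$ in the standard way, the permutation induced by lifting $\gamma_i$ is $\rho(g_i)^{-1}$ rather than $\rho(g_i)$. The fix is immediate precisely because $F_2$ is free: set $\hat\rho(g_i):=\rho(g_i)^{-1}$ on the generators, which extends to a transitive homomorphism, and take $H$ to be the stabilizer of $1$ under $\hat\rho$; the lifting permutations of the corresponding covering are then exactly $\rho(g_1)$ and $\rho(g_2)$. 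With that adjustment your argument is complete.
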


\begin{proof}
Suppose the codomain of $\rho$ is $S_n$ for some positive integer $n$. Let $\Delta_1,\dots, \Delta_n$ be copies of the open unit disk slitted along $[0,1)$. Let $t\in (0,1)$ such that the modulus of $\mathbb{D}\setminus [0,t]$ is $\lambda$. Assume along the slit, each slitted disk $\Delta_i$ has two disjoint copies of  $(0,t)$ attached, named $\alpha_i^u$ and $\alpha_i^l$, and has two disjoint copies of $(t,1)$ attached, named $\beta_i^u$ and $\beta_i^l$. We construct a connected topological space $Z$ from $\sqcup_i \Delta_i$ by gluing the edges $\alpha_i^l$ and $\alpha_j^u$ together whenever $\rho(g_1)(i)=j$, and gluing the edges $\beta_i^l$ and $\beta_j^u$ together whenever $\rho(g_2)( \rho(g_1)(i))=j$. We define $g:Z\rightarrow \mathbb{D}\setminus\{0,t\}$ to be the canonical projection onto $\mathbb{D}\setminus\{0,t\}$, which is a topological covering. By Lemma \ref{pullback of complex structure}, there exists a complex structure on $Z$ such that $g: Z\rightarrow\mathbb{D}\setminus\{0,t\}$ is a holomorphic covering. By Lemma \ref{branched covering from unbranched covering}, $g$ extends to a branched covering $f:X\rightarrow\mathbb{D}$ whose critical values all lie in the set $\{0,t\}$. Now $f$ is the desired hyperbolic Belyi map.
\end{proof}

By Lemma \ref{well-definedness}, Lemma \ref{injectivity} and Lemma \ref{surjectivity}, we have the following:

\begin{theorem}\label{hyperbolic Belyi and transitive monodromy}
For each $\lambda\in (0,+\infty)$, there is a bijective correspondence between the equivalence classes of hyperbolic Belyi maps of modulus $\lambda$ and the equivalence classes of transitive monodromy representations.
\end{theorem}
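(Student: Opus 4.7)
The plan is to simply assemble the three preceding lemmas into the desired bijection, since essentially all of the technical work has already been done. First I would fix $\lambda \in (0, +\infty)$ and define the candidate map $\Phi_\lambda$ from the set of equivalence classes of hyperbolic Belyi maps of modulus $\lambda$ to the set of equivalence classes of transitive monodromy representations by sending a representative $(X, f, a, b)$ to its associated representation $\rho$, constructed as in the paragraphs immediately preceding Lemma \ref{well-definedness} (choose a base point $y \in \mathbb{D}\setminus\{a,b\}$, loops $\gamma_1, \gamma_2$ around $a$ and $b$, lift them via $f$, and read off the induced permutations on the fiber). The discussion preceding Lemma \ref{well-definedness} already shows that $\rho$ is transitive and is independent (up to equivalence) of the auxiliary choices of $y$, $\gamma_1$, $\gamma_2$, and the labeling bijection $\delta$.

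Next I would invoke Lemma \ref{well-definedness} to conclude that $\Phi_\lambda$ descends to a well-defined map on equivalence classes of hyperbolic Belyi maps, since equivalent hyperbolic Belyi maps (regardless of modulus) yield equivalent monodromy representations. Injectivity of $\Phi_\lambda$ is then precisely the content of Lemma \ref{injectivity}: two hyperbolic Belyi maps of the same modulus $\lambda$ with equivalent associated monodromies must themselves be equivalent, and this is where the modulus hypothesis is used (to produce the automorphism $\phi\in \text{Aut}(\mathbb{D})$ sending $(a,b)$ to $(a',b')$, using the fact recorded earlier that the modulus of $\mathbb{D}-l$ determines, and is determined by, the hyperbolic distance between the endpoints of the geodesic $l$). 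Surjectivity follows immediately from Lemma \ref{surjectivity}, which realizes any given transitive monodromy at any prescribed modulus by an explicit cut-and-paste construction of slitted disks.

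Combining these three statements yields the asserted bijection, so the proof is essentially a one-sentence assembly. There is no real obstacle beyond bookkeeping: the only subtlety worth flagging is the role of the modulus hypothesis, which enters exclusively in Lemma \ref{injectivity} — without it, two hyperbolic Belyi maps with the same monodromy but different hyperbolic distances $d_{\mathbb{D}}(a,b)$ would admit no common $\phi\in\text{Aut}(\mathbb{D})$ matching up the marked points, and the correspondence would fail. Fixing $\lambda$ exactly trivializes this obstruction and makes the three lemmas fit together cleanly.
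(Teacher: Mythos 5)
Your proposal is correct and matches the paper's argument exactly: the paper derives the theorem immediately from Lemma \ref{well-definedness} (well-definedness on equivalence classes), Lemma \ref{injectivity} (injectivity, where the fixed modulus is used), and Lemma \ref{surjectivity} (surjectivity via the slitted-disk construction). Your additional remark on why the modulus hypothesis is needed is accurate and consistent with the paper's discussion.
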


This theorem is the hyperbolic analogue of Theorem \ref{bijective correspondence Belyi maps onto sphere and monodromy}. The main difference between the two versions is that in the hyperbolic analogue, we have the extra parameter $\lambda$ that depends on the hyperbolic distance between the two critical values in $\mathbb{D}$.\\

\section{Difference in Euler characteristics}

A Riemann surface is said to be \textbf{topologically finite} if it is homeomorphic to a closed surface with at most finitely many closed disks and points removed.

\begin{lemma}[Riemann-Hurwitz: topologically finite version, \cite{WangNg}\cite{Wang}]\label{topologically finite, Riemann-Hurwitz} 
Suppose $f:X\rightarrow Y$ is a nonconstant proper holomorphic mapping between Riemann surfaces. If $Y$ is topologically finite and $f$ has finitely many critical values, then $X$ is also topologically finite and the following Riemann-Hurwitz formula holds,
$$\deg R_f= \deg f \cdot \chi(Y)-\chi(X),$$
where $R_f$ is the ramification divisor of $f$, hence $\deg R_f$ is the sum of the order of the critical points of $f$, and $\chi(X)$ and $\chi(Y)$ are Euler characteristic of $X$ and $Y$ respectively.
\end{lemma}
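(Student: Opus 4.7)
The plan is to reduce to the unbranched case and then apply a CW-complex Euler characteristic count. First I would let $B\subset Y$ denote the finite set of critical values of $f$. Since $f$ is proper, each fiber is finite, so $A:=f^{-1}(B)\subset X$ is also finite. By Lemma \ref{unbranched covering from branched covering}, the restriction $f:X\setminus A\to Y\setminus B$ is an unbranched holomorphic covering, and since $f$ is proper of degree $n:=\deg f$, this is a finite covering of degree $n$.

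Next I would establish that $X$ is topologically finite. Because $Y$ is homeomorphic to a closed surface with finitely many closed disks and points removed, removing the finite set $B$ produces another topologically finite surface $Y\setminus B$, which admits a finite CW decomposition. An unbranched finite covering of degree $n$ of such a surface is again topologically finite: pulling back a triangulation of $Y\setminus B$ (using that for each open cell the preimage is a disjoint union of $n$ cells) gives a finite CW structure on $X\setminus A$, and then $X=(X\setminus A)\cup A$ is topologically finite since $A$ is finite. This also yields the multiplicativity of Euler characteristic,
\begin{equation*}
\chi(X\setminus A)=n\cdot \chi(Y\setminus B).
\end{equation*}

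Now I would compare Euler characteristics on both sides. For a topologically finite surface, removing a single point decreases the Euler characteristic by $1$ (e.g.\ from a finite triangulation containing the point as a vertex, deleting the open star of that vertex removes a $0$-cell and contracts the remaining cell structure accordingly). Hence
\begin{equation*}
\chi(Y\setminus B)=\chi(Y)-|B|,\qquad \chi(X\setminus A)=\chi(X)-|A|.
\end{equation*}
Combining these with the multiplicativity formula gives
\begin{equation*}
\chi(X)-|A|=n\bigl(\chi(Y)-|B|\bigr),
\end{equation*}
so
\begin{equation*}
n\,\chi(Y)-\chi(X)=n|B|-|A|=\sum_{b\in B}\bigl(n-|f^{-1}(b)|\bigr).
\end{equation*}
Finally, for each critical value $b\in B$, writing $f^{-1}(b)=\{x_1,\dots,x_k\}$ with ramification indices $e_1,\dots,e_k$, local degree gives $\sum_{i} e_i=n$, so $n-|f^{-1}(b)|=\sum_{i}(e_i-1)$, which is exactly the contribution of the fiber to $\deg R_f$. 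Summing over $b\in B$ yields $\deg R_f=n\,\chi(Y)-\chi(X)$, as claimed.

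The main obstacle I expect is the rigorous handling of Euler characteristics for noncompact (topologically finite) surfaces; in particular, justifying the multiplicativity $\chi(X\setminus A)=n\,\chi(Y\setminus B)$ and the point-removal formula requires either a careful CW/triangulation argument or appeal to compactly-supported cohomology. Everything else — the reduction via Lemma \ref{unbranched covering from branched covering}, the identity $\sum_i e_i=n$, and the bookkeeping — is routine.
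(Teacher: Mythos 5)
The paper does not prove this lemma at all: it is quoted as a known result with citations to \cite{WangNg} and \cite{Wang}, so there is no in-paper argument to compare yours against. Taken on its own, your proof is the standard and correct one: reduce to the unbranched covering $X\setminus A\to Y\setminus B$ via Lemma \ref{unbranched covering from branched covering}, use multiplicativity of $\chi$ under finite coverings, the puncture formula $\chi(S\setminus\{p\})=\chi(S)-1$, and the fiber identity $\sum_i e_i=\deg f$; the bookkeeping at the end is right, including the observation that unramified points contribute $e_i-1=0$ so the sum over fibers of critical values equals $\deg R_f$.

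The one step I would tighten is the assertion that $Y\setminus B$ ``admits a finite CW decomposition'': a noncompact surface is never literally a finite CW complex, so you cannot pull back a finite cell structure verbatim. The fix is exactly the kind of thing you flag at the end: either pass to a compact core $K\subset Y\setminus B$ (a compact subsurface with boundary onto which $Y\setminus B$ deformation retracts), note that $f^{-1}(K)$ is a compact $n$-fold cover onto which $X\setminus A$ deformation retracts (by lifting the retraction, Lemma \ref{homotopy lifting property}), and apply multiplicativity there; or work with compactly supported Euler characteristic, which is additive over the decomposition $Y=(Y\setminus B)\sqcup B$, multiplicative under finite covers, and agrees with $\chi$ for even-dimensional manifolds. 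Either repair also delivers the claim that $X$ is topologically finite (finite-index subgroups of finitely generated groups are finitely generated, and an open surface with finitely generated fundamental group is topologically finite). With that point made precise, the argument is complete.
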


By Lemma \ref{topologically finite, Riemann-Hurwitz}, given any hyperbolic Belyi map $(X,f,a,b)$, we know that $X$ is topologically finite. Moreover, since $\mathbb{D}$ is noncompact, and $f$ is continuous and surjective, we know $X$ is also noncompact, so $X$ is homeomorphic to a closed surface with at least a disk or a point removed.

\begin{lemma}\label{number of cycles}
Let $(X,f,a,b)$ be a hyperbolic Belyi map and $\sigma_1,\sigma_2$ be defined as in Section \ref{Hyperbolic Belyi maps}. Then the number of cycles of $\sigma_1$ equals the cardinality of $f^{-1}(a)$ and the number of cycles of $\sigma_2$ equals the cardinality of $f^{-1}(b)$.
\end{lemma}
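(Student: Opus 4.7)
The plan is to localize the computation of $\sigma_1$ near the critical value $a$ and identify each cycle with a single preimage. The argument then applies verbatim to $\sigma_2$ and $b$.

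First, since $f$ is proper, $f^{-1}(a)$ is a finite set. Using the standard local normal form for a nonconstant holomorphic map between Riemann surfaces (at each $x\in f^{-1}(a)$ there are coordinates in which $f$ looks like $z\mapsto z^{e_x}$, where $e_x$ is the ramification index), I would pick a disk $D_a\subset\mathbb{D}$ centered at $a$ with $\overline{D_a}$ disjoint from $\{b\}$ and small enough that $f^{-1}(\overline{D_a})$ is a disjoint union $\bigsqcup_{x\in f^{-1}(a)} U_x$, where each $U_x$ is a coordinate disk about $x$ on which $f$ has the form $z\mapsto z^{e_x}$. On each $U_x\setminus\{x\}$, the restriction of $f$ is then an unbranched holomorphic covering of $D_a\setminus\{a\}$ of degree $e_x$; that this is a covering globally on $\bigsqcup_x(U_x\setminus\{x\})\to D_a\setminus\{a\}$ also follows from Lemma \ref{unbranched covering from branched covering}.

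Next I would exploit the base-point and loop-independence of the construction of $\sigma_1$ established in Section \ref{Hyperbolic Belyi maps}: pick the base point $y$ to lie in $D_a\setminus\{a\}$, and pick $\gamma_1$ to be the boundary circle of a smaller disk around $a$ (traversed once anticlockwise), which clearly represents the required generator of $\pi_1(\mathbb{D}\setminus\{a,b\},y)$. Then $f^{-1}(y)$ partitions as $\bigsqcup_{x\in f^{-1}(a)}\bigl(f^{-1}(y)\cap U_x\bigr)$, and each block has exactly $e_x$ elements. Lifting $\gamma_1$ through a point of $f^{-1}(y)\cap U_x$ stays inside $U_x$ (by uniqueness of liftings, Lemma \ref{path lifting property}, and connectedness of the covering $U_x\setminus\{x\}\to D_a\setminus\{a\}$), and in the local coordinate where $f$ is $z\mapsto z^{e_x}$ one reads off directly that the loop around $0$ cyclically permutes the $e_x$-th roots of the base point. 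Hence the $e_x$ elements of $f^{-1}(y)\cap U_x$ form a single $\sigma_1$-cycle of length $e_x$.

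Therefore the cycles of $\sigma_1$ are in bijection with the components $U_x$, i.e.\ with $f^{-1}(a)$, which gives the first claim; the second claim is identical with $b$ in place of $a$ and $\gamma_2$ in place of $\gamma_1$. There is no real obstacle here: the only non-trivial inputs are the local normal form $z\mapsto z^{e_x}$ and the freedom to reposition the base point and loops, both of which are already available. As a sanity check, the cycle lengths sum to $\sum_x e_x=\deg f=n$, consistent with $\sigma_1$ being a bijection on an $n$-element set.
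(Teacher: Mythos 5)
Your proof is correct and follows essentially the same route as the paper: both arguments reduce to the local normal form $z\mapsto z^{e_x}$ near each point of $f^{-1}(a)$, so that a small loop around $a$ lifts to one cycle of length $e_x$ per preimage $x$. The only cosmetic difference is that the paper keeps the original base point and conjugates the small circle by a connecting path (then invokes homotopy invariance of liftings), whereas you relocate the base point into the small disk using the base-point independence already established in Section \ref{Hyperbolic Belyi maps}; both are valid.
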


\begin{proof}
By a theorem on the local behavior of a holomorphic mapping \cite[p. 10]{Forster}, we can choose a small circle $\beta$ around $a$ such that it is lifted by $f$ to $|f^{-1}(a)|$ cycles of paths, each cycle goes around a preimage of $a$ and the number of paths the cycle contains is equal to the multiplicity of that preimage. Next, we join the base point $y$ to the circle   $\beta$ by a path $\alpha$ in $\mathbb{D}\setminus\{a,b\}$. Then the closed path $\alpha\beta\alpha^{-1}$ is lifted to cycles of paths having the same property. Since $\alpha\beta\alpha^{-1}$ is homotopic to $\gamma_1$,  by Lemma \ref{liftings of homotopic paths}, the lifting of $\gamma_1$ is again  cycles of paths having the same property. Hence, the number of cycles of $\sigma_1$ equals $|f^{-1}(a)|$. Similarly, the number of cycles of $\sigma_2$ equals $|f^{-1}(b)|$.
\end{proof}

Fix a $\lambda\in (0,+\infty)$. Given a transitive monodromy representation $\rho:F_2:=\langle g_1,g_2\rangle\rightarrow S_n$. Let $(X,f,a,b)$ be the hyperbolic Belyi map  of modulus $\lambda$ associated to $\rho$. Let $(\overline{X},\overline{f},c,d,e)$ be the Belyi map onto the Riemann sphere associated to $\rho$. We have that $\deg f=n=\deg \overline{f}$. By Lemma \ref{number of cycles} and its analogue for Belyi maps onto the Riemann sphere, $|\overline{f}^{-1}(c)|=c_1=|f^{-1}(a)|$, similarly $|\overline{f}^{-1}(d)|=c_2=|f^{-1}(b)|$. By the Riemann-Hurwitz formula in Lemma \ref{topologically finite, Riemann-Hurwitz}, 
\begin{eqnarray}
\chi({\overline{X}})-\chi(X)
&=&\deg \overline{f}\cdot \chi(\overline{\mathbb{C}})-\deg R_{\overline{f}}-\deg f \cdot \chi(\mathbb{D})+\deg R_f\nonumber
\\&=&2\deg{\overline{f}}-\deg R_{\overline{f}}-\deg \overline{f}+\deg R_f\nonumber
\\&=&\deg{\overline{f}}-(3\deg\overline{f}-|\overline{f}^{-1}(c)|-|\overline{f}^{-1}(d)|-|\overline{f}^{-1}(e)|)\nonumber
\\&\quad&+(2\deg\overline{f}-|f^{-1}(a)|-|f^{-1}(b)|)\nonumber
\\&=&|\overline{f}^{-1}(e)|.\nonumber
\end{eqnarray}
Therefore, $\chi(X)$ is $\chi(\overline{X})$ minus the number of cycles in $\rho(g_1)^{-1}\circ \rho(g_2)^{-1}$, as $\gamma_c\gamma_d\gamma_e=1,$ where $\gamma_c,\gamma_d,\gamma_{e}$ are closed continuous paths in $\widehat{\mathbb{C}}$ with the same base point in $\widehat{\mathbb{C}}\setminus\{c,d,e\}$ that goes around $c$, $d$ and $e$ respectively, just like how $\gamma_1,\gamma_2$ goes around $a$ and $b$ in Section \ref{Hyperbolic Belyi maps}.

\begin{remark}
Alternatively, by comparing $X$ and $\overline{X}$ constructed using the cutting and pasting surgery in the Riemann Existence Theorem, one can see that $X$ differs from $\overline{X}$ by missing $c_3$ number of closed disks, where $c_3$ is the number of cycles in $\rho(g_1)^{-1}\circ \rho(g_2)^{-1}$.  Since taking away a disk from a surface  will decrease its Euler characteristic by $1$, we obtain the same formula as before. 
\end{remark}

When $\overline{X}=\widehat{\mathbb{C}}$, $\overline{f}$ is a nonconstant Shabat polynomial, and $e=\infty$, we have $|\overline{f}^{-1}(e)|=1$. Then $\chi(X)=\chi(\overline{X})-|\overline{f}^{-1}(e)|=2-1=1$. By Liouville's theorem, $X$ cannot be biholomorphic to $\mathbb{C}$, so $X$ is biholomorphic to $\mathbb{D}$. Therefore, in the next section we will study the hyperbolic Belyi maps $(X,f,a,b)$ when $X=\mathbb{D}$.\\

\section{Shabat-Blaschke products}

\begin{definition}
A \textbf{Shabat-Blaschke product} is a triple $(g,a,b)$, where $g:\mathbb{D}\rightarrow\mathbb{D}$ is a finite Blaschke product whose critical values all lie in $\{a,b\}$. The \textbf{modulus} of a Shabat-Blaschke product is defined to be $M(\mathbb{D}- l)$, where $l$ is the geodesic between $a$ and $b$. We say that two Shabat-Blaschke products $(g_1,a_1,b_1)$ and $(g_2,a_2,b_2)$ are \textbf{equivalent} if there exists $\phi,\varphi\in Aut(\mathbb{D})$ such that $\phi(a_1)=b_1$, $\phi(b_1)=b_2$, and $g_2\circ\varphi =\phi\circ g_1$.
\end{definition}

The following result gives a characterization of Shabat-Blaschke products:
\begin{theorem}\label{Shabat-Blaschke products and trees}
An equivalence class of hyperbolic Belyi maps consists of a hyperbolic Belyi map of the form $(\mathbb{D}, g,a,b)$, where $(g,a,b)$ is a  Shabat-Blaschke product, if and only if the corresponding equivalence class of transitive monodromy representations is a tree.
\end{theorem}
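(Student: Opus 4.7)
The plan is to reduce the statement, via Theorem~\ref{hyperbolic Belyi and transitive monodromy}, to identifying when the source Riemann surface $X$ of a hyperbolic Belyi map $(X,f,a,b)$ is biholomorphic to $\mathbb{D}$, and then to extract this from a Riemann–Hurwitz computation combined with uniformization and Liouville's theorem. Fix $\lambda>0$ and a transitive monodromy $\rho$, and let $(X,f,a,b)$ be a hyperbolic Belyi map of modulus $\lambda$ corresponding to $\rho$. Any equivalence of hyperbolic Belyi maps is witnessed by a biholomorphism of the source surfaces, so whether the equivalence class admits a representative of the form $(\mathbb{D},g,a,b)$ depends only on the biholomorphism type of $X$; and once such a representative exists, $g:\mathbb{D}\to\mathbb{D}$ is a proper holomorphic self-map of the disk, hence a finite Blaschke product by the classical theorem, so $(g,a,b)$ is automatically a Shabat–Blaschke product. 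It therefore suffices to show that $\rho$ is a tree if and only if $X\cong\mathbb{D}$.

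For the key computation I would apply Lemma~\ref{topologically finite, Riemann-Hurwitz} to $f:X\to\mathbb{D}$. Its critical values lie in $\{a,b\}$, and by Lemma~\ref{number of cycles} the fibres $f^{-1}(a)$ and $f^{-1}(b)$ have cardinalities $c_1$ and $c_2$. With $n=\deg f$ and using $\sum_{p\in f^{-1}(y)} e_p = n$ on each fibre, the ramification divisor contributes
\[
\deg R_f \;=\; (n-c_1)+(n-c_2) \;=\; 2n-c_1-c_2,
\]
whence $\chi(X)=n\cdot\chi(\mathbb{D})-\deg R_f = c_1+c_2-n$. Consequently the tree condition $c_1+c_2=n+1$ is exactly the condition $\chi(X)=1$.

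It remains to check that among topologically finite noncompact Riemann surfaces $\chi(X)=1$ characterises $X\cong\mathbb{D}$; that $X$ is topologically finite and noncompact was already observed in the discussion following Lemma~\ref{topologically finite, Riemann-Hurwitz}. If $X$ is homeomorphic to a closed orientable surface of genus $g$ with $k\geq 1$ disks or points removed, then $\chi(X)=2-2g-k$, and $\chi(X)=1$ forces $g=0$, $k=1$; in particular $X$ is simply connected. The uniformization theorem then gives $X\cong\mathbb{C}$ or $X\cong\mathbb{D}$, and since $f$ is a nonconstant bounded holomorphic function on $X$, Liouville's theorem rules out $\mathbb{C}$. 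The converse is immediate. The only step requiring a bit of care is the appeal to the classical theorem that every proper holomorphic self-map of $\mathbb{D}$ is a finite Blaschke product, together with checking that the biholomorphism identifying $X$ with $\mathbb{D}$ respects the equivalence relation on hyperbolic Belyi maps; the remaining analytic and topological work is routine.
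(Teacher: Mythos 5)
Your proposal is correct and follows essentially the same route as the paper: the Riemann--Hurwitz computation $\chi(X)=c_1+c_2-n$ via Lemma~\ref{number of cycles}, the identification of the tree condition with $\chi(X)=1$, and then uniformization plus Liouville to conclude $X\cong\mathbb{D}$, with Fatou's theorem supplying that a proper holomorphic self-map of $\mathbb{D}$ is a finite Blaschke product. The only difference is that you spell out the classification $\chi(X)=2-2g-k$ to justify that $\chi(X)=1$ forces $X$ to be a punctured sphere, a step the paper leaves implicit.
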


\begin{proof}
Let $(X,f,a,b)$ be a hyperbolic Belyi map and let $\rho:F_2\rightarrow S_n$ be its corresponding transitive monodromy representation. By the Riemann-Hurwitz formula in Lemma \ref{topologically finite, Riemann-Hurwitz} and by Lemma \ref{number of cycles}, 
\begin{eqnarray}
\chi(X)
&=&\deg f\cdot\chi(\mathbb{D})-\deg R_{f}\nonumber
\\&=&n-(2n-|f^{-1}(a)|-|f^{-1}(b)|)\nonumber
\\&=&-n+c_1+c_2.\nonumber
\end{eqnarray}
If $\rho$ is a tree, then $\chi(X)=1$, so $X$ is homeomorphic to the open unit disk $\mathbb{D}$. By Liouville's theorem, $X$ cannot be biholomorphic to $\mathbb{C}$. By the uniformization theorem, $X$ is thus biholomorphic to $\mathbb{D}$. Hence $(X,f,a,b)$ is equivalent to a hyperbolic Belyi map $(\mathbb{D}, g,a,b)$. Since $g:\mathbb{D}\rightarrow \mathbb{D}$ is a nonconstant proper holomorphic mapping, $g$ is a finite Blaschke product by a theorem of Fatou \cite[p. 212]{Remmert}. Hence $(g,a,b)$  is a Shabat-Blaschke product. Conversely, if $(X,f,a,b)$ is equivalent to $(\mathbb{D},g,a,b)$, where $(g,a,b)$ is a Shabat-Blaschke product, then $\chi(X)=1$, so $c_1+c_2=n+1$, i.e. its corresponding transitive monodromy representation is a tree.
\end{proof}

Via the transitive monodromy representations, we obtain for each fixed $\lambda\in (0,+\infty)$, a bijective correspondence between the equivalence classes of Belyi maps onto the Riemann sphere and the equivalence classes of hyperbolic Belyi maps of modulus $\lambda$. By Belyi's theorem, in each equivalent class of Belyi map onto the Riemann sphere, there is a representative element $(X,f,a,b,c)$ such that $X$ and $f$ are defined over $\overline{\mathbb{Q}}$. There  is a group action by the absolute Galois group $Gal(\overline{\mathbb{Q}}/\mathbb{Q})$ on the set of equivalence classes of Belyi maps onto the Riemann sphere, which is defined by acting on the algebraic coefficients of $X$ and $f$  \cite[p. 115-117]{Zvonkin}\cite[p. 250]{Girondo}\cite{Grothendieck}.  Thus for each fixed $\lambda$, there is an induced Galois action on the set of equivalence classes of hyperbolic Belyi maps  of modulus $\lambda$. Similarly, there is a Galois action on the set of equivalence classes of Shabat polynomials and we  have the following:

\begin{theorem}
For each fixed $\lambda\in (0,+\infty)$, there is a bijective correspondence between the equivalence classes of Shabat polynomials and the equivalence classes of Shabat-Blaschke products of modulus $\lambda$,  we also have an induced Galois action on the set of equivalence classes of Shabat-Blaschke products of modulus $\lambda$.
\end{theorem}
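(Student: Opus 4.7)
My plan is to obtain both assertions as direct consequences of the correspondence theorems already established, by composing bijections and transporting the standard Galois action.

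For the bijection, I would first combine Theorems \ref{bijective correspondence Belyi maps onto sphere and monodromy} and \ref{trees and Shabat polynomials} to identify equivalence classes of Shabat polynomials with equivalence classes of tree monodromy representations $\rho:F_2\rightarrow S_n$. Then, for each fixed $\lambda\in(0,+\infty)$, I would combine Theorems \ref{hyperbolic Belyi and transitive monodromy} and \ref{Shabat-Blaschke products and trees} to identify equivalence classes of tree monodromy representations with equivalence classes of Shabat-Blaschke products of modulus $\lambda$, where a Shabat-Blaschke product $(g,a,b)$ is regarded as the hyperbolic Belyi map $(\mathbb{D},g,a,b)$. The only small point to check here is that the equivalence relation on Shabat-Blaschke products defined in this section matches the restriction of the hyperbolic Belyi map equivalence to maps with domain $\mathbb{D}$, which is automatic since a biholomorphism $\mathbb{D}\rightarrow\mathbb{D}$ is precisely an element of $\mathrm{Aut}(\mathbb{D})$. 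Composing the two bijections yields the first assertion.

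For the Galois action, I would start from the standard $\mathrm{Gal}(\overline{\mathbb{Q}}/\mathbb{Q})$-action on equivalence classes of Belyi maps onto the sphere, defined by picking an $\overline{\mathbb{Q}}$-representative (guaranteed by Belyi's theorem) and conjugating its coefficients. Because the tree condition $c_1+c_2=n+1$ is a combinatorial invariant of the monodromy class and Galois conjugation preserves the cycle structure of the underlying dessin, this action restricts to equivalence classes of Shabat polynomials. Transporting it through the bijection of the previous paragraph then gives the induced action on equivalence classes of Shabat-Blaschke products of modulus $\lambda$.

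The only step that is not entirely routine is verifying that Galois conjugation genuinely preserves the Shabat property---equivalently, that the pole order at $\infty$ of an $\overline{\mathbb{Q}}$-Belyi map is Galois-invariant so that the conjugate map remains a polynomial. This is standard in the dessins d'enfant literature and follows from the fact that a field automorphism permutes the sheets of the cover without altering its combinatorial invariants; once it is granted, every remaining step is formal bookkeeping along the chain of bijections, and no new analytical or number-theoretic input should be required.
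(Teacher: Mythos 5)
Your proposal is correct and follows essentially the same route as the paper: compose the bijection between tree monodromy classes and Shabat polynomial classes (Theorem \ref{trees and Shabat polynomials}) with the bijection between tree monodromy classes and Shabat-Blaschke products of modulus $\lambda$ obtained from Theorems \ref{hyperbolic Belyi and transitive monodromy} and \ref{Shabat-Blaschke products and trees}, then transport the Galois action on Shabat polynomial classes through this bijection. The extra checks you flag (compatibility of the two equivalence relations, Galois-invariance of the Shabat property) are reasonable points of care but do not change the argument, which the paper treats as immediate.
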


\begin{proof}
Theorem \ref{trees and Shabat polynomials} says that there is a bijective correspondence between the equivalence classes of tree monodromies and the equivalence classes of Shabat polynomials. Theorem \ref{Shabat-Blaschke products and trees} implies that for each fixed $\lambda\in (0,+\infty)$, there is a bijective correspondence between the equivalence classes of tree monodromies and the equivalence classes of Shabat-Blaschke products of modulus $\lambda$. The first claim follows by compositing the two correspondences.  The Galois action on the set of equivalence classes of Shabat-Blaschke products of modulus $\lambda$ is induced from that on the set of  equivalence classes of Shabat polynomials.
\end{proof}
~\\

\section{Size of hyperbolic dessins d'enfant in $\mathbb{D}$}\label{dessin}

The \textbf{dessin d'enfant} of a Belyi map $(X,f,a,b,c)$ onto the Riemann sphere is defined to be the preimage of a geodesic in $\widehat{\mathbb{C}}$ joining $a$ and $b$ \cite[p. 80]{Zvonkin}\cite{Grothendieck}. We can define the \textbf{hyperbolic dessin d'enfant} of a hyperbolic Belyi map $(X,f,a,b)$ similarly  to be the preimage of the geodesic $l$ joining $a$ and $b$ under the Poincar\'{e} metric. We regard the preimages of $a$ and $b$ as the white and black vertices respectively of the dessin, while the $n$ liftings of the geodesic as the edges of the dessin, where $n$ is the degree of $f$.  If the associated transitive monodromy representation of a (hyperbolic) Belyi map is a tree, then by Lemma \ref{number of cycles}, $|f^{-1}(a)|+|f^{-1}(b)|=c_1+c_2=n+1$, so its associated dessin has $n+1$ vertices and $n$ edges, and such dessin is a connected bipartite tree embedded in $X$, where $X=\widehat{\mathbb{C}}$ if the Belyi map is onto $\widehat{\mathbb{C}}$, and $X=\mathbb{D}$ if the Belyi map is hyperbolic.\\

Let $(B,a,b)$ be a Shabat-Blaschke product and $l$ be the geodesic joining $a$ and $b$ under the Poincar\'{e} metric of $\mathbb{D}$. Then $\mathbb{D}\setminus B^{-1}(l)$ is doubly connected and the modulus of it will be called the {\bf size} of the hyperbolic dessin d'enfant $B^{-1}(l)$.\\

Let $A_r:=\{z: r<|z|<1\}$. We have the following lemma:
\begin{lemma}\label{unbranched covering annulus}
If $f:U\rightarrow A_r$ is a proper unbranched covering of degree $n$, then $U$ is biholomorphic to the doubly-connected domain  $A_{r^{1/n}}$.
\end{lemma}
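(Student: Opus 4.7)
The plan is to identify $U$ topologically with $A_{r^{1/n}}$ using the classification of covering spaces of an annulus, and then upgrade this topological identification to a biholomorphism by exploiting the fact that an unbranched holomorphic covering is locally a biholomorphism.

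For the topological identification, I would first note that $\pi_1(A_r) \cong \mathbb{Z}$. By the classification of connected coverings, isomorphism classes (over $A_r$) of connected $n$-sheeted unbranched coverings of $A_r$ correspond bijectively to subgroups of $\mathbb{Z}$ of index $n$, of which there is exactly one, namely $n\mathbb{Z}$. A concrete model realizing this subgroup is the proper holomorphic map $p: A_{r^{1/n}} \to A_r$ given by $p(w) = w^n$: it is an unbranched $n$-sheeted covering with $p_*\pi_1(A_{r^{1/n}}) = n\mathbb{Z}$. In the context of the lemma $U$ is a domain, hence connected, so $f: U \to A_r$ is a connected $n$-sheeted unbranched topological covering. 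The classification therefore furnishes a homeomorphism $\phi: U \to A_{r^{1/n}}$ satisfying $p \circ \phi = f$.

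To finish, I would promote $\phi$ to a biholomorphism. Since $p$ is an unbranched holomorphic covering, each point of $A_{r^{1/n}}$ has a neighborhood on which $p$ is a biholomorphism onto its image, with a holomorphic local inverse. On the corresponding neighborhood in $U$ we then have $\phi = p^{-1} \circ f$, which is holomorphic because $f$ is. The symmetric argument, using that $f$ is also a local biholomorphism, shows $\phi^{-1} = f^{-1} \circ p$ is holomorphic. Hence $\phi$ is a biholomorphism, as desired.

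The only nontrivial step is the classification input, i.e.\ the computation $\pi_1(A_r) \cong \mathbb{Z}$ together with the uniqueness of index-$n$ subgroups of $\mathbb{Z}$, which is where the specific form $r^{1/n}$ enters. Once the topological model is fixed, the upgrade to a holomorphic equivalence via Lemma \ref{pullback of complex structure} (or directly, as above) is routine. A fully self-contained alternative would be to lift $f$ through the universal cover $\exp: \{z : \log r < \operatorname{Re}(z) < 0\} \to A_r$ and then push down by $w \mapsto w^n$, but invoking the classification is cleaner.
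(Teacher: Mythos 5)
Your proposal is correct and follows essentially the same route as the paper: compute $\pi_1(A_r)\cong\mathbb{Z}$, invoke the classification of coverings to identify $f$ with the power map $w\mapsto w^n$ on $A_{r^{1/n}}$, and conclude. You are in fact slightly more careful than the paper's version, which leaves implicit both the restriction to finite-index subgroups and the upgrade of the topological isomorphism to a biholomorphism.
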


\begin{proof}
The fundamental group $\pi_1(A_{r})$ is $\mathbb{Z}$. There is a bijective correspondence between the subgroups of $\pi_1(A_{r})$ and the power maps $g_m:A_{r^{1/m}}\rightarrow A_{r}$, $m\geq 1$, defined by $g_m(z)=z^m$. This implies that the power maps are all the proper unbranched coverings of $A_{r}$ up to isomorphism. Since $f$ is of degree $n$, we know that $f$ is isomorphic to the power map $g_n$, so the domain $U$ is biholomorphic to $A_{r^{1/n}}$.
\end{proof}

\begin{theorem}
Suppose $(B,a,b)$ is a Shabat-Blaschke product of degree $n$ and of modulus $\lambda$. Let $l$ be the geodesic between $a$ and $b$ under the Poincar\'{e} metric of $\mathbb{D}$.  Then the modulus of $\mathbb{D}-B^{-1}(l)$ is $\lambda/n$.
\end{theorem}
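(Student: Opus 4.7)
The plan is to reduce the identity $M(\mathbb{D}\setminus B^{-1}(l)) = \lambda/n$ to a direct application of Lemma \ref{unbranched covering annulus}, by exhibiting $B\colon \mathbb{D}\setminus B^{-1}(l)\to \mathbb{D}\setminus l$ as a proper unbranched holomorphic covering of degree $n$ between doubly connected domains.

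First I would verify that this restriction really is such a covering. Because $B$ is a finite Blaschke product of degree $n$ whose critical values all lie in $\{a,b\}\subset l$, Lemma \ref{unbranched covering from branched covering} (applied to the set of critical values, then restricted further to remove the rest of $l$) shows that $B\colon \mathbb{D}\setminus B^{-1}(l)\to \mathbb{D}\setminus l$ is an unbranched holomorphic covering; properness and the fact that it has degree $n$ are inherited from the proper map $B\colon\mathbb{D}\to\mathbb{D}$.

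Next I would check that $\mathbb{D}\setminus B^{-1}(l)$ is itself a doubly connected domain, so that Lemma \ref{unbranched covering annulus} applies. By Lemma \ref{number of cycles} together with Theorem \ref{Shabat-Blaschke products and trees}, the hyperbolic dessin $B^{-1}(l)$ has $c_1+c_2=n+1$ vertices and $n$ edges, and is a connected bipartite tree embedded in $\mathbb{D}$ as noted in Section \ref{dessin}. Since this tree is a compact connected subset of $\mathbb{D}$ disjoint from $\partial\mathbb{D}$, the set $\widehat{\mathbb{C}}\setminus(\mathbb{D}\setminus B^{-1}(l))$ has exactly two connected components, namely $\widehat{\mathbb{C}}\setminus\mathbb{D}$ and $B^{-1}(l)$, so $\mathbb{D}\setminus B^{-1}(l)$ is doubly connected in the sense of Section \ref{Hyperbolic Belyi maps}. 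Writing $\mathbb{D}\setminus l\simeq A_r$ with $r=e^{-2\pi\lambda}$, Lemma \ref{unbranched covering annulus} then gives $\mathbb{D}\setminus B^{-1}(l)\simeq A_{r^{1/n}}$, whose modulus is $\frac{1}{2\pi}\log(r^{-1/n}) = \frac{1}{2\pi n}\log(1/r) = \lambda/n$, as required.

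The main obstacle is the connectedness of $B^{-1}(l)$: without it, the complement could split into several annular pieces, each of which would only be described individually by Lemma \ref{unbranched covering annulus}, and there would be no single ``modulus of $\mathbb{D}\setminus B^{-1}(l)$'' to compute. Connectedness ultimately stems from transitivity of the monodromy $\rho$ associated to $(\mathbb{D},B,a,b)$ via Theorem \ref{hyperbolic Belyi and transitive monodromy}: any two sheets $i,j\in\{1,\dots,n\}$ are joined by a word in $\rho(g_1),\rho(g_2)$, and the corresponding concatenation of lifted geodesic arcs produces a combinatorial path in $B^{-1}(l)$ between the edges indexed by $i$ and $j$. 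Once this is in hand the rest of the argument is a straightforward moduli computation.
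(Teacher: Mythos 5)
Your proof is correct and follows essentially the same route as the paper: the paper also composes $B$ with a conformal map $\phi:\mathbb{D}-l\to A_r$ to get a proper unbranched covering $\phi\circ B:\mathbb{D}-B^{-1}(l)\to A_r$ of degree $n$ and then applies Lemma \ref{unbranched covering annulus}. The only difference is that you explicitly justify the double connectedness of $\mathbb{D}-B^{-1}(l)$ via the connectedness of the dessin, which the paper asserts earlier in Section \ref{dessin} without proof; this is a reasonable piece of added care rather than a different argument.
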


\begin{proof}
There exists a biholomorphism $\phi$ that maps $\mathbb{D}-l$ onto an annulus $A_r$ for some $r\in (0,1)$. The map $\phi\circ B: \mathbb{D}-B^{-1}(l)\rightarrow A_r$ is a proper unbranched covering. By Lemma \ref{unbranched covering annulus}, we have
$$M(\mathbb{D}-B^{-1}(l))=M(A_{r^{1/n}})=\frac{1}{2\pi}\log(1/r^{1/n})=\frac{1}{n}M(A_r)=\frac{1}{n}M(\mathbb{D}-l)=\lambda/n.$$
\end{proof}

Loosely speaking, the above theorem showed quantitatively how the size of a hyperbolic dessin d'enfant of a Shabat-Blaschke product depends on the degree and the modulus of the Shabat-Blaschke product. Note  however that there is no such concept of the size of a dessin d'enfant of a Shabat polynomial, since the Riemann sphere with a connected tree taken away is a simply-connected domain.\\

\section{Jacobi elliptic functions}\label{Jacobi theta functions and Jacobi elliptic functions}
For any $\tau$ in the open upper-half plane $\mathbb{H}$, and $q=e^{2\pi i \tau}$, the \textbf{Jacobi theta functions} are defined as follows: 
$$\vartheta_1(v,\tau)=\sum_{n=-\infty}^\infty i^{2n-1} q^{(n+1/2)^2} e^{(2n+1)iv},$$
$$\vartheta_2(v,\tau)=\sum_{n=-\infty}^\infty q^{(n+1/2)^2} e^{(2n+1)iv},$$
$$\vartheta_3(v,\tau)=\sum_{n=-\infty}^\infty q^{n^2} e^{2niv},$$
$$\vartheta_0(v,\tau)=\sum_{n=-\infty}^\infty (-1)^n q^{n^2} e^{2niv}.$$
We also define
$$\omega_1(\tau)= \vartheta_3^2(0,\tau),\quad k(\tau)=\frac{\vartheta_2^2(0,\tau)}{\vartheta_3^2(0,\tau)}, \quad \sqrt{k(\tau)}=\frac{\vartheta_2(0,\tau)}{\vartheta_3(0,\tau)}.$$
The theta functions can be expressed in terms of each other, for example:
\begin{equation}\label{half period relation}
\vartheta_0(v,\tau)=\vartheta_3(v+1/2,\tau).
\end{equation}
The modular transformations \cite[p. 475]{Whittaker}\cite[p. 17]{Lawden} for the theta functions $\vartheta_3$ and $\vartheta_2$ are:
\begin{equation}\label{modular 1}
\vartheta_3(v,\tau+1)=\vartheta_3(v,\tau),
\end{equation}
\begin{equation}\label{modular 3}
\vartheta_2(v,\tau+1)=\vartheta_2(v,\tau),
\end{equation}
\begin{equation}\label{modular 2}
\vartheta_3(v,-1/\tau)=(-i\tau/2)^{1/2}e^{\frac{i\tau v^2}{2\pi}}\vartheta_3(\tau v/2,\tau/4),
\end{equation}
and
\begin{equation}\label{modular 4}
\vartheta_2(v,-1/\tau)=(-i\tau/2)^{1/2}e^{\frac{i\tau v^2}{2\pi}}\vartheta_0(\tau v/2,\tau/4).
\end{equation}
We also have 
\begin{equation}\label{modular 5}
\vartheta_3(0,\tau -1/2)=\vartheta_0(0,\tau)
\end{equation}
and 
\begin{equation}\label{quartic relation}
\vartheta_3^4(0,\tau)=\vartheta_2^4(0,\tau)+\vartheta_0^4(0,\tau).
\end{equation}
The \textbf{Jacobi elliptic functions} are defined as follows:
$$sn(u,\tau)=\frac{\vartheta_3(0,\tau)}{\vartheta_2(0,\tau)}\cdot \frac{\vartheta_1(u/\omega_1(\tau),\tau)}{\vartheta_0(u/\omega_1(\tau),\tau)},$$
$$cn(u,\tau)=\frac{\vartheta_0(0,\tau)}{\vartheta_2(0,\tau)}\cdot \frac{\vartheta_2(u/\omega_1(\tau),\tau)}{\vartheta_0(u/\omega_1(\tau),\tau)},$$
$$dn(u,\tau)=\frac{\vartheta_0(0,\tau)}{\vartheta_3(0,\tau)}\cdot \frac{\vartheta_3(u/\omega_1(\tau),\tau)}{\vartheta_0(u/\omega_1(\tau),\tau)},$$
$$cd(u,\tau)=\frac{cn(u,\tau)}{dn(u,\tau)}.$$
It is known that \cite[p. 26]{Lawden}
\begin{equation}\label{cd tends to cosine}
\lim_{\tau\rightarrow +i\infty} cd(u,\tau)=\cos(u).
\end{equation}
~\\

\section{Chebyshev-Blaschke products}
Let $n$ be a positive integer,  $\tau\in \mathbb{R}_{>0}i$, and $\mathbb{D}$ be the open unit disk. The \textbf{Chebyshev-Blaschke product} $f_{n,\tau}:\mathbb{D}\rightarrow \mathbb{D}$ introduced in \cite{WangNg}\cite{Tsang}\cite{Tsang1} is defined by
\begin{equation}\label{Blaschke product: functional definition}
f_{n,\tau}(z)=\sqrt{k(n\tau)}cd(n\omega_1(n\tau)u,n\tau),\end{equation}
where $$z=\sqrt{k(\tau)}cd(\omega_1(\tau)u,\tau).$$
The Chebyshev-Blaschke products $f_{n,\tau}, \tau\in\mathbb{R}_{>0}i$, are hyperbolic analogues of the \textbf{Chebyshev polynomial} $T_n:\mathbb{C}\rightarrow \mathbb{C}$ defined by
$$T_n(z)=cos(nu), \quad z=cos(u).$$
Chebyshev polynomials are examples of Shabat polynomials since they have exactly two critical values in $\mathbb{C}$.\\

If we regard $f_{n,\tau}$ as a rational function on $\widehat{\mathbb{C}}$ and let $\mathcal{T}_{n,\tau}:\widehat{\mathbb{C}}\rightarrow \widehat{\mathbb{C}}$ be defined by
$$\mathcal{T}_{n,\tau}(z)=\frac{f_{n,\tau}(\sqrt{k(\tau)}z)}{\sqrt{k(n\tau)}},$$
then $\mathcal{T}_{n,\tau}$ is referred to as an \textbf{elliptic rational function}. The elliptic rational functions have applications in filter design in engineering \cite[Chapter 12]{Lutovac}. It was shown in \cite{Tsang1} that 
\begin{equation}\label{elliptic rational tends to Chebyshev}
\lim_{\tau\rightarrow +i\infty}\mathcal{T}_{n,\tau}(z)=T_n(z).
\end{equation}

The zeros of this Chebyshev-Blaschke product is computed in \cite{Tsang}\cite{Tsang1}, so it can also be written as a rational function,
\begin{equation}\label{f as a product of Blaschke factor}
f_{n,\tau}(z)=
z^{(1-(-1)^n)/2}\prod_{i=1}^{\lfloor n/2\rfloor}\frac{z^2-b_i}{1-b_iz^2},
\end{equation}
where 
\begin{equation}\label{bi}
b_i=\frac{\vartheta_2^2((2i-1)\pi/2n,\tau)}{\vartheta_3^2((2i-1)\pi/2n,\tau)}, \quad 1\leq i\leq \lfloor n/2\rfloor.
\end{equation}

The Chebyshev-Blaschke product $f_{n,\tau}$ has exactly two critical values  in $\mathbb{D}$, $\sqrt{k(n\tau)}$ and $-\sqrt{k(n\tau)}$ \cite{Tsang1}. Therefore, the Chebyshev-Blaschke products $f_{n,\tau}$, $n\geq 1$, $\tau\in \mathbb{R}_{>0}i$, are examples of Shabat-Blaschke products, whose modulus is $\lambda_{n,\tau}=n\pi\tau/(4i)$ \cite{WangNg}. For all $\tau$, the monodromy of the Chebyshev-Blaschke product $f_{n,\tau}$ is the same as that of the Chebyshev polynomial $T_n$ \cite{WangNg}\cite{Tsang1}. The hyperbolic dessins d'enfant of the Chebyshev-Blaschke products are chains in $\mathbb{D}$.\\

\section{Rings of definition of Chebyshev-Blaschke products}\label{The coefficients and some Landen-type theta identities}

For each $j=1,\dots, \lfloor n/2\rfloor$, let 
\begin{equation}\label{coefficients in terms of theta}
S_{n,j}=\sum_{1\leq i_1<\cdots< i_j\leq \lfloor n/2\rfloor } b_{i_1}\dots b_{i_j},
\end{equation}
where $b_i$ is given by \eqref{bi}.
The $S_{n,j}$ are $\pm$ of the coefficients of $f_{n,\tau}$ when expanded, as
\begin{equation}\label{expanded}
f_{n,\tau}(z)=z^{(1-(-1)^n)/2}\frac{z^{2\lfloor n/2\rfloor}+\sum_{j=1}^{\lfloor n/2\rfloor}(-1)^jS_{n,j}(\tau)z^{2\lfloor n/2\rfloor -2j}}{1+\sum_{j=1}^{\lfloor n/2\rfloor}(-1)^jS_{n,j}(\tau)z^{2j}}.
\end{equation}

Since the $b_i$'s, as functions in $\tau$, are actually defined and meromorphic on the open upper half plane $\mathbb{H}$, so are $S_{n,j}$ and $f_{n,\tau}^{(k)}(0)$.

\begin{lemma}\label{power series coefficients and symmetric functions of zeros}
For each positive integer $n\geq 2$, and $j=1,\dots,\lfloor n/2\rfloor$, $$S_{n,j}\in\mathbb{Q}(f_{n,\tau}(0),f_{n,\tau}^\prime(0),f_{n,\tau}^{\prime\prime}(0),\dots),$$
a subfield of the field $\mathcal{M}(\mathbb{H})$ of meromorphic functions on the upper half plane.
\end{lemma}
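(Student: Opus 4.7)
I would begin by setting $m := \lfloor n/2 \rfloor$ and $\epsilon := (1-(-1)^n)/2 \in \{0,1\}$. The explicit form~\eqref{expanded} then lets us write
\[
f_{n,\tau}(z) = z^\epsilon R(z^2), \qquad R(w) = \frac{A(w)}{B(w)},
\]
with $A(w) := w^m + \sum_{j=1}^m (-1)^j S_{n,j}\, w^{m-j}$ and $B(w) := 1 + \sum_{j=1}^m (-1)^j S_{n,j}\, w^j$; note that $A(w) = w^m B(1/w)$, so the zeros of $A$ are the numbers $b_i$ of~\eqref{bi} and the zeros of $B$ are their reciprocals $1/b_i$. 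The Taylor coefficients $r_k := f_{n,\tau}^{(\epsilon + 2k)}(0)/(\epsilon + 2k)!$ of $R$ at $w = 0$ already lie in $\mathbb{Q}(f_{n,\tau}^{(\ell)}(0) : \ell \ge 0)$, so the plan is to recover each $S_{n,j}$ as a rational function of the $r_k$.

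The identity $B(w) R(w) = A(w)$ yields, on comparing the coefficient of $w^k$ for each $k > m$, the order-$m$ linear recurrence
\[
r_k + \sum_{j=1}^m (-1)^j S_{n,j}\, r_{k-j} = 0.
\]
Writing $T_j := (-1)^j S_{n,j}$ and specializing to $k = m+1, m+2, \ldots, 2m$ produces the $m \times m$ Hankel system
\[
H\, (T_1, \ldots, T_m)^\top = -(r_{m+1}, \ldots, r_{2m})^\top, \qquad H_{ij} := r_{m+i-j}.
\]
If I can show $\det H$ is nonzero in $\mathcal{M}(\mathbb{H})$, Cramer's rule will express each $T_j$---and hence each $S_{n,j}$---as a rational function of the $r_k$, completing the proof.

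I expect the main difficulty to be this Hankel invertibility. I would first observe that $A$ and $B$ are coprime in $\mathcal{M}(\mathbb{H})[w]$: for each $\tau \in \mathbb{R}_{>0}\, i$ all $b_i$ lie in $(0,1)$, so no $b_i$ can equal any $1/b_j$, and thus $\operatorname{Res}(A,B) \in \mathcal{M}(\mathbb{H})$ is not identically zero. Next, given $(\beta_1, \ldots, \beta_m) \in \ker H$, I would form $\widetilde{B}(w) := \sum_{j=1}^m \beta_j w^j$ and verify by induction on $k$---using the recurrence above to rewrite each $r_{k-j}$ with $k-j \ge m+1$ in terms of lower-indexed $r_\ell$---that $\sum_{j=1}^m \beta_j r_{k-j} = 0$ holds not only for $k = m+1,\ldots,2m$ but for \emph{every} $k \ge m+1$. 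Consequently $\widetilde{B}(w) R(w)$ is a polynomial of degree at most $m$, say $\widetilde{A}(w)$, so $\widetilde{A} \cdot B = \widetilde{B} \cdot A$; coprimality of $A$ and $B$ forces $\widetilde{B} = c B$ for some scalar $c$, and $\widetilde{B}(0) = 0 \neq 1 = B(0)$ forces $c = 0$. Hence every $\beta_j$ vanishes, $\ker H$ is trivial, $\det H \neq 0$, and the Cramer's rule argument concludes.
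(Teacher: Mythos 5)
Your proof is correct, and while it shares the paper's overall strategy---clear denominators in \eqref{expanded}, compare Taylor coefficients at $0$ to obtain a linear system over $\mathcal{M}(\mathbb{H})$ for the $S_{n,j}$, and finish with Cramer's rule---the step where you justify that the system is nonsingular is genuinely different. The paper works with the full infinite system $(*)$: it proves that $(S_{n,1},\dots,S_{n,\lfloor n/2\rfloor})$ is its \emph{unique} solution in $\mathcal{M}(\mathbb{H})^{\lfloor n/2\rfloor}$ by taking a putative second solution, restricting to a neighbourhood of a point of $\mathbb{R}_{>0}i$ avoiding its poles, comparing the two rational expressions for $f_{n,\tau}$ (both numerators being monic of the same degree), and invoking the identity theorem; it then appeals to Gaussian elimination to assert the existence of a nonsingular $\lfloor n/2\rfloor\times\lfloor n/2\rfloor$ subsystem without exhibiting one. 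You instead isolate the explicit Hankel block $H_{ij}=r_{m+i-j}$ coming from the tail of the recurrence and prove $\det H\neq 0$ directly, via the classical Kronecker-type argument: a kernel vector of $H$ yields $\widetilde B$ with $\widetilde B R$ polynomial of degree at most $m$, hence $\widetilde B A=\widetilde A B$, and coprimality of $A$ and $B$ (immediate from $b_i\in(0,1)$ at $\tau\in\mathbb{R}_{>0}i$, so no $b_i$ equals any $1/b_j$) forces $\widetilde B=cB$ and then $c=0$ from the constant terms. Your route buys an explicit determinantal formula for each $S_{n,j}$ in terms of the derivatives $f_{n,\tau}^{(\ell)}(0)$ with $\ell\leq (1-(-1)^n)/2+4\lfloor n/2\rfloor$, and it replaces the paper's somewhat informal ``the system $(**)$ should have exactly $\lfloor n/2\rfloor$ linear equations'' step with a concrete nonvanishing determinant; the paper's route avoids any determinant analysis by leaning entirely on uniqueness of the solution. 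Both arguments are sound.
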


\begin{proof}
From \eqref{expanded} we have for each $\tau\in\mathbb{R}_{>0}i$,
$$f_{n,\tau}(z)\left(1+\sum_{j=1}^{\lfloor n/2\rfloor}(-1)^jS_{n,j}z^{2j}\right)=z^{(1-(-1)^n)/2}\left(z^{2\lfloor n/2\rfloor}+\sum_{j=1}^{\lfloor n/2\rfloor}(-1)^jS_{n,j}z^{2\lfloor n/2\rfloor -2j}\right).$$
By expanding $f_{n,\tau}$ in power series at $0$ and comparing coefficients on both sides, and using the identity theorem, the tuple  $(S_{n,1},\dots, S_{n,\lfloor n/2\rfloor})$ satisfies a system ($*$) of countably many linear equations in $\lfloor n/2\rfloor$ $\mathcal{M}(\mathbb{H})$-variables whose coefficients are in
$$\mathbb{Q}(f_{n,\tau}(0),f_{n,\tau}^\prime(0),f_{n,\tau}^{\prime\prime}(0),\dots).$$
On the other hand, suppose $(T_1(\tau),\dots, T_{\lfloor n/2\rfloor}(\tau))\in \mathcal{M}(\mathbb{H})^{\lfloor n/2\rfloor}$ is a solution to ($*$). Since the poles of $T_j$ are discrete, there exists $\tau\in\mathbb{R}_{>0}i$ and a small open ball $\mathcal{N}$ around $\tau$ such that $T_j$ is analytic on $\mathcal{N}$ for all $j$.  Then for each $\tau\in\mathbb{R}_{>0}i\cap\mathcal{N}$, we have
$$f_{n,\tau}(z)\left(1+\sum_{j=1}^{\lfloor n/2\rfloor}(-1)^jT_{j}z^{2j}\right)=z^{(1-(-1)^n)/2}\left(z^{2\lfloor n/2\rfloor}+\sum_{j=1}^{\lfloor n/2\rfloor}(-1)^jT_{j}z^{2\lfloor n/2\rfloor -2j}\right)$$
on $\mathbb{D}$, and hence on $\mathbb{P}^1$ by the identity theorem. Therefore,
\begin{eqnarray}
&&z^{(1-(-1)^n)/2}\frac{z^{2\lfloor n/2\rfloor}+\sum_{j=1}^{\lfloor n/2\rfloor}(-1)^jS_{n,j}z^{2\lfloor n/2\rfloor -2j}}{1+\sum_{j=1}^{\lfloor n/2\rfloor}(-1)^jS_{n,j}z^{2j}}\nonumber
\\&=& z^{(1-(-1)^n)/2}\frac{z^{2\lfloor n/2\rfloor}+\sum_{j=1}^{\lfloor n/2\rfloor}(-1)^jT_{j}z^{2\lfloor n/2\rfloor -2j}}{1+\sum_{j=1}^{\lfloor n/2\rfloor}(-1)^jT_{j}z^{2j}}\nonumber
\end{eqnarray}
 on $\mathbb{P}^1$. Since the numerators of both sides are monic, we have $S_{n,j}(\tau)=T_j(\tau)$ for all $j=1,\dots, \lfloor n/2\rfloor$. Since this holds for all $\tau\in\mathbb{R}_{>0}i\cap\mathcal{N}$, by the identity theorem, $S_{n,j}=T_j$ on the upper half plane, for all $j$.  This shows that   $(S_{n,1},\dots, S_{n,\lfloor n/2\rfloor})$ is the unique solution to ($*$). By Gaussian elimination,  the infinite system ($*$) is equivalent to a system ($**$) of  at most $\lfloor n/2\rfloor$ linear equations in  $\lfloor n/2\rfloor$ $\mathcal{M}(\mathbb{H})$-variables, whose coefficients are again in
$$\mathbb{Q}(f_{n,\tau}(0),f_{n,\tau}^\prime(0),f_{n,\tau}^{\prime\prime}(0),\dots).$$ Now $(S_{n,1},\dots,  S_{n,\lfloor n/2\rfloor})$ is the unique solution to ($**$), so the system ($**$) should have exactly $\lfloor n/2\rfloor$ linear equations, and by Cramer's rule,
$$S_{n,j}\in\mathbb{Q}(f_{n,\tau}(0),f_{n,\tau}^\prime(0),f_{n,\tau}^{\prime\prime}(0),\dots)$$
for all $j=1,\dots, \lfloor n/2\rfloor$.
\end{proof}

From \eqref{f as a product of Blaschke factor}, we know that the Chebyshev-Blaschke product $f_{n,\tau}$ is even if $n$ is even and it is odd if $n$ is odd. 
By \eqref{Blaschke product: functional definition} and the properties on the Jacobi elliptic functions \cite[p. 499-500]{Whittaker},  we have
$$
f_{n,\tau}(0)=
\begin{dcases}
(-1)^{n/2}\sqrt{k(n\tau)} \quad &\text{if $n$ is even};
\\0  \quad &\text{if $n$ is odd}.
\end{dcases}
$$
By differentiating \eqref{Blaschke product: functional definition} and putting $z=0$, we have
$$
f_{n,\tau}^\prime(0)=
\begin{dcases}
0\quad &\text{if $n$ is even};
\\(-1)^{(n-1)/2}\cdot\frac{n\omega_1(n\tau)\sqrt{k(n\tau)}}{\omega_1(\tau)\sqrt{k(\tau)}}  \quad &\text{if $n$ is odd}.
\end{dcases}
$$
It was proved in \cite{Tsang} that for each $n\geq 1$ and $\tau\in \mathbb{R}_{>0}i$, the Chebyshev-Blaschke product $f_{n,\tau}$ satisfies the following nonlinear differential equation:
\begin{eqnarray}
&\omega_1^2(\tau)(k(\tau)-z^2)(1-k(\tau)z^2)\frac{d^2w}{dz^2}+\omega_1^2(\tau)[2k(\tau)z^3-(1+k^2(\tau))z]\frac{dw}{dz} \nonumber
\\&+n^2\omega_1^2(n\tau)[(1+k^2(n\tau))w-2k(n\tau)w^3]=0. \label{nonlinear differential equation}
\end{eqnarray}
Hence we have
$$
f_{n,\tau}^{\prime\prime}(0)=
\begin{dcases}
(-1)^{n/2}\cdot\frac{n^2\omega_1^2(n\tau)\sqrt{k(n\tau)}}{\omega_1^2(\tau)k(\tau)}(k^2(n\tau)-1)\quad &\text{if $n$ is even};
\\ 0\quad &\text{if $n$ is odd}.
\end{dcases}
$$
Let $$A(z):=\omega_1^2(\tau)(k(\tau)-z^2)(1-k(\tau)z^2)$$ and $$B(z):=\omega_1^2(\tau)[2k(\tau)z^3-(1+k^2(\tau))z].$$
Denote the binomial coefficients by $C^\alpha_\beta$. By applying Leibniz rule twice, for $i\geq 2$,
\begin{eqnarray}
(w^3)^{(i)}&=&3(w^2w^{(1)})^{(i-1)}\nonumber
\\&=&3\sum_{j=0}^{i-1}C_j^{i-1}(w^2)^{(j)}w^{(i-j)}\nonumber
\\&=&3w^2w^{(i)}+3\sum_{j=1}^{i-1}C_j^{i-1}(2ww^{(1)})^{(j-1)}w^{(i-j)}\nonumber
\\&=&3w^2w^{(i)}+6\sum_{j=1}^{i-1}\sum_{k=0}^{j-1}C_j^{i-1}C_k^{j-1}w^{(k)}w^{(j-k)}w^{(i-j)},\nonumber
\end{eqnarray}
so by taking the $i$-th derivative of \eqref{nonlinear differential equation},  we have for $i\geq 2$,
\begin{eqnarray}
&&Aw^{(i+2)}+\sum_{j=1}^i(C_j^i A^{(j)}+C^i_{j-1} B^{(j-1)}) w^{(i-j+2)}+B^{(i)} w^\prime+n^2\omega_1^2(n\tau)(1+k^2(n\tau))w^{(i)} \nonumber
\\&&-6n^2\omega_1^2(n\tau)k(n\tau)\left[w^2w^{(i)}+2\sum_{j=1}^{i-1}\sum_{k=0}^{j-1}C_j^{i-1}C_k^{j-1}w^{(k)}w^{(j-k)}w^{(i-j)}\right]=0. \label{i-th derivative}
\end{eqnarray}
For $j\geq 5$, the $j$-th derivatives of the polynomials $A$ and $B$ vanish, so for $i\geq 4$,
\begin{eqnarray}
&&Aw^{(i+2)}+\sum_{j=1}^4 (C_j^i A^{(j)}+C^i_{j-1} B^{(j-1)}) w^{(i-j+2)}+n^2\omega_1^2(n\tau)(1+k^2(n\tau))w^{(i)}\nonumber
\\&&-6n^2\omega_1^2(n\tau)k(n\tau)\left[w^2w^{(i)}+2\sum_{j=1}^{i-1}\sum_{k=0}^{j-1}C_j^{i-1}C_k^{j-1}w^{(k)}w^{(j-k)}w^{(i-j)}\right]=0.\nonumber
\end{eqnarray}
Putting $w=f_{n,\tau}$, $z=0$, we get for $i\geq 4$,
\begin{eqnarray}
&&\omega_1^2(\tau)k(\tau)f^{(i+2)}_{n,\tau}(0)-i^2\omega_1^2(\tau)(1+k^2(\tau))f_{n,\tau}^{(i)}(0)  \nonumber
\\&&+i(i-1)^2(i-2)\omega_1^2(\tau)k(\tau)f_{n,\tau}^{(i-2)}(0)
+n^2\omega_1^2(n\tau)(1+k^2(n\tau))f_{n,\tau}^{(i)}(0) \nonumber
\\&&-6n^2\omega_1^2(n\tau)k(n\tau)\left[f_{n,\tau}(0)^2 f_{n,\tau}^{(i)}(0)
+2\sum_{j=1}^{i-1}\sum_{k=0}^{j-1}C_j^{i-1}C_k^{j-1}f_{n,\tau}^{(k)}(0)f_{n,\tau}^{(j-k)}(0)f_{n,\tau}^{(i-j)}(0)
\right]
=0. \nonumber
\\&&\label{recurrence}
\end{eqnarray}
If $n$ and $i$ are of the same parity and $i\geq 4$, then by \eqref{recurrence},
\begin{eqnarray}
&&f^{(i+2)}_{n,\tau}(0)+\left[\frac{n^2\omega_1^2(n\tau)[1+(3(-1)^{n-1}-2)k^2(n\tau)]}{\omega_1^2(\tau)k(\tau)}-i^2\left(\frac{1}{k(\tau)}+k(\tau)\right)\right]f_{n,\tau}^{(i)}(0)  \nonumber
\\&&+i(i-1)^2(i-2)f_{n,\tau}^{(i-2)}(0) \nonumber
\\&&-\frac{12n^2\omega_1^2(n\tau)k(n\tau)}{\omega_1^2(\tau)k(\tau)}\sum_{j=1}^{i-1}\sum_{k=0}^{j-1}C_j^{i-1}C_k^{j-1}f_{n,\tau}^{(k)}(0)f_{n,\tau}^{(j-k)}(0)f_{n,\tau}^{(i-j)}(0)
=0. \label{recurrence1}
\end{eqnarray}
Differentiating \eqref{nonlinear differential equation} once, and putting $w=f_{n,\tau}$ and $z=0$, we have
\begin{eqnarray}
&&\omega_1^2(\tau)k(\tau)f_{n,\tau}^{(3)}(0)+[n^2\omega_1^2(n\tau)(1+k^2(n\tau))-\omega_1^2(\tau)(1+k^2(\tau))] f_{n,\tau}^\prime(0)\nonumber
\\&&-6n^2\omega_1^2(n\tau)k(n\tau)f_{n,\tau}^2(0)f_{n,\tau}^\prime(0)=0.\nonumber
\end{eqnarray}
Then if $n$ is odd,
$$
f_{n,\tau}^{(3)}(0)=(-1)^{(n+1)/2}\cdot\frac{n\omega_1(n\tau)\sqrt{k(n\tau)}}{\omega_1(\tau)\sqrt{k(\tau)}^3}
\left[\frac{n^2\omega_1^2(n\tau)}{\omega_1^2(\tau)}(1+k^2(n\tau))-(1+k^2(\tau))\right].
$$
If $n$ is even, $f_{n,\tau}^{(3)}(0)=0$.
\\By \eqref{i-th derivative}, if $n$ is even,
$$f_{n,\tau}^{(4)}(0)=(-1)^{n/2}\cdot\frac{n^2\omega_1^2(n\tau)\sqrt{k(n\tau)}}{\omega_1^2(\tau)k^2(\tau)}(1-k^2(n\tau))\left[\frac{n^2\omega_1^2(n\tau)}{\omega_1^2(\tau)}(1-5k^2(n\tau))-4(1+k^2(\tau))\right].$$
If $n$ is odd, then $f_{n,\tau}^{(4)}(0)=0$. 
\\By \eqref{i-th derivative} again, if $n$ is odd,
\begin{eqnarray}
f_{n,\tau}^{(5)}(0)
&=&(-1)^{(n-1)/2}\cdot\frac{n\omega_1(n\tau)\sqrt{k(n\tau)}}{\omega_1(\tau)\sqrt{k(\tau)}^5}\bigg[\frac{n^4\omega_1^4(n\tau)}{\omega_1^4(\tau)}(k^4(n\tau)+14k^2(n\tau)+1)\nonumber
\\&&\quad-10\frac{n^2\omega_1^2(n\tau)}{\omega_1^2(\tau)}(1+k^2(\tau))(1+k^2(n\tau))+3(3k^4(\tau)+2k^2(\tau)+3)\bigg].\nonumber
\end{eqnarray}
If $n$ is even, then $f_{n,\tau}^{(5)}(0)=0$. 
By the  expressions for $f_{n,\tau}(0),f_{n,\tau}^\prime(0),\cdots, f_{n,\tau}^{(5)}(0)$, together with  \eqref{recurrence1}  and induction, we know that for each $n$ and $i$, as a function in $\tau$ on the upper half plane,
$$f_{n,\tau}^{(i)}(0)\in\mathbb{Q}\left(\sqrt{k(\tau)},\sqrt{k(n\tau)}, \frac{\omega_1(n\tau)}{\omega_1(\tau)}\right).$$
Hence by Lemma \ref{power series coefficients and symmetric functions of zeros}, we have the following:
\begin{theorem}\label{CB-product coefficients in field of modulus}
For each $n\geq 1$, the coefficients of the Chebyshev-Blaschke product $f_{n,\tau}$ are   in the field
$$\mathbb{Q}\left(\sqrt{k},\sqrt{k\circ s_n}, \frac{\omega_1\circ s_n}{\omega_1}\right),$$
where $s_n(\tau)=n\tau$. By multiplying both the numerator and denominator of $f_{n,\tau}$ by the product of the denominators of the $S_{n,j}$, $j=1,\dots, \lfloor n/2\rfloor$, we know that $f_{n,\tau}$ is defined over the ring
$$\mathbb{Z}\bigg[\sqrt{k},\sqrt{k\circ s_n}, \frac{\omega_1\circ s_n}{\omega_1}\bigg].$$
Moreover,$$\frac{\vartheta_2((2i-1)\pi/2n,\tau)}{\vartheta_3((2i-1)\pi/2n,\tau)},\qquad i=1,\dots, \lfloor n/2\rfloor,$$
are algebraic over this field since they are zeros of $f_{n,\tau}$.
\end{theorem}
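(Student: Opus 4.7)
The plan is to deduce the theorem from the explicit formulas already computed for $f_{n,\tau}^{(i)}(0)$, $i=0,\dots,5$, together with the recurrence \eqref{recurrence1}, and then invoke Lemma \ref{power series coefficients and symmetric functions of zeros}. Set $K := \mathbb{Q}\!\left(\sqrt{k},\sqrt{k\circ s_n},\tfrac{\omega_1\circ s_n}{\omega_1}\right)\subset \mathcal{M}(\mathbb{H})$. The explicit expressions displayed just above the theorem for $f_{n,\tau}(0),f_{n,\tau}'(0),\ldots,f_{n,\tau}^{(5)}(0)$ show that each of these derivatives at $0$ lies in $K$ (the appearance of $\sqrt{k}$ and $\sqrt{k\circ s_n}$ in the denominators only, with integer powers, is absorbed in $K$), and each vanishes when its order has the opposite parity to $n$.

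The first main step is to establish by strong induction on $i$ that $f_{n,\tau}^{(i)}(0)\in K$ for every $i\geq 0$. Since $f_{n,\tau}$ is even if $n$ is even and odd if $n$ is odd (visible from \eqref{f as a product of Blaschke factor}), $f_{n,\tau}^{(i)}(0)=0$ whenever $i$ and $n$ have opposite parity, so we only need to carry the induction over indices sharing the parity of $n$. The base cases are the explicit values already computed. For the inductive step, solve \eqref{recurrence1} for $f_{n,\tau}^{(i+2)}(0)$: the coefficient of $f_{n,\tau}^{(i)}(0)$ involves $\frac{n^2(\omega_1\circ s_n)^2}{\omega_1^2}$, $1/k$, $k$, $(k\circ s_n)^2$, all of which lie in $K$; the coefficient of $f_{n,\tau}^{(i-2)}(0)$ is a rational integer; and the cubic cross-term is a $K$-multiple of a sum of triple products $f_{n,\tau}^{(k)}(0)f_{n,\tau}^{(j-k)}(0)f_{n,\tau}^{(i-j)}(0)$ with all indices strictly less than $i+2$, hence in $K$ by induction. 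Since $K$ is a field, $f_{n,\tau}^{(i+2)}(0)\in K$, completing the induction.

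Next I apply Lemma \ref{power series coefficients and symmetric functions of zeros}: each symmetric function $S_{n,j}$ lies in $\mathbb{Q}(f_{n,\tau}(0),f_{n,\tau}'(0),f_{n,\tau}''(0),\dots)\subseteq K$, and by \eqref{expanded} the $\pm S_{n,j}$ are precisely the non-leading coefficients of the numerator and denominator of $f_{n,\tau}$ (which is monic by \eqref{f as a product of Blaschke factor}). Thus $f_{n,\tau}$ is defined over $K$. To pass from $K$ to the ring $R:=\mathbb{Z}\!\left[\sqrt{k},\sqrt{k\circ s_n},\tfrac{\omega_1\circ s_n}{\omega_1}\right]$, multiply both numerator and denominator of the rational expression \eqref{expanded} by the common denominator (in $R$) of the finitely many $S_{n,j}$'s; this clears denominators and exhibits $f_{n,\tau}$ as a rational function whose numerator and denominator have coefficients in $R$. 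Finally, for the algebraic claim on $\vartheta_2((2i-1)\pi/2n,\tau)/\vartheta_3((2i-1)\pi/2n,\tau)$: by \eqref{bi} this ratio squared is $b_i$, and from \eqref{f as a product of Blaschke factor} the $\pm\sqrt{b_i}$ are zeros of $f_{n,\tau}$ viewed as a rational function over $K$, hence they are algebraic over $K$.

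The main obstacle is mostly bookkeeping: keeping the parity of $i$ synchronised with that of $n$ throughout the induction, verifying that every coefficient appearing in \eqref{recurrence1} after clearing $\omega_1^2(\tau)k(\tau)$ really does land in $K$ (rather than in some larger field containing $k(\tau)$, $k(n\tau)$ separately without the square roots), and confirming that the cubic cross-term sum is genuinely supported on indices $<i+2$ of the correct parity so the induction hypothesis applies. Nothing beyond elementary field arithmetic is needed once these parity and degree checks are in place.
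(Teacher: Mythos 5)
Your proposal is correct and follows essentially the same route as the paper: the paper likewise takes the explicit formulas for $f_{n,\tau}^{(i)}(0)$, $i=0,\dots,5$, as base cases, runs an induction on $i$ via the recurrence \eqref{recurrence1} to place every $f_{n,\tau}^{(i)}(0)$ in $\mathbb{Q}\left(\sqrt{k},\sqrt{k\circ s_n},\frac{\omega_1\circ s_n}{\omega_1}\right)$, and then invokes Lemma \ref{power series coefficients and symmetric functions of zeros} together with \eqref{expanded} to transfer this to the $S_{n,j}$, with the denominator-clearing and algebraicity remarks handled exactly as you describe. Your added parity bookkeeping is a correct elaboration of what the paper leaves implicit.
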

This theorem is analogous to the fact that the coefficients of the Chebyshev polynomials are in $\mathbb{Z}$, and also the fact that   
for each $n\geq 2$ and $i=1,\dots, \lfloor n/2\rfloor$,
$$\cos\left(\frac{(2i-1)\pi}{2n}\right)$$
is an algebraic number.
Note also that when $\tau\rightarrow +i\infty$, 
$$\sqrt{k(\tau)}\rightarrow 0 \quad \text{and} \quad \frac{\omega_1(n\tau)}{\omega_1(\tau)}\rightarrow 1,$$
so the ring in the theorem degenerates to $\mathbb{Z}$ when $\tau\rightarrow +i\infty$.

\begin{remark}
Another result similar to Theorem \ref{CB-product coefficients in field of modulus} is Theorem 4.1 in the paper \cite{Ismail} of Ismail and Zhang. It was proved that the coefficients of Ramanujan entire functions are lying in a polynomial ring over $\mathbb{C}(q)$ generated by expressions in terms of $q^{1/4}, \vartheta_2(0,\tau)$ and $\vartheta_3(0,\tau)$.
\end{remark}

Next, we will prove that the coefficients of the Chebyshev-Blaschke products are in the algebraic closure $\overline{\mathbb{Q}(j)}$, where $j$ is the $j$-invariant.

\begin{lemma}
For any positive integer $n$, $\vartheta_2^4(0,n\tau)$ and $\vartheta_3^4(0,n\tau)$ are modular forms of weight $2$ with respect to the Hecke congruence subgroup $\Gamma_0(4n)$.
\end{lemma}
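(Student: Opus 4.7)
The strategy is to reduce to the case $n=1$: once $\vartheta_2^4(0,\tau)$ and $\vartheta_3^4(0,\tau)$ are shown to be modular forms of weight $2$ on $\Gamma_0(4)$, Lemma \ref{isogeny modular forms} applied with the scaling $\tau\mapsto n\tau$ immediately yields that $\vartheta_2^4(0,n\tau)$ and $\vartheta_3^4(0,n\tau)$ are modular forms of weight $2$ on $\Gamma_0(4n)$. So the entire work lies in the base case $n=1$.

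For $n=1$, I verify the three defining conditions. (i) Holomorphy on $\mathbb{H}$ is immediate from absolute convergence of the defining $q$-series. (ii) For the weight-$2$ invariance under $\Gamma_0(4)$: at even weight $-I$ acts trivially, so it suffices to check the law on generators of $\Gamma_0(4)/\{\pm I\}$, which can be taken to be $T=\left(\begin{smallmatrix}1&1\\0&1\end{smallmatrix}\right)$ and $U=\left(\begin{smallmatrix}1&0\\4&1\end{smallmatrix}\right)$. Invariance of the fourth powers under $T$ is immediate from \eqref{modular 1} and \eqref{modular 3} (any fourth root of unity introduced is killed by raising to the fourth power).

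The key computation is under $U$. Writing $U\tau=\tau/(4\tau+1)=-1/\sigma$ with $\sigma:=-1/\tau-4$, an application of \eqref{modular 2} gives $\vartheta_3(0,U\tau)=(-i\sigma/2)^{1/2}\vartheta_3(0,\sigma/4)$; since $\sigma/4=-1/(4\tau)-1$, the periodicity \eqref{modular 1} simplifies this to $\vartheta_3(0,-1/(4\tau))$, and a second application of \eqref{modular 2} with $\tau$ replaced by $4\tau$ yields $\vartheta_3(0,-1/(4\tau))=(-2i\tau)^{1/2}\vartheta_3(0,\tau)$. Using $-\sigma\tau=4\tau+1$ to combine the two prefactors, I obtain
$$\vartheta_3(0,U\tau)=(4\tau+1)^{1/2}\vartheta_3(0,\tau),$$
whose fourth power is the required weight-$2$ transformation. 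The parallel calculation for $\vartheta_2$ uses \eqref{modular 4}: one application leaves $\vartheta_0(0,-1/(4\tau))$, which is converted back to $\vartheta_2(0,\tau)$ by applying \eqref{modular 4} again with $\tau$ replaced by $-1/\tau$. The same factor $(4\tau+1)^{1/2}$ emerges, and the fourth power gives the weight-$2$ law for $\vartheta_2^4$.

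Finally, (iii) holomorphy at the three cusps $\infty,0,1/2$ of $\Gamma_0(4)$ is checked from $q$-expansions. At $\infty$ one reads off $\vartheta_3^4(0,\tau)=1+8q+\cdots$ and $\vartheta_2^4(0,\tau)=16q+\cdots$, both manifestly holomorphic. At the cusps $0$ and $1/2$, I conjugate by a matrix in $SL(2,\mathbb{Z})$ sending $\infty$ to the cusp in question and use \eqref{modular 1}, \eqref{modular 2}, \eqref{modular 4}, \eqref{modular 5} to rewrite the slashed function as a $q^{1/h}$-series (where $h$ is the width of the cusp) with no negative powers. The main obstacle throughout is the careful bookkeeping of branches of square roots in the $U$-computation, but this ambiguity is annihilated once one passes to the fourth power, so the identities hold unambiguously at weight $2$.
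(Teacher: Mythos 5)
Your proposal is correct and follows the same skeleton as the paper's proof: reduce to $n=1$ via Lemma \ref{isogeny modular forms}, verify holomorphy on $\mathbb{H}$, and check the weight-$2$ transformation law on the two generators $\left(\begin{smallmatrix}1&1\\0&1\end{smallmatrix}\right)$ and $\left(\begin{smallmatrix}1&0\\4&1\end{smallmatrix}\right)$ of $\Gamma_0(4)$, arriving at the same identity $\vartheta_3\bigl(0,\tfrac{\tau}{4\tau+1}\bigr)=(4\tau+1)^{1/2}\vartheta_3(0,\tau)$ (the paper states this without the double application of the inversion formula that you spell out). The one place where you genuinely diverge is the cusp condition: you propose conjugating to each of the cusps $\infty$, $0$, $1/2$ and exhibiting a nonnegative $q^{1/h}$-expansion, whereas the paper avoids any computation at the non-infinite cusps by observing that the Fourier coefficients of $\vartheta_3^4(0,\tau)$ and $\vartheta_2^4(0,\tau)$ count representations as ordered sums of four squares, hence grow at most polynomially, and then invoking Proposition 1.2.4 of Diamond--Shurman, which upgrades weight-$k$ invariance plus polynomially bounded coefficients at $\infty$ to holomorphy at all cusps. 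Your route is more hands-on and self-contained but leaves the expansions at $0$ and $1/2$ as a sketch (they are routine: the inversion formula converts the slashed functions into $-\tfrac14\vartheta_3^4(0,\tau/4)$ and $-\tfrac14\vartheta_0^4(0,\tau/4)$ respectively, which are visibly holomorphic in $q^{1/4}$); the paper's route buys a uniform treatment of all cusps at the cost of citing a nontrivial criterion. Either way the statement is established.
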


\begin{proof}
By \eqref{half period relation}, \eqref{modular 4} and \cite[p. 338]{Freitag}, both $\vartheta_3(0,\tau)$ and $\vartheta_2(0,\tau)$ are holomorphic on the open upper half plane $\mathbb{H}$. By \eqref{modular 1} and \eqref{modular 2}, we have $$\vartheta_3(0,\tau+1)=\vartheta_3(0,\tau) \quad\text{ and }\quad \vartheta_3\left(0,\frac{\tau}{4\tau+1}\right)=(4\tau+1)^{1/2}\vartheta_3(0,\tau).$$
Since $\Gamma_0(4)$ is generated by 
$$\pm\left(
\begin{array}{cc}
1 & 1
\\0 & 1
\end{array}
\right)
\quad \text{and}\quad
\pm\left(
\begin{array}{cc}
1 & 0
\\4 & 1
\end{array}
\right),
$$
see \cite[p. 21]{Diamond}, we have that $\vartheta_3^4(0,\tau)$ is weight 2 invariant under $\Gamma_0(4)$. Similarly, by \eqref{modular 3} and \eqref{modular 4}, $\vartheta_2^4(0,\tau)$ is weight $2$ invariant under  $\Gamma_0(4)$. It is known that the $n$th-coefficient $d_n$ of the Fourier series of $\vartheta_3^4(0,\tau)$ is the number of ways to express $n$ as an ordered sum of squares of four integers, so $|d_n|\leq (2n+1)^4\leq 3^4n^4$ for $n>0$, so the $n$-th coefficient $e_n$ of the Fourier series in $q^{1/4}$ satisfies $|e_n|\leq (3/4)^4n^4$ for $n>0$. By Proposition 1.2.4 in \cite[p. 17]{Diamond}, $\vartheta_3^4(0,\tau)$ is a weight 2 modular form with respect to $\Gamma_0(4)$. Let $\widetilde{q}=q^{1/4}$. Then
$$\vartheta_2^4(0,\tau)=\left(\sum_{n=-\infty}^\infty \widetilde{q}^{(2n+1)^2}\right)^4$$
whose $n$-th coefficient $c_n$ count the number of ways to express $n$ as an ordered sum of squares of four odd integers, so $c_n$ satisfies a similar bound for $n>0$, and hence $\vartheta_2^4(0,\tau)$ is a weight 2 modular form with respect to $\Gamma_0(4)$. By Lemma \ref{isogeny modular forms}, $\vartheta_2^4(0,n\tau)$ and $\vartheta_3^4(0,n\tau)$ are weight 2 modular forms with respect to $\Gamma_0(4n)$.
\end{proof}

\begin{corollary}\label{elliptic modulus are modular}
The functions $k^2(\tau), k^2(n\tau)$, and $\frac{\omega_1^2(n\tau)}{\omega_1^2(\tau)}$ are modular functions with respect to $\Gamma_0(4n)$.
\end{corollary}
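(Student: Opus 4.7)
The plan is to write each of the three functions as a quotient of modular forms of equal weight with respect to $\Gamma_0(4n)$, and then verify the three defining properties of a modular function (holomorphy on $\mathbb{H}$, weight $0$ invariance under $\Gamma_0(4n)$, meromorphy at the cusps).

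First I would note that $\Gamma_0(4n)\subseteq \Gamma_0(4)$, so the preceding lemma gives that $\vartheta_2^4(0,\tau)$ and $\vartheta_3^4(0,\tau)$ are weight $2$ modular forms with respect to $\Gamma_0(4n)$, while it directly gives that $\vartheta_2^4(0,n\tau)$ and $\vartheta_3^4(0,n\tau)$ are weight $2$ modular forms with respect to $\Gamma_0(4n)$. Then from the definitions $\omega_1(\tau)=\vartheta_3^2(0,\tau)$ and $k(\tau)=\vartheta_2^2(0,\tau)/\vartheta_3^2(0,\tau)$ I would rewrite
\[
k^2(\tau)=\frac{\vartheta_2^4(0,\tau)}{\vartheta_3^4(0,\tau)},\qquad k^2(n\tau)=\frac{\vartheta_2^4(0,n\tau)}{\vartheta_3^4(0,n\tau)},\qquad \frac{\omega_1^2(n\tau)}{\omega_1^2(\tau)}=\frac{\vartheta_3^4(0,n\tau)}{\vartheta_3^4(0,\tau)},
\]
each of which is a ratio of two weight $2$ forms with respect to $\Gamma_0(4n)$, hence weight $0$ invariant under $\Gamma_0(4n)$.

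Next, for the holomorphy (in fact, meromorphy) on $\mathbb{H}$, I would invoke the standard non-vanishing of $\vartheta_3(0,\tau)$ on the upper half plane (for instance via the Jacobi triple product identity $\vartheta_3(0,\tau)=\prod_{m\ge 1}(1-q^m)(1+q^{m-1/2})^2$, with $|q|<1$, whose factors are all nonzero). This implies $\vartheta_3^4(0,\tau)$ and $\vartheta_3^4(0,n\tau)$ are nowhere zero on $\mathbb{H}$, so all three quotients are holomorphic on $\mathbb{H}$.

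Finally, for meromorphy at the cusps, I would use the fact that both numerator and denominator are modular forms, and in particular holomorphic at every cusp of $\Gamma_0(4n)$; therefore the quotient is meromorphic at every cusp. Combining the three verifications, each of $k^2(\tau)$, $k^2(n\tau)$ and $\omega_1^2(n\tau)/\omega_1^2(\tau)$ satisfies the definition of a modular function of level $4n$. The only subtle point is the non-vanishing of $\vartheta_3(0,\tau)$ on $\mathbb{H}$, which I would expect to be the main obstacle a reader might worry about; everything else is a direct bookkeeping consequence of the preceding lemma and Lemma \ref{isogeny modular forms}.
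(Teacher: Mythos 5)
Your proof is correct and is essentially the argument the paper leaves implicit: the corollary is stated there without proof as an immediate consequence of the preceding lemma, and your decomposition of the three functions into quotients of the weight-$2$ forms $\vartheta_2^4(0,\cdot)$, $\vartheta_3^4(0,\cdot)$ at $\tau$ and $n\tau$, together with the nonvanishing of $\vartheta_3(0,\tau)$ on $\mathbb{H}$ and holomorphy of numerator and denominator at the cusps, is exactly the intended justification. The only nit is that with the paper's convention $q=e^{2\pi i\tau}$ the triple product reads $\vartheta_3(0,\tau)=\prod_{m\ge 1}\left(1-q^{2m}\right)\left(1+q^{2m-1}\right)^2$ rather than the form you quoted, but this does not affect the nonvanishing you need.
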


\begin{theorem}
For each positive integer $n$, the coefficients of the  Chebyshev-Blaschke product $f_{n,\tau}$  are in the algebraic closure $\overline{\mathbb{Q}(j)}$, where $j$ is the $j$-invariant.
\end{theorem}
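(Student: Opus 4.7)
The plan is to combine Theorem \ref{CB-product coefficients in field of modulus} with the modular-functions machinery in the preliminaries. Theorem \ref{CB-product coefficients in field of modulus} tells us that the coefficients of $f_{n,\tau}$ lie in the ring $\mathbb{Z}\bigl[\sqrt{k},\sqrt{k\circ s_n},(\omega_1\circ s_n)/\omega_1\bigr]$. Since $\overline{\mathbb{Q}(j)}$ is a field, it is enough to show that each of the three generators is algebraic over $\mathbb{Q}(j)$, and for this it suffices to exhibit a nontrivial algebraic power of each generator that already lies in $\overline{\mathbb{Q}(j)}$. Using $(\sqrt{k})^4=k^2$, $(\sqrt{k\circ s_n})^4=k^2\circ s_n$, and $((\omega_1\circ s_n)/\omega_1)^2=\omega_1^2(n\tau)/\omega_1^2(\tau)$, the problem reduces to proving that $k^2(\tau)$, $k^2(n\tau)$, and $\omega_1^2(n\tau)/\omega_1^2(\tau)$ all lie in $\overline{\mathbb{Q}(j)}$.

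Next, by Corollary \ref{elliptic modulus are modular}, these three functions are modular functions of level $4n$. To apply Lemma \ref{rational Fourier coefficients} (with $4n$ in place of $n$), I must verify that their $q$-expansions at $\infty$ have rational coefficients. This follows from the well-known integer $q$-expansions of $\vartheta_2^4$ and $\vartheta_3^4$, whose coefficients count (respectively) representations of positive integers as ordered sums of squares of four odd integers or of four integers. Concretely, $\vartheta_3^4(0,\tau)=1+8q+24q^2+\cdots\in\mathbb{Z}[[q]]$ and $\vartheta_2^4(0,\tau)=16q+64q^3+\cdots\in\mathbb{Z}[[q]]$, so the ratios $k^2=\vartheta_2^4/\vartheta_3^4$ and $\omega_1^2(n\tau)/\omega_1^2(\tau)=\vartheta_3^4(0,n\tau)/\vartheta_3^4(0,\tau)$, and their pullbacks by $s_n$, all have rational $q$-series at the cusp $\infty$. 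Lemma \ref{rational Fourier coefficients} then places each of the three functions in $\mathbb{Q}(j,j_{4n})$.

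Finally, Lemma \ref{modular equations} furnishes a nonzero polynomial $\Phi_{4n}\in\mathbb{Q}[X,Y]$ with $\Phi_{4n}(j,j_{4n})=0$, so $j_{4n}$ is algebraic over $\mathbb{Q}(j)$ and $\mathbb{Q}(j,j_{4n})\subset\overline{\mathbb{Q}(j)}$. Combining this with the previous step and taking square or fourth roots (which preserves algebraicity), the three generators $\sqrt{k(\tau)}$, $\sqrt{k(n\tau)}$, $\omega_1(n\tau)/\omega_1(\tau)$ are algebraic over $\mathbb{Q}(j)$, so the whole ring, and hence every coefficient of $f_{n,\tau}$, lies in $\overline{\mathbb{Q}(j)}$. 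The only delicate point in the plan is the first verification: one must square the Jacobi modulus (dealing with $k^2$ rather than $k$) so that the resulting expansion is genuinely a power series in $q$ rather than $q^{1/4}$; this is exactly what makes the functions modular for $\Gamma_0(4n)$ (so that $\tau\mapsto\tau+1$-invariance holds) and puts them in the scope of Lemmas \ref{rational Fourier coefficients} and \ref{modular equations}. Everything else is a direct application of the preliminaries.
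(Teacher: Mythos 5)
Your proposal is correct and follows essentially the same route as the paper's own proof: reduce to the generators $\sqrt{k}$, $\sqrt{k\circ s_n}$, $(\omega_1\circ s_n)/\omega_1$ via Theorem \ref{CB-product coefficients in field of modulus}, pass to their squares/fourth powers, invoke Corollary \ref{elliptic modulus are modular} and Lemma \ref{rational Fourier coefficients} to land in $\mathbb{Q}(j,j_{4n})$, and finish with Lemma \ref{modular equations}. The only difference is that you spell out the rationality of the $q$-expansions in slightly more detail than the paper does.
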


\begin{proof}
Since the coefficients of the Fourier expansions at $\infty$ of $\vartheta_3^4(0,n\tau)$ and $\vartheta_2^4(0,n\tau)$ are rational, those of $k^2(\tau)$, $k^2(n\tau)$ and $\frac{\omega_1^2(n\tau)}{\omega_1^2(\tau)}$ are also rational. Hence by Corollary \ref{elliptic modulus are modular} and Lemma \ref{rational Fourier coefficients}, 
$$k^2(\tau), k^2(n\tau), \frac{\omega_1^2(n\tau)}{\omega_1^2(\tau)}\in \mathbb{Q}(j,j_{4n}),$$
where $j_{4n}(\tau)=j(4n\tau)$.
By Lemma \ref{modular equations}, there exists nonconstant polynomial $\Phi_{4n}\in \mathbb{Q}[X,Y]$ such that $$\Phi_{4n}(j,j_{4n})=0.$$ This shows that $j_{4n}\in \overline{\mathbb{Q}(j)}$, so $\mathbb{Q}(j,j_{4n})\subseteq \overline{\mathbb{Q}(j)}$. Then 
$$\sqrt{k(\tau)}, \sqrt{k(n\tau)}, \frac{\omega_1(n\tau)}{\omega_1(\tau)}\in \overline{\mathbb{Q}(j)}.$$
 Hence by Theorem \ref{CB-product coefficients in field of modulus}, the coefficients of $f_{n,\tau}$ are in $\overline{\mathbb{Q}(j)}$.
\end{proof}

\begin{theorem}
For each positive integer $n$, the coefficients of the Chebyshev-Blaschke product $f_{n,\tau}$ are in $Frac(\mathbb{Z}[[q^{1/4}]])$, the field of fraction of the ring of power series in $q^{1/4}$ over $\mathbb{Z}$. By multiplying both the numerator and the denominator of $f_{n,\tau}$ by a suitable element in $\mathbb{Z}[[q^{1/4}]]$, the denominators of the coefficients are cleared out and hence $f_{n,\tau}$ is defined over $\mathbb{Z}[[q^{1/4}]]$.
\end{theorem}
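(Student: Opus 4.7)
The plan is to show that each of the three generators $\sqrt{k(\tau)}$, $\sqrt{k(n\tau)}$, and $\omega_1(n\tau)/\omega_1(\tau)$ appearing in Theorem \ref{CB-product coefficients in field of modulus} already lies in $\mathbb{Z}[[q^{1/4}]]$. Once that is established, the containment
\[
\mathbb{Z}\left[\sqrt{k},\sqrt{k\circ s_n},\frac{\omega_1\circ s_n}{\omega_1}\right]\subseteq\mathbb{Z}[[q^{1/4}]]\subseteq \mathrm{Frac}(\mathbb{Z}[[q^{1/4}]])
\]
is automatic, and both claims of the theorem follow immediately from the integral version of Theorem \ref{CB-product coefficients in field of modulus}, obtained there by multiplying numerator and denominator of $f_{n,\tau}$ by the product of the denominators of the $S_{n,j}$.

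First I would read off directly from the series definitions in Section \ref{Jacobi theta functions and Jacobi elliptic functions} the $q$-expansions
\[
\vartheta_3(0,\tau)=1+2\sum_{m\geq 1}q^{m^2}\in 1+2q\,\mathbb{Z}[[q]], \qquad \vartheta_2(0,\tau)=2q^{1/4}\sum_{m\geq 0}q^{m(m+1)}\in 2q^{1/4}\mathbb{Z}[[q]].
\]
The crucial observation is that the constant Fourier coefficient of $\vartheta_3(0,\tau)$ equals $1$, so $\vartheta_3(0,\tau)$ is a \emph{unit} in the formal power series ring $\mathbb{Z}[[q]]\subseteq\mathbb{Z}[[q^{1/4}]]$. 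Consequently $\sqrt{k(\tau)}=\vartheta_2(0,\tau)/\vartheta_3(0,\tau)$ lies in $\mathbb{Z}[[q^{1/4}]]$, and performing the substitution $\tau\mapsto n\tau$, which sends $q$ to $q^n$, in the same expansions shows $\sqrt{k(n\tau)}\in\mathbb{Z}[[q^{1/4}]]$ as well.

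For the third generator, $\omega_1(\tau)=\vartheta_3^2(0,\tau)\in 1+4q\,\mathbb{Z}[[q]]$ is again a unit in $\mathbb{Z}[[q]]$, and therefore
\[
\frac{\omega_1(n\tau)}{\omega_1(\tau)}=\frac{\vartheta_3^2(0,n\tau)}{\vartheta_3^2(0,\tau)}\in\mathbb{Z}[[q]]\subseteq\mathbb{Z}[[q^{1/4}]].
\]
Assembling the three memberships yields the desired ring inclusion, and invoking Theorem \ref{CB-product coefficients in field of modulus} then establishes both that the coefficients of $f_{n,\tau}$ lie in $\mathrm{Frac}(\mathbb{Z}[[q^{1/4}]])$ and, after clearing denominators by the same product used in that theorem, that $f_{n,\tau}$ itself is defined over $\mathbb{Z}[[q^{1/4}]]$.

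There is essentially no analytic obstacle; the argument is little more than bookkeeping with $q$-expansions. The only point requiring genuine care is verifying that $\vartheta_3(0,\tau)$, and hence $\omega_1(\tau)$, has constant term equal to $1$, since this is precisely what guarantees that inversion stays inside $\mathbb{Z}[[q^{1/4}]]$ rather than merely inside its fraction field. This single observation is what promotes the weaker ``fraction-field'' statement to the stronger ``integral'' one.
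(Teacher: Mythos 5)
Your proof is correct and follows essentially the same route as the paper: both reduce to Theorem \ref{CB-product coefficients in field of modulus} together with the integrality of the $q^{1/4}$-expansions of $\vartheta_2(0,\tau)$ and $\vartheta_3(0,\tau)$. Your observation that $\vartheta_3(0,\tau)$ has constant term $1$ and is therefore a unit in $\mathbb{Z}[[q]]$ is a mild sharpening of the paper's argument --- it gives the ring containment $\mathbb{Z}\left[\sqrt{k},\sqrt{k\circ s_n},\frac{\omega_1\circ s_n}{\omega_1}\right]\subseteq\mathbb{Z}[[q^{1/4}]]$ directly, whereas the paper only records the field containment $\mathbb{Q}\left(\sqrt{k},\sqrt{k\circ s_n},\frac{\omega_1\circ s_n}{\omega_1}\right)\subseteq \mathrm{Frac}(\mathbb{Z}[[q^{1/4}]])$ and leaves the denominator-clearing step implicit.
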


\begin{proof}
By abuse of notations, we use $\sqrt{k(q)}$ and $\omega_1(q)$ to denote the $q$-expansions of $\sqrt{k(\tau)}$ and $\omega_1(\tau)$ respectively. Since 
$$
\begin{array}{ll}
\mathbb{Q}\left(\sqrt{k(q)}, \sqrt{k(q^n)}, \frac{\omega_1(q^n)}{\omega_1(q)}\right)&\subseteq \mathbb{Q}(\vartheta_2(0,q),\vartheta_3(0,q),\vartheta_2(0,q^n),\vartheta_3(0,q^n))\\
&\subseteq Frac(\mathbb{Z}[[q^{1/4}]]),
\end{array}
$$
by Theorem \ref{CB-product coefficients in field of modulus}, the coefficients of $f_{n,\tau}$ are in $Frac(\mathbb{Z}[[q^{1/4}]])$.
\end{proof}

\begin{remark}
It would be interesting to find more examples of other family of Shabat-Blaschke products that are defined over a finite extension of $Frac(\mathbb{Z}[[q^{1/4}]])$, or defined over a finite extension of $\mathbb{Q}\left(\sqrt{k}, \sqrt{k\circ s_n}, \frac{\omega_1\circ s_n}{\omega_1}\right)$, or defined over $\overline{\mathbb{Q}(j)}$. One would also like to see if there is a deformation of the Belyi's theorem formulated using the above fields.\\
\end{remark}

\section{Landen-type identities for theta functions}

In this section, we will obtain some Landen-type identities for theta functions, which will degenerate to some trigonometric identities.\\

From \eqref{coefficients in terms of theta}, we know that for each positive integer $n\geq 2$, and $j=1,\dots, \lfloor n/2\rfloor$,  $S_{n,j}$ can be expressed in terms of $$\vartheta_2((2i-1)\pi/2n,\tau) \quad \text{ and } \quad \vartheta_3((2i-1)\pi/2n,\tau),$$ where $1\leq i\leq \lfloor n/2\rfloor$. On the other hand, we know from Theorem \ref{CB-product coefficients in field of modulus} that $S_{n,j}$ can be expressed in terms of 
$$\vartheta_2(0,\tau), \vartheta_3(0,\tau),\vartheta_2(0,n\tau), \vartheta_3(0,n\tau).$$
Therefore, for each positive integer $n\geq 2$, and each $j=1,\dots, \lfloor n/2\rfloor$, we have a theta identity relating those theta functions. For example, when $n$ is even and $j=\lfloor n/2\rfloor$, we have 
$$\prod_{1\leq i\leq \lfloor n/2\rfloor} \frac{\vartheta_2^2((2i-1)\pi/2n,\tau)}{\vartheta_3^2((2i-1)\pi/2n,\tau)}=\frac{\vartheta_2(0,n\tau)}{\vartheta_3(0,n\tau)}$$
which coincides with the Landen transformation of even order $n$ evaluated at $z=0$ \cite[p. 23, 253-254, 259]{Lawden}.  When $n$ is odd and $j=\lfloor n/2\rfloor$, we have
$$\prod_{1\leq i\leq \lfloor n/2\rfloor} \frac{\vartheta_2^2((2i-1)\pi/2n,\tau)}{\vartheta_3^2((2i-1)\pi/2n,\tau)}=\frac{n\vartheta_2(0,n\tau)\vartheta_3(0,n\tau)}{\vartheta_2(0,\tau)\vartheta_3(0,\tau)}$$
which coincides with the Landen transformation of odd order $n$ evaluated at $z=0$ \cite[p. 253-256]{Lawden}. However, we also get other theta identities in which the left hand sides are other symmetric polynomials in $$ \frac{\vartheta_2^2((2i-1)\pi/2n,\tau)}{\vartheta_3^2((2i-1)\pi/2n,\tau)}, \quad i=1,\dots, \lfloor n/2\rfloor.$$
We display some examples of theta identities when $n$ is small below. When $n=2$, we only have one identity
$$\frac{\vartheta_2^2(\pi/4,\tau)}{\vartheta_3^2(\pi/4,\tau)}=\frac{\vartheta_2(0,2\tau)}{\vartheta_3(0,2\tau)}.$$
By \eqref{cd tends to cosine}, we know that 
$$\lim_{\tau\rightarrow +i\infty}\frac{1}{k(\tau)}\frac{\vartheta_2^2(\pi/4,\tau)}{\vartheta_3^2(\pi/4,\tau)}=\cos^2\left(\frac{\pi}{4}\right).$$
By considering the Fourier expansions, we have
$$\lim_{\tau\rightarrow +i\infty}\frac{1}{k(\tau)}\frac{\vartheta_2(0,2\tau)}{\vartheta_3(0,2\tau)}=\frac{1}{2}.$$
By the above identity and limits, we get
$$\cos^2\left(\frac{\pi}{4}\right)=\frac{1}{2}.$$
When $n=3$, we again only have one identity
$$\frac{\vartheta_2^2(\pi/6,\tau)}{\vartheta_3^2(\pi/6,\tau)}=\frac{3\vartheta_2(0,3\tau)\vartheta_3(0,3\tau)}{\vartheta_2(0,\tau)\vartheta_3(0,\tau)}.$$
By multiplying $1/k(\tau)$ on both sides and taking limits, we get the trigonometric identity
$$\cos^2\left(\frac{\pi}{6}\right)=\frac{3}{4}.$$
When $n=4$, we have two theta identities,
$$\frac{\vartheta_2^2(\pi/8,\tau)}{\vartheta_3^2(\pi/8,\tau)}+\frac{\vartheta_2^2(3\pi/8,\tau)}{\vartheta_3^2(3\pi/8,\tau)}=\frac{8\vartheta_2(0,4\tau)}{\vartheta_2^2(0,\tau)\vartheta_3^2(0,\tau)}\cdot\frac{\vartheta_3^4(0,4\tau)-\vartheta_2^4(0,4\tau)}{\vartheta_3(0,4\tau)-\vartheta_2(0,4\tau)}$$
and 
$$\frac{\vartheta_2^2(\pi/8,\tau)}{\vartheta_3^2(\pi/8,\tau)}\frac{\vartheta_2^2(3\pi/8,\tau)}{\vartheta_3^2(3\pi/8,\tau)}=\frac{\vartheta_2(0,4\tau)}{\vartheta_3(0,4\tau)}.$$
By multiplying $1/k(\tau)$ on both sides of the first identity and taking limits, and by multiplying $1/k^2(\tau)$ on both sides of the second identity and taking limits, we get the trigonometric identities
$$\cos^2\left(\frac{\pi}{8}\right)+\cos^2\left(\frac{3\pi}{8}\right)=1$$
and $$\cos^2\left(\frac{\pi}{8}\right)\cos^2\left(\frac{3\pi}{8}\right)=\frac{1}{8}.$$
When $n=5$, we have two theta identities,
\begin{eqnarray}
&&\frac{\vartheta_2^2(\pi/10,\tau)}{\vartheta_3^2(\pi/10,\tau)}+\frac{\vartheta_2^2(3\pi/10,\tau)}{\vartheta_3^2(3\pi/10,\tau)}\nonumber
\\&=&\frac{5\vartheta_2(0,5\tau)\vartheta_3(0,5\tau)}{6\vartheta_2^2(0,\tau)\vartheta_3^2(0,\tau)}\cdot\frac{\vartheta_3^4(0,\tau)+\vartheta_2^4(0,\tau)-25(\vartheta_3^4(0,5\tau)+\vartheta_2^4(0,5\tau))}{5\vartheta_2(0,5\tau)\vartheta_3(0,5\tau)-\vartheta_2(0,\tau)\vartheta_3(0,\tau)}\nonumber
\end{eqnarray}
and 
$$\frac{\vartheta_2^2(\pi/10,\tau)}{\vartheta_3^2(\pi/10,\tau)}\frac{\vartheta_2^2(3\pi/10,\tau)}{\vartheta_3^2(3\pi/10,\tau)}=\frac{5\vartheta_2(0,5\tau)\vartheta_3(0,5\tau)}{\vartheta_2(0,\tau)\vartheta_3(0,\tau)}.$$
By multiplying $1/k(\tau)$ on both sides of the first identity and taking limits, and by multiplying $1/k^2(\tau)$ on both sides of the second identity and taking limits, we get the trigonometric identities
$$\cos^2\left(\frac{\pi}{10}\right)+\cos^2\left(\frac{3\pi}{10}\right)=\frac{5}{4}$$
and $$\cos^2\left(\frac{\pi}{10}\right)\cos^2\left(\frac{3\pi}{10}\right)=\frac{5}{16}.$$
When $n=6$, we have three theta identities,
\begin{eqnarray}
&&\frac{\vartheta_2^2(\pi/12,\tau)}{\vartheta_3^2(\pi/12,\tau)}+\frac{\vartheta_2^2(3\pi/12,\tau)}{\vartheta_3^2(3\pi/12,\tau)}+\frac{\vartheta_2^2(5\pi/12,\tau)}{\vartheta_3^2(5\pi/12,\tau)}\nonumber
\\&=&\frac{6\vartheta_2(0,6\tau)(\vartheta_2^2(0,6\tau)+\vartheta_3^2(0,6\tau))}{\vartheta_2^2(0,\tau)\vartheta_3^2(0,\tau)}\nonumber
\\&&\cdot\frac{3\vartheta_2^2(0,\tau)\vartheta_3^2(0,\tau)\vartheta_2(0,6\tau)-\vartheta_3(0,6\tau)[\vartheta_2^4(0,\tau)+\vartheta_3^4(0,\tau)+45\vartheta_2^4(0,6\tau)-9\vartheta_3^4(0,6\tau)]}{\vartheta_2^2(0,\tau)\vartheta_3^2(0,\tau)-18\vartheta_2(0,6\tau)\vartheta_3(0,6\tau)(\vartheta_2^2(0,6\tau)+\vartheta_3^2(0,6\tau))},\nonumber
\end{eqnarray}
\begin{eqnarray}
&&\frac{\vartheta_2^2(\pi/12,\tau)}{\vartheta_3^2(\pi/12,\tau)}\frac{\vartheta_2^2(3\pi/12,\tau)}{\vartheta_3^2(3\pi/12,\tau)}+\frac{\vartheta_2^2(\pi/12,\tau)}{\vartheta_3^2(\pi/12,\tau)}\frac{\vartheta_2^2(5\pi/12,\tau)}{\vartheta_3^2(5\pi/12,\tau)}+\frac{\vartheta_2^2(3\pi/12,\tau)}{\vartheta_3^2(3\pi/12,\tau)}\frac{\vartheta_2^2(5\pi/12,\tau)}{\vartheta_3^2(5\pi/12,\tau)}\nonumber
\\&=&\frac{6\vartheta_2(0,6\tau)(\vartheta_2^2(0,6\tau)+\vartheta_3^2(0,6\tau))}{\vartheta_2^2(0,\tau)\vartheta_3^2(0,\tau)}\nonumber
\\&&\cdot\frac{3\vartheta_2^2(0,\tau)\vartheta_3^2(0,\tau)\vartheta_3(0,6\tau)-\vartheta_2(0,6\tau)[\vartheta_2^4(0,\tau)+\vartheta_3^4(0,\tau)-9\vartheta_2^4(0,6\tau)+45\vartheta_3^4(0,6\tau)]}{\vartheta_2^2(0,\tau)\vartheta_3^2(0,\tau)-18\vartheta_2(0,6\tau)\vartheta_3(0,6\tau)(\vartheta_2^2(0,6\tau)+\vartheta_3^2(0,6\tau))},\nonumber
\end{eqnarray}
and
$$\frac{\vartheta_2^2(\pi/12,\tau)}{\vartheta_3^2(\pi/12,\tau)}\frac{\vartheta_2^2(3\pi/12,\tau)}{\vartheta_3^2(3\pi/12,\tau)}\frac{\vartheta_2^2(5\pi/12,\tau)}{\vartheta_3^2(5\pi/12,\tau)}=\frac{\vartheta_2(0,6\tau)}{\vartheta_3(0,6\tau)}.$$
By multiplying $1/k(\tau)$ on both sides of the first identity and taking limits, by multiplying $1/k^2(\tau)$ on both sides of the second identity and taking limits,  by multiplying $1/k^3(\tau)$ on both sides of the third identity and taking limits, we get the trigonometric identities
$$\cos^2\left(\frac{\pi}{12}\right)+\cos^2\left(\frac{3\pi}{12}\right)+\cos^2\left(\frac{5\pi}{12}\right)=\frac{3}{2},$$
$$\cos^2\left(\frac{\pi}{12}\right)\cos^2\left(\frac{3\pi}{12}\right)+\cos^2\left(\frac{\pi}{12}\right)\cos^2\left(\frac{5\pi}{12}\right)+\cos^2\left(\frac{3\pi}{12}\right)\cos^2\left(\frac{5\pi}{12}\right)=\frac{9}{16},$$
and
$$\cos^2\left(\frac{\pi}{12}\right)\cos^2\left(\frac{3\pi}{12}\right)\cos^2\left(\frac{5\pi}{12}\right)=\frac{1}{32}.$$
~\\

\section{Conjecture}\label{epilogue}

Finally, we list some further problems that one may try to study.
Suppose $\rho$ is a tree monodromy representation that gives rise to Shabat-Blaschke products of degree $n$ with exactly two critical values in $\mathbb{D}$. For each fixed $\tau\in\mathbb{R}_{>0}i$, motivated by the critical values of the Chebyshev-Blaschke products, let $B_{\tau}:\mathbb{D}\rightarrow\mathbb{D}$ be the Shabat-Blaschke product associated to $\rho$ whose critical values are $-\sqrt{k(n\tau)}$ and $\sqrt{k(n\tau)}$.  Let $s_n(\tau)=n\tau$ for all $\tau\in\mathbb{H}$.
We have the following conjecture:
\begin{conjecture}
There exists $a_1,\dots, a_n,b_1,\dots, b_m$ in 
$$\overline{\mathbb{Q}\left(\sqrt{k},\sqrt{k\circ s_n}, \frac{\omega_1\circ s_n}{\omega_1}\right)}\subseteq \mathcal{M}(\mathbb{H})$$
(where $\mathcal{M}(\mathbb{H})$ is the field of meromorphic functions on the upper half-plane $\mathbb{H}$) such that 
\begin{itemize}
\item for any $\tau\in \mathbb{R}_{>0}i$, $a_0(\tau),\dots, a_n(\tau), b_0(\tau),\dots, b_m(\tau)\in \mathbb{C}$.
\item for any $\tau\in\mathbb{R}_{>0}i$ and $z\in \mathbb{D}$, $$B_{\tau}(z)=\frac{a_n(\tau)z^n+a_{n-1}(\tau)z^{n-1}+\cdots +a_0(\tau)}{b_m(\tau)z^m+b_{m-1}(\tau)z^{m-1}+\cdots +b_0(\tau)}.$$ 
\end{itemize}
\end{conjecture}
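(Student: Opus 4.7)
The plan is to adapt the strategy of Section 9 (Theorem \ref{CB-product coefficients in field of modulus}) to arbitrary tree monodromies, replacing explicit theta-function formulas with the rigidity supplied by Theorems \ref{hyperbolic Belyi and transitive monodromy} and \ref{Shabat-Blaschke products and trees}. First, fix a normalization of $B_\tau$: by Theorem \ref{Shabat-Blaschke products and trees}, $B_\tau$ is determined by $\tau$ up to pre-composition with any $\varphi\in\mathrm{Aut}(\mathbb{D})$ and post-composition with an element of $\mathrm{Aut}(\mathbb{D})$ stabilizing $\{\pm\sqrt{k(n\tau)}\}$ (a $\mathbb{Z}/2$-ambiguity). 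Imposing $B_\tau(0)=0$ together with a positivity condition on the first nonvanishing derivative at $0$ kills the $\mathrm{Aut}(\mathbb{D})$-freedom and pins down $B_\tau$ uniquely (up to the discrete $\mathbb{Z}/2$) for each $\tau\in\mathbb{R}_{>0}i$.

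Next, write $B_\tau=P/Q$ with $P,Q$ coprime of degrees $\leq n$, let $(e_i^+)_{i=1}^{c_1}$ and $(e_i^-)_{i=1}^{c_2}$ be the cycle types of $\rho(g_1)$ and $\rho(g_2)$ (so $c_1+c_2=n+1$), and set $c:=\sqrt{k(n\tau)}\in K$. The hypothesis that $B_\tau$ is a Shabat-Blaschke product with critical values $\pm c$ and monodromy $\rho$ translates into the polynomial identities
\begin{equation*}
P(z)-cQ(z)=\lambda_+\prod_{i=1}^{c_1}(z-\alpha_i^+)^{e_i^+},\qquad P(z)+cQ(z)=\lambda_-\prod_{i=1}^{c_2}(z-\alpha_i^-)^{e_i^-},
\end{equation*}
coupled with the finite-Blaschke constraint (for $\tau\in\mathbb{R}_{>0}i$, the reciprocal relation $Q(z)=z^nP(1/z)$ together with reality of $P$, both algebraic over $\mathbb{R}$). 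This gives a polynomial system in $(a_i,\alpha_i^\pm,\lambda_\pm)$ with coefficients in $\mathbb{Q}(c)\subseteq K$. Since the normalized solution is unique (up to $\mathbb{Z}/2$) by Theorem \ref{Shabat-Blaschke products and trees}, the system is $0$-dimensional, so every unknown, in particular every coefficient of $P$ and $Q$, is algebraic over $\mathbb{Q}(c)\subseteq K$ and hence lies in $\overline{K}$. The first bullet then follows from the fact that the normalized $B_\tau$ exists for every $\tau\in\mathbb{R}_{>0}i$ by Theorem \ref{hyperbolic Belyi and transitive monodromy}, so the normalized coefficients are finite on $\mathbb{R}_{>0}i$.

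The hard part will be making the $0$-dimensionality of this system fully rigorous. The Blaschke condition is algebraic only after a reality specialization; over $\overline{\mathbb{C}(c)}$ the factorization identities alone may admit extraneous non-Blaschke solutions that could threaten the algebraicity of the physical Blaschke solution over $\overline{K}$. One promising workaround is to derive a nonlinear ODE for $B_\tau$ analogous to \eqref{nonlinear differential equation}: pulling back the double cover of $\mathbb{D}$ branched at $\pm c$ through $B_\tau$ should produce an algebraic relation $B_\tau'(z)^2=R(z,B_\tau(z))$ with $R\in K(z,w)$, after which a Taylor-coefficient recurrence in the spirit of \eqref{recurrence1} would inductively place each $B_\tau^{(k)}(0)$ into $\overline{K}$, circumventing the Blaschke subtlety entirely. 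A more structural alternative would be to realize the moduli of normalized Shabat-Blaschke products with monodromy $\rho$ as a finite $\mathbb{Z}[c]$-scheme, embodying the hyperbolic Belyi theorem envisioned in Section \ref{introduction}.
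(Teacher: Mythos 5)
You are attempting to prove a statement that the paper explicitly leaves as a conjecture; there is no proof in the paper to compare against, and your proposal, as written, does not close it either. The decisive gap is the one you flag yourself, and it is not a technicality --- it is essentially the entire content of the conjecture. The two factorization identities $P\mp cQ=\lambda_\pm\prod(z-\alpha_i^\pm)^{e_i^\pm}$ only prescribe the branching of $B_\tau$ over $\pm c$; since $\deg R_{B_\tau}=2n-2$ as a rational map on $\widehat{\mathbb{C}}$ while the branching over $\pm c$ accounts for only $n-1$ of it, the remaining $n-1$ critical points are unconstrained by your algebraic system, which is therefore positive-dimensional over $\overline{\mathbb{Q}(c)}$ even after quotienting by $\mathrm{PGL}_2(\mathbb{C})$ pre-composition. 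What pins those points down is the Blaschke constraint $Q(z)=z^n\overline{P(1/\bar z)}$, and this involves complex conjugation of the coefficients, which is not an algebraic condition over $\mathbb{C}$; for the Chebyshev products the coefficients happen to be real on $\mathbb{R}_{>0}i$, but for a general tree monodromy there is no reason for this, so ``reality specialization'' does not rescue it. Likewise your normalization ($B_\tau(0)=0$ plus positivity of a derivative) is real-analytic, not algebraic: it slices out the real form $\mathrm{Aut}(\mathbb{D})$ inside $\mathrm{PGL}_2(\mathbb{C})$, so the ``unique normalized solution'' is not an isolated point of an algebraic variety defined over $\overline{\mathbb{Q}(c)}$, and the Cramer-type conclusion that every unknown is algebraic does not follow. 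The question of whether the $\mathrm{PGL}_2(\mathbb{C})$-coset representative needed to turn an algebraically normalized cover into an honest Blaschke product is algebraic over $K$ is exactly what the conjecture asks; the paper answers it for Chebyshev only via the explicit theta-function uniformization.

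Your proposed ODE workaround also does not obviously go through. The relation $B_\tau'(z)^2=R(z,B_\tau(z))$ for a general Shabat-Blaschke product has the critical points $\alpha_i^\pm$ and their multiplicities built into $R$, and these are among the very unknowns whose algebraicity over $K$ you are trying to establish; in the Chebyshev case the coefficients of \eqref{nonlinear differential equation} lie in $K$ only because all critical points are simple and the equation can be written purely in terms of $k$, $k\circ s_n$, $\omega_1\circ s_n/\omega_1$. So the Taylor-coefficient recurrence at $0$ cannot be bootstrapped over $K$ for a general tree. Finally, even granting pointwise algebraicity of the coefficients over $\mathbb{Q}(c(\tau))$ for each fixed $\tau$, the conjecture asserts algebraicity over the function field $K\subset\mathcal{M}(\mathbb{H})$, which requires a single polynomial relation uniform in $\tau$; this last step is repairable (finiteness of branches plus the identity theorem, as in Lemma \ref{power series coefficients and symmetric functions of zeros}), but it must be argued. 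Your broad strategy --- rigidity of the cover plus descent to $\overline{\mathbb{Q}(c)}$ --- is a sensible line of attack and is in the spirit of what the authors seem to hope for, but the proposal is a program, not a proof.
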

One can also have weaker conjectures by replacing the algebraic closure in the above conjecture by $\overline{\mathbb{Q}(j)}$ or $\overline{Frac(\mathbb{Z}[[q^{1/4}]])}$. \\

In a paper \cite{Maskit} by Maskit, there is a way to embed a topologically finite Riemann surface into a compact Riemann surface. One may try to use such embedding to formulate a deformation of Belyi's theorem.\\

Another version of the Belyi's theorem says that a compact Riemann surface $X$ admits a Belyi map if and only if $X$ can be uniformized by a finite index subgroup of a Fuchsian triangle group \cite[p. 71]{Jones}.  One may try to formulate a deformation of this version of Belyi's theorem.

\begin{bibdiv}
\begin{biblist}

\bib{BM07}{article} {author={A. F. Beardon}, author={D. Minda}, title={The hyperbolic metric and geometric function theory} journal={Proc. of the International Workshop on Quasiconformal Mappings and their Applications, Narosa, New Delhi}, date={2007}, pages={9--56}}

\bib{Belyi}{article} {author={G. V. Belyi}, title={On Galois extensions of a maximal cyclotomic field},  journal={Izv. Akad. Nauk SSSR Ser. Mat.}, volume={43(2)}, date={1979},  pages={267--276}}

\bib{Conway}{book} {author={J. B. Conway}, title={Functions of one complex variable II}, publisher={Springer-Verlag}, date={1995}}

\bib{Cox}{book} {author={D. A. Cox}, title={Primes of the form $x^2+ny^2$: Fermat, class field theory, and complex multiplication}, publisher={Wiley}, date={1997}}

\bib{Diamond}{book} {author={ F. Diamond},  author={J. Shurman}, title={A first course in modular forms}, publisher={Springer}, date={2005}}

\bib{Forster}{book} {author={O. Forster}, title={Lectures on Riemann surfaces},  publisher={Springer-Verlag}, date={1981}}

\bib{Freitag}{book} {author={E. Freitag}, author={R. Busam}, title={Complex analysis}, publisher={Springer},  date={2009}}

\bib{Girondo}{book} {author={E. Girondo}, author={G. Gonz\'{a}lez-Diez}, title={Introduction to compact Riemann surfaces and dessins d'enfants}, publisher={Cambridge University Press}, date={2012}}

\bib{Grothendieck}{article} {author={A. Grothendieck}, title={Esquisse d'un programme}, date={1984}, publisher={manuscript}}

\bib{Hatcher}{book} {author={A. Hatcher}, title={Algebraic Topology}, publisher={Cambridge University Press}, date={2002}}

\bib{Ismail}{article} {author={M. E. H. Ismail}, author={C. Zhang}, title={Zeros of entire functions and a problem of Ramanujan}, journal={Adv. Math.}, volume={209}, date={2007}, pages={363--380}}

\bib{Jones}{book} {author={G. A. Jones},  author={J. Wolfart}, title={Dessins d'enfants on Riemann surfaces}, publisher={Springer},  date={2016}}

\bib{Zvonkin}{book} {author={S. K. Lando},  author={A. K. Zvonkin}, title={Graphs on surfaces and their applications}, publisher={Springer}, date={2004}}

\bib{Lawden}{book} {author={D. F. Lawden}, title={Elliptic functions and applications}, publisher={Springer-Verlag}, date={1989}}

\bib{Lehto}{book} {author={O. Lehto. K. I}, author={Virtanen}, title={Quasiconformal mappings in the plane}, publisher={Springer-Verlag}, date={1973}}

\bib{Lutovac}{book} {author={M. D. Lutovac}, author={D. V. Tosic}, author={B. L. Evans}, title={Filter Design for Signal Processing
Using MATLAB and Mathematica}, publisher={Prentice Hall}, date={2001}}

\bib{Maskit}{article} {author={B. Maskit}, title={Canonical domains on Riemann surfaces}, journal={Proc. Amer. Math. Soc.} volume={106(3)}, date={1989}, pages={713--721}}

\bib{Tsang}{article} {author={T. W. Ng}, author={C. Y. Tsang}, title={Chebyshev-Blaschke products: solutions to certain approximation problems and differential equations}, journal={J. Comp. and App. Math.} volume={277}, date={2015},  pages={106--114}}

\bib{Tsang1}{article} {author={T. W. Ng}, author={C. Y. Tsang}, title={Polynomials versus finite Blaschke products, in Blaschke products and their applications}, journal={Fields institute communications}, volume={65}, date={2013}, pages={249--273}}

\bib{WangNg}{article} {author={T. W. Ng}, author={M. X. Wang},  title={Ritt's theory on the unit disk},  journal={Forum Mathematicum}, volume={25(4)} date={2013}, pages={821--851}}

\bib{Remmert}{book} {author={R. Remmert}, title={Classical topics in complex function theory}, publisher={Springer-Verlag}, date={1998}}

\bib{Big rudin}{book} {author={W. Rudin}, title={Real and complex analysis}, publisher={McGraw-Hill},  date={1970}}

\bib{Schneps}{article} {author={L. Schneps}, title={Dessins d'enfants on the Riemann sphere, in The Grothendieck theory of dessins d'enfants.} journal={London Math. Soc. Lecture Notes Series},  volume={200}, pages={47--77}, date={1994}}

\bib{Shabat}{book} {author={G.B. Shabat}, author={V. Voevodsky}, title={Drawing curves over number fields, in The Grothendieck Festschrift, vol. III}, publisher={Birkhauser}, date={1990}}

\bib{Wang}{article} {author={M. X. Wang}, title={Factorizations of finite mappings on Riemann surfaces}, publisher={M. Phil thesis, the University of Hong Kong},  date={2008}}

\bib{Whittaker}{book} {author={E. T. Whittaker}, author={G. N. Watson}, title={A course of modern analysis}, publisher={Cambridge University Press}, date={1927}}

\end{biblist}
\end{bibdiv}

\end{document}